\DeclareMathOperator*{\esssup}{ess\,sup}
\newtheorem{theorem}{Theorem}[section]
\newtheorem{corollary}[theorem]{Corollary}
\newtheorem{lemma}[theorem]{Lemma}
\newtheorem{definition}[theorem]{Definition}
\newtheorem{remark}[theorem]{Remark}
\numberwithin{equation}{section}
\begin{document}

\title{Extremal of Log\--Sobolev Functionals and Li-Yau Estimate on $\mathrm{RCD}^*(K,N)$ Spaces}

\date{\today}

\thanks{National Science Foundation of China, Grants Numbers: 11971310 and 11671257 are gratefully acknowledged.}

\author{Samuel Drapeau}
\address{Shanghai Jiao Tong University, School of Mathematical Sciences \& Shanghai Advanced Institute for Finance, Shanghai, China}
\email{sdrapeau@saif.sjtu.edu.cn}
\urladdr{http://www.samuel-drapeau.info}

\author{Liming Yin}
\address{Shanghai Jiao Tong University, School of Mathematical Sciences, Shanghai, China}
\email{gacktkaga@sjtu.edu.cn}

\begin{abstract}
    In this work, we study the extremal functions of the log-Sobolev functional on compact metric measure spaces satisfying the $\mathrm{RCD}^*(K,N)$ condition for $K$ in $\mathbb{R}$ and $N$ in $(2,\infty)$.
    We show the existence, regularity and positivity of non-negative extremal functions.
    Based on these results, we prove a Li-Yau type estimate for the logarithmic transform of any non-negative extremal functions of the log-Sobolev functional.
    As applications, we show a Harnack type inequality as well as lower and upper bounds for the non-negative extremal functions.
    \newline
    {Keywords:} Log-Sobolev functional; metric measure space; Li-Yau inequality; Curvature-dimension condition; extremal function. 
\end{abstract}

\maketitle
\section{Introduction}
In this work, we study the extremal functions of the following variational problem of the log-Sobolev functional 
\begin{equation}\label{eq:intro_log_sobolev_functional}
    \lambda(\alpha_1,\alpha_2):=
    \inf_{f\in\mathcal{F}}\int_{X}\left(|\nabla f|^2 +\alpha_1 f^2 - \frac{\alpha_2}{2} f^2 \log f^2\right)dm 
\end{equation}
where $\alpha_1$ and $\alpha_2$ are in $\mathbb{R}$ and $\mathcal{F}:=\{f\in W^{1,2}: \|f\|_{L^2}=1\}$.
Of interest are the existence, regularity, positivity of non-negative extremal functions, and analytic results such as Li-Yau or Harnack type estimates.
While those results are well-known in the smooth Riemannian setting, it seems natural to ask whether they can be extended and in which form to more general non-smooth metric spaces.

While the log-Sobolev inequality has vast applications in different branches of math\-e\-mat\-ics---see \citet*{gross1975logarithmic,otto2000generalization, bakry2014analysis}---studying its extremal functions is important on its own right. 
For example, \citet*{zhang2012extremal} shows that in the noncompact smooth manifold, the geometry of the manifold at infinity will affect the existence of extremal of the log-Sobolev functional and Perelman's $W$\--entropy.
Using these points, the author further shows that under the mild assumptions, noncompact shrinking breathers of Ricci flow are gradient shrinking solitons.
Very recently, the extremal functions of the log-Sobolev inequality are used together with the needle decomposition technique by \citet*{ohta2020equality} to show some rigidity result of the underlying weighted Riemannian manifold. 

From the viewpoint of the underlying space, starting with the works of \citet*{sturm2006geometry,sturm2006geometry2} and \citet*{lott2009ricci}, the synthetic notion of Ricci curvature---referred to as $\mathrm{CD}(K,N)$ condition---bounded from below by $K$ in $\mathbb{R}$ and the dimension bounded from above by $1\leq N\leq \infty$ on a general metric measure space without having a smooth structure was introduced and developed greatly in the last decade.
The key property of this notion is that it is compatible with the smooth Riemannian setting and stable with respect to the measured Gromov\--Hausdorff convergence so that it includes Ricci limit spaces and Alexandrov spaces.
Later, to rule out the Finsler geometry, the finer $\mathrm{RCD}(K,\infty)$ condition was introduced by \citet*{ambrosio2014metric} and the finite dimensional counterpart $\mathrm{RCD}^*(K,N)$ condition was introduced and studied in \cite{erbar2015equivalence,ambrosio2019nonlinear,ambrosio2016bakry}.
Recently, building upon the abstract module theory, the first and second order differential structure on $\mathrm{RCD}(K,\infty)$ spaces was developed by \citet*{gigli2018nonsmooth}, and finer geometric results such as the rectifiability of $\mathrm{RCD}(K,N)$ spaces were studied by \citet*{mondino2019structure}. 

With these analytic tools, the geometric analysis on metric measure spaces satisfying the synthetic Ricci curvature condition also developed quickly.
For instance, Li\--Yau\--Hamilton type inequalities for the heat flow \cite{garofalo2014li,jiang2015li_yau,jiang2016hamilton} and the localized gradient and a local Li\--Yau estimate for the heat equation \cite{huang2020localized,zhang2016local}.
In particular, \citet*{zhang2016local} develop an Omori\--Yau type maximum principle on the $\mathrm{RCD}^*(K,N)$ space and use it to show a pointwise Li-Yau type estimate for locally weak solutions of the heat equation which may not have the semigroup property.

Motivated by these works, we study the extremal functions of the log-Sobolev functional \eqref{eq:intro_log_sobolev_functional} on more general metric measure spaces.
In particular, we are interested in whether analytic results such as Li\--Yau type estimates for the non-negative extremal functions of the log-Sobolev functional holding on smooth Riemannian manifolds can be extended to non-smooth metric measure spaces, in particular, those satisfying the synthetic Ricci curvature condition.

To do so, one of the key points is to show the existence, boundedness, regularity and positivity of the non-negative extremal functions of the log-Sobolev functional.
Our first main result, Theorem \ref{thm:var_general}, states that the log-Sobolev functional \eqref{eq:intro_log_sobolev_functional} with $\alpha_2>0$ on a compact metric measure space satisfying the $\mathrm{RCD}^*(K,N)$ condition with $K$ in $\mathbb{R}$ and $N$ in $(2,\infty)$ admits non-negative extremal functions which satisfy certain Euler-Lagrange equation.
Moreover, we show that all the non-negative extremal functions are bounded, Lipschitz continuous and bounded away from 0.

We remark that while the existence and Euler-Lagrange equation problems are quite standard and similar to the smooth compact cases solved by \citet*{rothaus1981logarithmic}, several problems arise on metric measure spaces.
For instance, the positivity of the non-negative extremal functions \cite[page 114]{rothaus1981logarithmic} is shown by relying heavily on the underlying smooth differential structure of the Riemannian manifold, the polar coordinates and the exact asymptotic volume ratio near the pole so that the problem can be reduced to a one-dimensional ODE problem.
However, these smooth structures are lost on general metric measure spaces.
While for $\mathrm{RCD}^*(K,N)$ spaces, the polar decomposition still works by using the ``needle decomposition'' generalized to essentially non-branching $\mathrm{CD}(K,N)$ spaces by \citet*{cavalletti2017sharp_isomperimetric} (see also \cite{cavalletti2012local}), the similar asymptotic volume ratio analysis seems to fail without further assumptions on the underlying metric measure space.
To overcome this difficulty, we make use of a maximum principle type argument for the De Giorgi class on some local domain proved by \citet*{kinnunen2001regularity}, to show that non-negative extremal functions are either bounded away from $0$ or vanish on the whole space.
This method works in very general metric measure spaces supporting the local doubling property and weak Poincaré inequality, which is in particular the case for $\mathrm{RCD}^*(K,N)$ spaces.

Our second main result is Theorem \ref{thm:estimate_Li_Yau}.
Based on the regularity and positivity results obtained above, we recover a Li-Yau type estimate for the logarithmic transform of all non-negative extremal functions of \eqref{eq:intro_log_sobolev_functional}.
More precisely, for any non-negative extremal functions $u$, it holds that
\begin{equation}\label{eq:intro_2}
    |\nabla v|^2 + (\alpha_2 -\beta K)v
    \leq
    \frac{N\alpha_2(1-\beta)}{4\beta}\left(1-\frac{\beta((2-\beta)K-\alpha_2)}{2\alpha_2(1-\beta)}\right)^2\quad \text{$m$-a.e.,}
\end{equation}
for any $\beta$ in $(0,1)$ and $v=\log u+(\lambda-\alpha_1)/\alpha_2$.
The same estimate for the smooth Riemannian case was shown by \citet*{wang1999harnack}, where the argumentation relies on the pointwise Bochner formula and a pointwise characterization of local maximum points of the smooth function in the left-hand side of \eqref{eq:intro_2}.
However, in the $\mathrm{RCD}$ setting, neither the function in the left-hand side of \eqref{eq:intro_2} is smooth and pointwise defined, nor the pointwise Bochner formula is available.
To overcome the difficulty, we follow a similar argument as in \cite{zhang2016local} by using an Omori\--Yau type maximum principle.
Note that, to avoid the sign problem of $|\nabla v|^2 +(\alpha_2-\beta K)v$, we use a different auxiliary function $\phi$ from those in \cite[Theorem 1.4]{zhang2016local}, which are constructed from the distance function so that they have the measure-valued Laplacian.
Our construction is based on the ``good'' cut-off functions from \citet*[Lemma 3.1]{mondino2019structure}, which are smoother than those in \cite{zhang2016local} and has the $L^2$-valued Laplacian.
Furthermore, for our purpose, we slightly extend the Omori\--Yau maximum principle in \cite{zhang2016local}, which holds on $\mathrm{RCD}^*(K,N)$ spaces with $K\in \mathbb{R}$ and $N\geq 1$, to proper $\mathrm{RCD}(K,\infty)$ spaces.
While most arguments are similar, our proof follows the so-called ``Sobolev-to-Lip'' property, a property shared by all $\mathrm{RCD}$ spaces, rather than the ``weak maximum principle'' from \cite{zhang2016local}.

Finally, we provide some applications of the regularity and positivity results and the Li\--Yau type estimate.
We show a Harnack type estimate as well as lower and upper bounds of the non-negative extremal functions of \eqref{eq:intro_log_sobolev_functional} depending only on the geometry of the space. 
These generalize the results in \cite{wang1999harnack} proved in the Riemannian setting.
Using the weak Bochner inequality, we also show that all non-negative extremal functions are constant when $0< \alpha_2 \leq KN/(N-1)$, which is well-known in the smooth setting.

The paper is organized as follows: in Section 2, we introduce the notations and definitions about metric measure spaces and $\mathrm{RCD}$ conditions as well as the analytic results needed later.
In Section 3, we study the variational problem and show the existence, regularity and positivity of non-negative extremal functions of \eqref{eq:intro_log_sobolev_functional}.
Section 4 is dedicated to the Li\--Yau type estimate for the non-negative extremal functions.
In Section 5, we present some applications of the previous results.
Finally, in an Appendix, we prove the Omori\--Yau type maximum principle for proper $\mathrm{RCD}(K,\infty)$ spaces.

\section{Preliminary and notations}
We briefly introduce the terminologies and notations related to calculus.
For more details, we refer the readers to \citep{gigli2020lectures, gigli2018nonsmooth}.

Throughout this work, we denote by $(X,d,m)$ a metric measure space where $(X,d)$ is a complete, separable and proper metric space and $m$ is a non-negative Radon measure with full support which is finite on every bounded and measurable set.
We denote by $B(x,r)$ and $\bar{B}(x,r)$ the open and closed metric balls centered at $x$ in $X$ with radius $r>0$, respectively.
By $L^p:=L^p(X,m)$ for $1\leq p\leq \infty$ we denote the standard $L^p$ spaces with $L^p$-norm $\|\cdot\|_{p}$.
By $L^p_{loc}$ we denote those measurable functions $f:X \to \mathbb{R}$ such that $f\chi_B$ is in $L^p$ for any bounded and measurable subset $B$ of $X$, where $\chi_B$ denote the indicator function of the set $B$.
Let further $\mathrm{Lip}:=\mathrm{Lip}(X)$, $\mathrm{Lip}_{loc}:=\mathrm{Lip}_{loc}(X)$, and $\mathrm{Lip}_{bs}:=\mathrm{Lip}_{bs}(X)$ be the spaces of real-valued functions $f:X \to \mathbb{R}$ which are Lipschitz, locally Lipschitz, and Lipschitz with bounded support, respectively.
For $f$ in $\mathrm{Lip}_{loc}$, we denote by $\mathrm{lip}(f)$ the local Lipschitz constant, or slope, defined for any $x$ in $X$ as
\begin{equation*}
    \mathrm{lip}f(x):=\limsup_{y\rightarrow x}\frac{|f(y)-f(x)|}{d(y,x)},
\end{equation*}
if $x$ is not isolated and $\mathrm{lip}f(x)=0$ if $x$ is isolated.

\subsection{Cheeger Energy, Laplacian and Calculus Tools}
The Cheeger energy is a $L^2$\--lower\--semicontinuous and convex functional $\mathrm{Ch}:L^2\rightarrow [0,\infty]$ defined as
\begin{equation*}
    \mathrm{Ch}(f)
    :=
    \inf\left\{\liminf\frac{1}{2}\int_{X}\left(\mathrm{lip}(f_n)\right)^2dm \colon (f_n)\subseteq \mathrm{Lip}\cap L^2, \|f_n-f\|_2\rightarrow 0 \right\}.
\end{equation*}
The domain of $\mathrm{Ch}$ is a linear space denoted by $W^{1,2}:=W^{1.2}(X)$ and is called the Sobolev space.
For $f$ in $W^{1,2}$, we identify the canonical element $|\nabla f|$ called the minimal relaxed gradient as the unique element with minimal $L^2$-norm, also minimal in the $m$-a.e. sense, in the set
\begin{equation*}
    \left\{G\in L^2: G = \lim \mathrm{lip} f_n \text{ in }L^2\text{ for some } (f_n)\subseteq \mathrm{Lip} \text{ such that } f_n\rightarrow f \text{ in $L^2$}\right\},
\end{equation*}
which provides an integral representation $\mathrm{Ch}(f)=\frac{1}{2}\int_{X}|\nabla f|^2 dm$.
The Sobolev space equipped with the norm $\|f\|^2_{W^{1,2}}:=\|f\|^2_{2}+2\mathrm{Ch}(f)$ is a Banach space and is dense in $L^2$, see \cite[Proposition 4.1]{ambrosio2014calculus}.
We further denote by $W^{1,2}_{loc}:=\{f\in L^2_{loc}: \eta f\in W^{1,2} \text{ for any } \eta\in \mathrm{Lip}_{bs}\}$ the space of local Sobolev functions, and define the minimal relaxed gradient as $|\nabla f|:=|\nabla (\eta f)|$ $m$-a.e. on $\{\eta=1\}$ for $f$ in $W^{1,2}_{loc}$ where $\eta$ is in $\mathrm{Lip}_{bs}$.

We say that $(X,d,m)$ is \emph{infinitesimally Hilbertian} if the Cheeger energy is a quadratic form, or equivalently, $W^{1,2}$ is a Hilbert space.
Under these assumptions, it can be proved that for any $f$ and $g$ in $W^{1,2}$, the limit
\begin{equation*}
    \langle \nabla f, \nabla g \rangle
    :=
    \lim_{\varepsilon\rightarrow 0}\frac{|\nabla(f+\varepsilon g)|^2-|\nabla f|^2}{2\varepsilon}
\end{equation*}
exists in $L^1$ and it is a bilinear form from $W^{1,2}\times W^{1,2}$ to $L^1$, see \textcolor{blue}{\citep{ambrosio2014metric}}.
\begin{definition}\label{def:Laplacian}
    Let $(X,d,m)$ be infinitesimally Hilbertian.
    \begin{itemize}[fullwidth]
        \item \textbf{Laplacian:} We say that $f$ in $W^{1,2}$, is in the domain of the Laplacian, denoted by $D(\Delta)$, provided that there exists $h$ in $L^2$ such that
            \begin{equation}\label{eq:def_laplacian}
                -\int_{X}\langle \nabla f, \nabla g \rangle dm
                =
                \int_{X} h g dm, \quad \text{for any } g \in W^{1,2}.
            \end{equation}
            In this case, we denote $\Delta f=h$.
        \item \textbf{Measure\--Valued Laplacian:} We say that $f$ in $W^{1,2}_{loc}$ is in the domain of the measure-valued Laplacian, denoted by $D(\bm{\Delta})$, provided that there exists a signed Radon measure $\mu$ on $X$ such that,\footnote{Recall that $X$ is assumed to be proper, and therefore any bounded and closed set is compact on which Radon measures are finite.}
    \begin{equation}\label{eq:def_measured_laplacian}
        -\int_{X} \langle \nabla f, \nabla g \rangle dm
        =
        \int_{X} g d\mu, \quad \text{for any } g \in \mathrm{Lip}_{bs}.
    \end{equation}
    In this case, we denote $\bm{\Delta}f=\mu$. 
    \end{itemize}

\end{definition}
By the separating property of $\mathrm{Lip}_{bs}$ for Radon measures and the infinitesimal Hilbertian property, it's clear that both $\Delta$ and $\bm{\Delta}$ are well-defined, unique and linear operators.
Moreover, the two definitions are compatible in the following sense: on the one hand, if $f$ is in $W^{1,2}$ with $\bm{\Delta}f=\rho m$ for some $\rho$ in $L^2$, then $f$ is in $D(\Delta)$ and $\Delta f=\rho$.
On the other hand, if $f$ is in $W^{1,2}$ such that $\Delta f\in L^1$, then $f$ is in $D(\bm{\Delta})$ and $\bm{\Delta}f=(\Delta f) m$, see \citep[Proposition 6.2.13]{gigli2020lectures}.
For $f$ in the domain of $\bm{\Delta}$, we denote the Lebesgue decomposition with respect to $m$
\begin{equation}
    \bm{\Delta}f
    =
    (\bm{\Delta}^{ac}f )\cdot m + \bm{\Delta}^{s}f,
\end{equation}
where $\bm{\Delta}^{ac}f$ is the Radon\--Nikodym density and $\bm{\Delta}^sf$ is the singular part of $\bm{\Delta}f$.

For $w$ in $W^{1,2}\cap L^{\infty}$, we define the weighted Laplacian $\Delta_w$ similarly but with respect to the reference measure $m_w:=e^{w}\cdot m$ and test functions in $W^{1,2}(X,m_w)$.
For such $w$, it can be shown that $W^{1,2}$ coincides with $W^{1,2}(X,m_w)$, and the minimal relaxed gradient induced by $m_w$ coincides with the one induced by $m$, see \citep[Lemma 4.11]{ambrosio2014calculus}.
Moreover, it holds that $\Delta_w f=\Delta f+\langle \nabla w, \nabla f \rangle$, see \cite[Lemma 3.4]{gigli2020rigidity}

The following Lemma recaps the calculus rules, whose proofs can be found in \citep{gigli2020lectures, zhang2016local,gigli2013pde}.
\begin{lemma}
    Let $(X,d,m)$ be infinitesimally Hilbertian.
    Then:
    \begin{enumerate}[label=\textit{(\roman*)}]
        \item \textbf{Locality:} $|\nabla f|=|\nabla g|$ on $\{f-g=c\}$ for any $f$, $g$ in $W^{1,2}$ and constant $c$.
        \item \textbf{Chain rule:} for any $f$ in $W^{1,2}$ and Lipschitz function $\phi:\mathbb{R} \to \mathbb{R}$, it follows that
            \begin{equation*}
               |\nabla(\phi\circ f)|=|\phi'(f)||\nabla f|   
            \end{equation*}
            In particular, if $\phi$ is a contraction, then $|\nabla(\phi\circ f)|\leq |\nabla f|$.
        \item \textbf{Leibniz rule:} for any $f$, $g$ and $h$ in $W^{1,2}\cap L^{\infty}$, it follows that $fg$ is in $W^{1,2}$ and
            \begin{equation*}
                \langle \nabla(fg), \nabla h \rangle
                =
                f\langle \nabla g, \nabla h \rangle + g\langle \nabla f, \nabla h \rangle.
            \end{equation*} 
        \item \textbf{Chain rule:} for any $f$ in $D(\Delta)\cap \mathrm{Lip}_b$ and $C^2$-function $\phi:\mathbb{R}\rightarrow \mathbb{R}$, it follows that $\phi(f)$ is in $D(\Delta)$ and
            \begin{equation}\label{eq:Chain_rule}
                \Delta \phi (f)
                =
                \phi'(f) \Delta f + \phi''(f)|\nabla f|^2.
            \end{equation}
        \item \textbf{Leibniz rule:} for any $f$ and $g$ in $D(\bm{\Delta})\cap L^{\infty}$ such that $g$ is continuous and $\bm{\Delta}g$ is absolutely continuous with respect to $m$, then $fg$ is in $D(\bm{\Delta})$ and
            \begin{equation}\label{eq:Leibniz}
                \bm{\Delta}(fg)
                =
                f\bm{\Delta}g + g\bm{\Delta}f + 2\langle \nabla f, \nabla g \rangle\cdot m.
            \end{equation}
    \end{enumerate}
\end{lemma}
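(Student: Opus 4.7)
The plan is to establish the five items sequentially, with (i) and (ii) as the foundation and (iii)--(v) following by integration-by-parts arguments once the gradient calculus is in place. For (i), I would reduce to $c=0$ and let $h := f-g \in W^{1,2}$. Applying the chain rule (ii)---to be proven independently in its inequality form---to the truncation $\phi_\varepsilon(t):=(|t|-\varepsilon)^+\mathrm{sgn}(t)$ and passing $\varepsilon \to 0$ yields $|\nabla h| = 0$ $m$-a.e.\ on $\{h=0\}$. The infinitesimally Hilbertian parallelogram identity
\begin{equation*}
|\nabla(f-g)|^2 + |\nabla(f+g)|^2 = 2|\nabla f|^2 + 2|\nabla g|^2,
\end{equation*}
combined on $\{f=g\}$ with $|\nabla(f-g)|=0$ and $f+g=2f=2g$, then pins down the equality $|\nabla f|=|\nabla g|$ $m$-a.e.\ on that set.

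For (ii), I would pick $(f_n)\subset\text{Lip}$ with $f_n\to f$ in $L^2$ and $\text{lip}(f_n)\to |\nabla f|$ in $L^2$; since $\phi$ is Lipschitz, $\phi\circ f_n\to\phi\circ f$ in $L^2$ with $\text{lip}(\phi\circ f_n) \le |\phi'(f_n)|\,\text{lip}(f_n)$ at differentiability points of $\phi$, giving (after extracting a pointwise subsequence) the inequality $|\nabla(\phi\circ f)| \le |\phi'(f)||\nabla f|$. The reverse inequality for $C^1$ $\phi$ with $\phi'\neq 0$ follows by composing with $\phi^{-1}$; the general case is handled by splitting $\mathbb{R}$ into $\{\phi'=0\}$ and its complement, then invoking locality from (i) applied to $\phi\circ f$ and a suitable constant on the preimage of $\{\phi'=0\}$. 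For (iii), bilinearity and polarization of $|\nabla\cdot|^2$ reduce the identity to the pointwise relation $|\nabla(fg)|^2 = f^2|\nabla g|^2 + 2fg\langle\nabla f,\nabla g\rangle + g^2|\nabla f|^2$, which I would obtain from (ii) with $\phi(t)=t^2$ applied to $f+g$ and $f-g$.

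For (iv), item (ii) places $\phi(f)$ in $W^{1,2}\cap L^\infty$; testing against arbitrary $g\in W^{1,2}$ and using (iii) yields $\int\langle\nabla\phi(f),\nabla g\rangle\,dm = \int \phi'(f)\langle\nabla f,\nabla g\rangle\,dm$. Rewriting $\phi'(f)\langle\nabla f,\nabla g\rangle = \langle\nabla f,\nabla(\phi'(f)g)\rangle - g\phi''(f)|\nabla f|^2$ by Leibniz and noting $\phi'(f)g \in W^{1,2}$ is an admissible test function for $\Delta f$ produces the stated formula. For (v), I would test the candidate measure $f\bm{\Delta}g + g\bm{\Delta}f + 2\langle\nabla f,\nabla g\rangle m$ against arbitrary $\eta\in\text{Lip}_{bs}$; expanding $\int\langle\nabla(fg),\nabla\eta\rangle\,dm$ via (iii) into $\int f\langle\nabla g,\nabla\eta\rangle + g\langle\nabla f,\nabla\eta\rangle\,dm$, the continuity of $g$ with $\bm{\Delta}g\ll m$ is exactly what ensures that $f\eta$ can be approximated by $\text{Lip}_{bs}$ functions admissible against $\bm{\Delta}g$, closing the identity.

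The step I expect to be most delicate is the \emph{equality} half of (ii) on the set $\{\phi'\circ f = 0\}$, because the natural approximation scheme only delivers an upper bound and the reverse direction appears to require locality from (i)---yet (i) was itself proved using (ii). The resolution I would pursue is the standard two-pass strategy: first prove (ii) as an inequality, deduce (i), and then upgrade (ii) to an equality using (i) via a countable decomposition of $\mathbb{R}$ into intervals on which $\phi'$ is either bounded away from zero or on which $\phi$ is constant modulo an arbitrarily small Lipschitz perturbation.
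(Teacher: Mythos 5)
The paper does not prove this lemma; it explicitly delegates all five items to the references \citep{gigli2020lectures, zhang2016local, gigli2013pde}, so there is no internal argument against which to compare your attempt. Assessing the proposal on its own terms: the overall architecture (truncation plus triangle/parallelogram for (i), Lipschitz approximation for (ii), polarization for (iii), integration by parts for (iv)--(v)) matches the standard approach in Gigli's notes, and you correctly diagnose the (i)$\leftrightarrow$(ii) interdependence; but there are three genuine gaps.

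First, your proposed two-pass resolution does not yet close the loop. To show $|\nabla h|=0$ on $\{h=0\}$ via $\phi_\varepsilon(t)=(|t|-\varepsilon)^{+}\operatorname{sgn}(t)$ you implicitly invoke the \emph{refined} inequality $|\nabla(\phi_\varepsilon\circ h)|\le |\phi_\varepsilon'(h)||\nabla h|$ rather than the crude Lipschitz bound $|\nabla(\phi_\varepsilon\circ h)|\le |\nabla h|$. The refined bound is not what the ``pass one'' approximation argument straightforwardly delivers: when you extract a pointwise a.e.\ subsequence $f_n\to f$, the map $\phi_\varepsilon'$ is discontinuous at $\pm\varepsilon$, so $\phi_\varepsilon'(f_n(x))\not\to\phi_\varepsilon'(f(x))$ on $\{|f|=\varepsilon\}$. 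You can repair this by observing $m(\{|h|=\varepsilon\})=0$ for all but countably many $\varepsilon$ and working along such a sequence, and by using the sharper pointwise bound $\operatorname{lip}(\phi_\varepsilon\circ f_n)\le \chi_{\{|f_n|\ge\varepsilon\}}\operatorname{lip}(f_n)$ for Lipschitz $f_n$ before passing to the weak $L^2$ limit; but as written the two-pass strategy is not yet airtight. Second, the reverse inequality in (ii) ``by composing with $\phi^{-1}$'' requires $\phi^{-1}$ Lipschitz, i.e.\ $\phi'$ bounded away from zero, not merely $\phi'\ne 0$ (consider $\phi(t)=\arctan t$, whose inverse is not globally Lipschitz); since $f\in W^{1,2}$ is unbounded in general, you cannot restrict to a compact range, so the covering/decomposition step needs $\phi'$ uniformly positive on each piece. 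Third, and independently of the circularity issue, your argument for (iv) produces the identity only after you know $\phi''(f)|\nabla f|^2\in L^2$; boundedness of $f$ gives $\phi''(f)\in L^\infty$, but $|\nabla f|^2\in L^2$, equivalently $|\nabla f|\in L^4$, does \emph{not} follow from $f\in D(\Delta)\cap L^\infty$ in a general infinitesimally Hilbertian space. That integrability is available in the paper's ambient $\text{RCD}^*(K,N)$ framework (via $D(\Delta)\subseteq H^{2,2}$ and $|\nabla f|^2\in W^{1,2}$), so the lemma is true as used, but your proof silently assumes a hypothesis the statement omits and you should flag it. Items (iii) and (v) as you sketch them are correct: polarization of the squared-gradient chain rule does give the Leibniz rule, and in (v) you identify precisely the role of the two extra hypotheses (continuity of $g$ to integrate against the possibly singular $\bm{\Delta}f$; absolute continuity of $\bm{\Delta}g$ to integrate the merely-$L^\infty$ factor $f$ against it).
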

By the $L^2$-lower semicontinuity and convexity of the Cheeger energy, the heat semigroup $P_tf$ is defined as the gradient flow in $L^2$ of the Cheeger energy starting from $f\in L^2$ based on the classical Brezis\--Komura theory, which provides the existence and uniqueness results.
Moreover, for any $f\in L^2$, it holds that $t\mapsto P_t f$ is locally absolutely continuous on $(0,\infty)$ and $P_tf$ is in $D(\Delta)$ for all $t>0$ and
\begin{equation}
    \frac{d}{dt}P_t f = \Delta P_t f, \quad \text{for almost all } t\in (0,\infty).
\end{equation} 
Under the further assumption that $(X,d,m)$ is infinitesimally Hilbertian, the heat semigroup $P_t$ is linear, strongly continuous, contractive and order-preserving in $L^2$.
Moreover, $P_t$ can be extended into a linear, mass preserving and strongly continuous operator in $L^p$ for any $1\leq p<\infty$, see \citep{ambrosio2014calculus} and further results therein.

Finally, we recall the definition (see for example \cite[Definition 2.14]{honda2018local}) of the local Sobolev space $W^{1,2}(\Omega)$ on an open subset $\Omega\subset X$.
We denote by $\mathrm{Lip}_c(\Omega)$ the space of Lipschitz functions on $\Omega$ with compact support in $\Omega$ and by $\mathrm{Lip}_{loc}(\Omega)$ the space of locally Lipschitz functions on $\Omega$.
\begin{definition}[Local Sobolev space]
    Let $\Omega\subset X$ be an open subset.
    We say that $f\in L^2(\Omega)$ is in $W^{1,2}(\Omega)$ if
    \begin{enumerate}
        \item[(i)] $\eta f$ is in $W^{1,2}$ for all $\eta\in \mathrm{Lip}_c(\Omega)$; 
        \item[(ii)] $|\nabla f|\in L^2(\Omega)$ where $|\nabla f|:=|\nabla (\eta f)|$ $m$-a.e. on $\{\eta=1\}$ for $\eta\in \mathrm{Lip}_{c}(\Omega)$.
    \end{enumerate}
\end{definition}
We remark that by locality of the minimal relaxed gradient and Condition (i), Condition (ii) makes sense.
As for the Laplacian on $\Omega$, it is modified as follows: A function $f$ in $W^{1,2}(\Omega)$ belongs to $D(\Delta,\Omega)$ provided that there exists $g$ in $L^2(\Omega)$ such that
\begin{equation*}
    -\int_{\Omega} \langle \nabla f, \nabla \phi \rangle dm = \int_{\Omega} g \phi dm, \quad \text{for all }\phi \in \mathrm{Lip}_c(\Omega),
\end{equation*}
and we denote $\Delta_{\Omega}f:=g$.
Clearly, $\Delta_{\Omega}$ is linear by infinitesimal Hilbertianty and one can easily check that for $f$ in $D(\Delta)$, its restriction to $\Omega$ belongs to $D(\Delta,\Omega)$.

\subsection{RCD metric measure spaces}
\citet*{erbar2015equivalence,ambrosio2016bakry,ambrosio2019nonlinear} introduced the notion of the Riemannian curvature-dimension condition $\mathrm{RCD}^*(K,N)$ as the finite dimensional counterpart to $\mathrm{RCD}(K,\infty)$, itself introduced by \citet*{ambrosio2014metric} based on the curvature-dimension condition proposed by \citep{lott2009ricci,sturm2006geometry,sturm2006geometry2} to rule out the Finsler geometry.
It is shown in \cite{erbar2015equivalence,ambrosio2016bakry} that the $\mathrm{RCD}^*(K,N)$ spaces satisfy the so-called ``local\--to\--global'' property (see also \citep{bacher2010localization}).
Very recently, \citet*{cavalletti2021globalization} show that $\mathrm{RCD}(K,N)$ and $\mathrm{RCD}^*(K,N)$ conditions are equivalent if the reference measure is finite.

The notion of the $\mathrm{RCD}^*(K,N)$ condition can be defined in several equivalent ways, see \citep{erbar2015equivalence,ambrosio2016bakry,ambrosio2019nonlinear} for $\mathrm{RCD}^*(K,N)$ cases and \citep{ambrosio2014metric,ambrosio2015riemannian,ambrosio2015bakry} for $\mathrm{RCD}(K,\infty)$ cases.
In this work we give a definition including the case where $N=\infty$ from an Eulerian point of view based on the abstract $\Gamma$-calculus.
\begin{definition}
    We say that a metric measure space $(X,d,m)$ satisfies the $\mathrm{RCD}^*(K,N)$ condition for $K$ in $\mathbb{R}$ and $N$ in $[1,\infty]$ provided that
    \begin{enumerate}[label=\textit{(\roman*)}]
        \item $m(B(x,r))\leq C \exp(C r^2)$ for some $x$ in $X$ and $C>0$;
        \item \textbf{Sobolev-to-Lip property}: any $f$ in $W^{1,2}$ with $|\nabla f|$ in $L^{\infty}$ admits a Lipschitz representation $\tilde{f}$ in $\mathrm{Lip}$ such that $f=\tilde{f}$ $m$-a.e and $\mathrm{Lip}(f)=\||\nabla f|\|_{\infty}$.
        \item $(X,d,m)$ is infinitesimally Hilbertian.
        \item \textbf{Weak Bochner inequality}: for any $f$ and $g$ in $D(\Delta)$ with $\Delta f$ in $W^{1,2}$, $\Delta g$ in $L^\infty$ and $g \geq 0$, it holds
        \begin{equation}\label{eq:weak_Bochner}
            \int_{X}\Delta g \frac{|\nabla f|^2}{2}dm
            \geq
            \int_{X}g \left( \frac{1}{N}(\Delta f)^2 + \langle \nabla f, \nabla \Delta f \rangle + K |\nabla f|^2 \right)dm.
        \end{equation}
    \end{enumerate}
\end{definition}
In the following, we assume that $(X,d,m)$ is a compact $\mathrm{RCD}^\ast(K,N)$ space with $N$ in $(1, \infty)$, $K\in \mathbb{R}$ and $\mathrm{diam}(X)=D<\infty$, which is the framework of the results in this work

First, $X$ satisfies the generalized Bishop\--Gromov inequality, that is, for any $0<r<R$ and $x\in X$,
\begin{equation}
    \frac{m(B(x,R))}{v_{K,N}(R)}
    \leq
    \frac{m(B(x,r))}{v_{K,N}(r)},
\end{equation}
where $v_{K,N}(r)$ is the volume of ball with radius $r>0$ in the model space (see \cite[Theorem 6.2]{bacher2010localization}).
In particular, $X$ is globally doubling with the constant $2^N$ when $K\geq 0$, that is,
\begin{equation}
    m(B(x,2r)) \leq 2^N m(B(x,r)),\quad \text{for any } x\in X \text{ and } r>0,
\end{equation}
and with the constant $C(K,N,D)$ depending only on $K,N$ and $D$ when $K<0$.
It also holds that $m(B(x,r))>0$ for any $r>0$ and $x$ in $X$ and that $m(X)<\infty$.
Thus by \cite{cavalletti2021globalization}, $X$ also satisfies the $\mathrm{RCD}(K,N)$ condition.

Second, $X$ supports the weak $(1,1)$-Poincaré inequality, see \citep[Theorem 1.1]{rajala2012interpolated}, that is, for any $x$ in $X$, $r>0$ and any continuous function $f:X\rightarrow \mathbb{R}$ and any upper gradient $g$ of $f$, we have
\begin{equation}\label{eq:Poincare_ineq}
    \fint_{B(x,r)}\left| f- (f)_{x,r} \right|dm
    \leq
    C r \fint_{B(x,2r)}g dm,
\end{equation}
where the constant $C$ only depends on $K$, $N$ and $r$, and $\fint_{\Omega}f dm:=\frac{1}{m(\Omega)}\int_{\Omega}f dm$ and $(f)_{x,r}:=\int_{B(x,r)}f dm$.
Clearly, by H\"older inequality, $X$ also supports the weak $(1,q)$-Poincaré inequality for any $q\in (1,\infty)$.
This implies that $X$ is connected, see \citep[Proposition 4.2]{bjorn2011nonlinear}.
In fact, one can show that $X$ is also a geodesic space.

Third, when $K>0$, the Bonnet-Myers theorem implies that $\mathrm{diam}(X)\leq \pi\sqrt{(N-1)/K}$, see \citep[Corollary 2.6]{sturm2006geometry2}.
Since the normalization of the reference measure does not affect $\mathrm{RCD}^*(K,N)$ conditions, it is not restrictive to assume that $m(X)=1$ when $X$ is compact.

Fourth, in the $\mathrm{RCD}$ setting, for Sobolev functions $f\in W^{1,2}$, it is possible to identify the gradient $\nabla f$, rather than the modulus of the gradient $|\nabla f|$, as the unique element in the tangent module $L^2(TX)$, which is a $L^2(m)$-normed $L^{\infty}(m)$-module, see \citep{gigli2020lectures,gigli2018nonsmooth}.
Therein, a second-order calculus on $\mathrm{RCD}(K,\infty)$ spaces is also developed such that the notions of Hessian $\mathrm{Hess}f$ and its pointwise norm $|\mathrm{Hess}f|_{HS}\in L^2$ are well-defined.
For the complete theory, we refer readers to \citep{gigli2020lectures,gigli2018nonsmooth}.
Here we only mention the inclusion $D(\Delta)\subseteq H^{2,2}$.
Consequently, for any $f$ in $D(\Delta)\cap \mathrm{Lip}$, we have $|\nabla f|^2$ in $W^{1,2}$ and
\begin{equation}\label{eq:H_2_2}
    |\nabla |\nabla f|^2|
    \leq
    2 |\mathrm{Hess}f|_{HS}|\nabla f|,
\end{equation}
see \citep[Theorem 3.3.18]{gigli2018nonsmooth} or \citep[Lemma 3.5]{debin2021quasi}.
\begin{remark}
    Although there exist different notions of the Sobolev space such as the Newtonian space and the Cheeger\--Sobolev space, see \citep{bjorn2011nonlinear,shanmugalingam2000newtonian,cheeger1999}, and different notions of weak upper gradients on metric measure spaces, all these notions are equivalent to each other in the setting of $\mathrm{RCD}^*(K,N)$ spaces.
    In particular, the minimal relaxed gradient coincides with the minimal weak upper gradient, which is defined via test plans and geodesics, see \citep[Definition 2.1.8]{gigli2020lectures}, and $W^{1,2}$ is reflexive.
    For more details, we refer to \citep{ambrosio2013density}.
\end{remark}

Fifth, we recall a regularity result of the Poisson equation on the ball, see \citep[Lemma 3.4]{zhang2016local} and \citep[Theorem 3.1]{koskela2003lipschitz} proved in the setting of the heat semigroup  curvature condition.
\begin{lemma}\label{lemma:Poisson_regularity_Zhu}
    Let $(X,d,m)$ be a $\mathrm{RCD}^*(K,N)$ space with $K$ in $\mathbb{R}$ and $N$ in $[1,\infty)$.
    Let further $g$ be in $L^{\infty}(B_R)$ where $B_R:=B(x_0,R)$ is a geodesic ball centered at some $x_0$ in $X$ with radius $R>0$.
    Assume that $f$ is in $W^{1,2}(B_R)$ and $\Delta_{B_R}f=g$ on $B_R$.
    Then it holds that $|\nabla f|$ is in $L^{\infty}_{loc}(B_R)$ and
    \begin{equation*}
        \| |\nabla f|\|_{L^{\infty}(B_{R/2})}
        \leq
        C(N,K,R)\left(\frac{1}{m(B_R)}\|f\|_{L^1(B_R)} + \|g\|_{L^{\infty}(B_R)}\right).
    \end{equation*}
\end{lemma}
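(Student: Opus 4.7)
I would derive this Cheng-Yau type gradient bound by coupling the weak Bochner inequality \eqref{eq:weak_Bochner} with a Moser iteration. The formal heuristic is that if $\Delta f = g \in L^\infty$, then $u := |\nabla f|^2$ should satisfy
\begin{equation*}
\tfrac{1}{2}\bm{\Delta} u \;\geq\; K u + \tfrac{1}{N} g^2 + \langle \nabla f, \nabla g\rangle
\end{equation*}
in the weak sense, obtained by substituting $\Delta f = g$ into Bochner. Once a rigorous weak version of this subsolution property is in hand, the Bishop--Gromov volume doubling and weak $(1,2)$-Poincar\'e inequality, both valid on $\text{RCD}^*(K,N)$, provide the Sobolev inequality on balls needed to drive a standard Moser iteration, yielding
$\sup_{B_{R/2}} u \leq C(N,K,R)\bigl(\fint_{B_{3R/4}} u\, dm + \|g\|_{L^\infty(B_R)}^2\bigr)$.
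A final Caccioppoli estimate on the original equation $\Delta f = g$ then converts the $L^2$ average of $|\nabla f|$ into the $L^1$ average of $f$ plus $\|g\|_{L^\infty(B_R)}$, giving the stated bound.

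\textbf{Regularization and cut-off.} The weak Bochner inequality \eqref{eq:weak_Bochner} requires $\Delta f \in W^{1,2}$, while we only have $\Delta_{B_R} f = g \in L^\infty(B_R)$. To bridge the gap, I would first localize $f$ by multiplying with a good cut-off $\eta$ supported in $B_R$ with $\eta \equiv 1$ on $B_{3R/4}$ and $\|\eta\|_\infty + \|\Delta \eta\|_\infty < \infty$, whose existence on $\text{RCD}^*(K,N)$ spaces is guaranteed by \cite[Lemma 3.1]{mondino2019structure} as recalled in the excerpt. Then I would regularize through the heat semigroup, setting $f_t := P_t(\eta f)$; for each $t>0$ one has $f_t \in D(\Delta)$ with $\Delta f_t$ Lipschitz, so \eqref{eq:weak_Bochner} applies to $f_t$ against a further nonnegative cut-off test function $\varphi$. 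Passing to the limit $t \to 0^+$ using strong $W^{1,2}$-continuity of $P_t$, the inclusion $D(\Delta) \subseteq H^{2,2}$, and the estimate \eqref{eq:H_2_2}, furnishes the desired weak subsolution property of $u = |\nabla f|^2$ on $B_{3R/4}$, with the cut-off commutator terms absorbed into the constant $C(N,K,R)$.

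\textbf{Main obstacle.} The delicate part is this regularization step, because the term $\langle \nabla f, \nabla g\rangle$ appearing in the heuristic inequality is not directly meaningful (the right-hand side $g$ is only $L^\infty$), and must be encoded solely via the approximation $\Delta f_t = P_t \Delta(\eta f)$. In the passage to the limit one must control not merely the convergence $\Delta f_t \to \eta g + f \Delta\eta + 2\langle\nabla \eta, \nabla f\rangle$ in $L^2$, but also the cross term $\int \varphi \langle \nabla f_t, \nabla \Delta f_t\rangle dm$ in \eqref{eq:weak_Bochner}; the standard trick is to integrate by parts once so that only $\Delta f_t$ and $\Delta \varphi$ (rather than $\nabla \Delta f_t$) appear, exploiting $\Delta \varphi \in L^\infty$ from the good cut-off construction. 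Once this limiting subsolution inequality for $|\nabla f|^2$ is secured, the remainder, namely Caccioppoli estimates tested with $\eta^2(u+\|g\|_{L^\infty}^2)^{p-1}$ iterated over a shrinking sequence of radii $r_k \searrow R/2$ combined with the doubling-Poincar\'e Sobolev embedding, is routine bookkeeping of constants.
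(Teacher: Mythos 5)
The paper does not prove Lemma \ref{lemma:Poisson_regularity_Zhu}: it imports it as a known result, citing \citep[Lemma~3.4]{zhang2016local} and \citep[Theorem~3.1]{koskela2003lipschitz}. Your Bochner-plus-Moser plan is precisely the strategy those references carry out, so the route you chose is the correct one and matches the argument the paper implicitly relies on.

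That said, your sketch elides the step that makes the cited proofs genuinely technical, and it is not the one you flag as the ``main obstacle.'' The $\langle\nabla f,\nabla g\rangle$ issue is indeed dispatched by a single integration by parts, though the outcome is slightly misstated: testing with $\varphi\,\Delta f_t$ produces $-\int\varphi(\Delta f_t)^2\,dm-\int\Delta f_t\,\langle\nabla\varphi,\nabla f_t\rangle\,dm$, so the extra term involves $\nabla\varphi$, not $\Delta\varphi$ (harmless, since $\varphi$ is a good cut-off with bounded gradient and Laplacian). The genuine gap is a priori regularity: to run Moser iteration you need $u=|\nabla(\eta f)|^2$ to be a legitimate $W^{1,2}_{loc}$ weak subsolution, but from $\eta f\in D(\Delta)\subseteq H^{2,2}$ the Hessian bound \eqref{eq:H_2_2} only gives $|\nabla u|\le 2|\mathrm{Hess}(\eta f)|_{HS}\,|\nabla(\eta f)|\in L^1_{loc}$, not $L^2_{loc}$, precisely because $|\nabla f|$ is not yet known to be bounded --- which is the conclusion one is trying to prove. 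The cited proofs break this circularity by an intermediate bootstrap (a reverse-H\"older/Gehring self-improvement of the Caccioppoli inequality, or a preliminary $L^p$-gradient estimate for every finite $p$) before the $L^\infty$ bound can close. Without that bootstrap the ``routine bookkeeping'' at the end of your outline cannot start; with it, your program does reproduce the lemma.
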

\begin{remark}
    Note that our definition of the local Sobolev space $W^{1,2}(\Omega)$ on some open domain $\Omega$ is a priori different from the one $H^{1,2}(\Omega)$ used in \citep{koskela2003lipschitz}, which is the completion of $\mathrm{Lip}_{loc}(\Omega)$ with respect to the norm $\|f\|_{H^{1,2}(\Omega)}:=\|f\|_{L^2(\Omega)}+\| \mathrm{lip}(f)\|_{L^2(\Omega)}$.
    However,  on the one hand, as shown in \cite[Remark 2.15]{honda2018local}, this space coincides with the Cheeger\--Sobolev space $Ch^{1,2}(\Omega)$ defined by \citet*{cheeger1999} on $\mathrm{RCD}^*(K,N)$ spaces.
    On the other hand, since $\mathrm{Lip}_{loc}(\Omega)$ is dense in $Ch^{1,2}(\Omega)$ (see \cite[Theorem 5.47]{bjorn2011nonlinear}) and the norms of $Ch^{1,2}(\Omega)$ and $H^{1,2}(\Omega)$ are equal for $\mathrm{Lip}_{loc}(\Omega)$ on $\mathrm{RCD}^*(K,N)$ spaces\footnote{Since the local Lipschitz constant of Lipschitz functions coincides with their minimal weak upper gradient in the Cheeger sense, see \cite[Theorem A.7]{bjorn2011nonlinear}}, these two spaces coincide with each other.
    Consequently, $H^{1,2}(\Omega)$ and $W^{1,2}(\Omega)$ coincide with each other.
    
\end{remark}

We also recall the following result of the Sobolev inequality for the compact $\mathrm{RCD}(K,N)$ space with $K\in \mathbb{R}$ and $N$ in $(2,\infty)$, proved in \cite{nobili2022rigidity}.
As a result, we have a metric version of the Rellich\--Kondrachov type theorem for compact $\mathrm{CD}(K,N)$ spaces with $K\in \mathbb{R}$ and $N\in (2,\infty)$.
Similar results have been previously proved by \citet*{profeta2015sharp} for $\mathrm{RCD}^*(K,N)$ spaces with $K>0$ and $N\in (2,\infty)$.
\begin{lemma}\label{lemma:Pre_Sobolev_embedding}
    Let $(X,d,m)$ be a compact $\mathrm{RCD}^*(K,N)$ space with $K\in \mathbb{R}$ and $N$ in $(2,\infty)$ and $\mathrm{diam}(X)\leq D$ for some $D>0$ and $m(X)=1$.
    \begin{enumerate}[label=\textit{(\roman*)}, fullwidth]
        \item \textbf{Sobolev inequality (\cite[Proposition 5.1]{nobili2022rigidity})}:
            there exist a constant $A>0$ depending only on $K,N,D$, such that for every $f$ in $W^{1,2}$, it holds
            \begin{equation}\label{eq:Sobolev_ineq}
                \| f \|^2_{2^*}
                \leq
                \|f\|^2_2 + A\cdot \mathrm{Ch}(f),
            \end{equation}
            where $2^*=2N/(N-2)$ is the Sobolev conjugate of $2$.
        \item\label{lemma:Rellich_Kondrachov} \textbf{Rellich\--Kondrachov}: let $(f_n)$ be a sequence in $W^{1,2}$ with $\sup_{n}\|f_n\|_{W^{1,2}}<\infty$.
            Then there exists $f$ in $W^{1,2}$ and a subsequence $(f_{n_k})_k$ such that for every $1\leq q<2^*$, it holds that
            \begin{equation*}
                f_{n_k}\rightarrow f \quad \text{in } L^q(X,m).
            \end{equation*}
    \end{enumerate}
\end{lemma}
The proof of the Rellich\--Kondrachov type theorem above follows the argument in \cite[Theorem 8.1]{hajlasz2000sobolev} and the equivalent characterizations of weak Poincaré inequalities from \citet*{keith2003modulus}, which is actually the same as the one in \cite[Proposition 4.2]{profeta2015sharp} for $\mathrm{RCD}^*(K,N)$ spaces with $K>0$ and $N\in (2,\infty)$.
For the sake of completeness, we give the proof in the Appendix \ref{appendix_B}.

Finally, we mention a key result about the heat semigroup and the resolvent of the Laplacian.
Recall that for the heat semigroup $P_t$, we say that $P_t$ is ultracontractive if for $1\leq p<q\leq \infty$, there exists a constant $C(t)>0$ such that for any $f$ in $L^p$, it holds that
\begin{equation*}
    \| P_t f\|_{q} \leq C(t)\|f\|_{p},\quad t>0,
\end{equation*}
and we denote $\|P_t\|_{(p,q)}:=\sup_{\|f\|_{p}\leq 1}\|P_t f\|_{q}$.
The ultracontractive property of the heat semigroup is equivalent to the Sobolev inequality for the Markov triple associated with the heat semigroup, see \citep[Theorem 6.3.1]{bakry2014analysis}.
For compact $\mathrm{RCD}^{*}(K,N)$ spaces with $K\in \mathbb{R}$ and $N$ in $(2,\infty)$, since the Sobolev inequality holds, it follows that $P_t$ has the ultracontractive property.
More precisely, for $1\leq p<q\leq \infty$ and $0<t\leq 1$, it holds that
\begin{equation*}
    \| P_t \|_{(p,q)} \leq C t^{-\frac{N}{2}(\frac{1}{p}-\frac{1}{q})},
\end{equation*}
where $C>0$ is a constant depending on $K$ and $N$.
As a consequence, the ultracontractive property of the heat semigroup provides the following boundedness result for the resolvent operator of the Laplacian $R_{\lambda}:=(\lambda I - \Delta)^{-1}$ for $\lambda>0$, the proof of which can be found in \citep[Lemma 4.1]{profeta2015sharp} and \citep[Corollary 6.3.3]{bakry2014analysis}.
\begin{lemma}\label{lemma:ultracontra_resolvent}
    On a compact $\mathrm{RCD}^*(K,N)$ space with $K\in \mathbb{R}$ and $N$ in $(2,\infty)$, let $\lambda>0$.
    If $1\leq p\leq N/2$, the resolvent $R_{\lambda}:L^p\rightarrow L^q$ is bounded for each $1\leq q < pN/(N-2p)$.
    If $p>N/2$, the resolvent $R_{\lambda}: L^p \rightarrow L^{\infty}$ is bounded.
\end{lemma}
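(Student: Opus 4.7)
The plan is to realise the resolvent as the Laplace transform of the heat semigroup,
\begin{equation*}
    R_\lambda f \;=\; \int_0^\infty e^{-\lambda t}\, P_t f\, dt,
\end{equation*}
which holds on $L^2$ by the spectral calculus for the non-positive self-adjoint operator $\Delta$ and extends to each $L^p$, $1\le p<\infty$, by Hille--Yosida, using that $(P_t)$ is a $C_0$-contraction semigroup on $L^p$ as recalled earlier in the excerpt. Once this representation is in place, Minkowski's integral inequality gives
\begin{equation*}
    \|R_\lambda f\|_q \;\leq\; \left(\int_0^\infty e^{-\lambda t}\,\|P_t\|_{(p,q)}\,dt\right)\|f\|_p,
\end{equation*}
so everything reduces to bounding the scalar integral on the right.

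I would split the integral at $t=1$. For the small-time piece $t\in(0,1]$, I would plug in the ultracontractivity bound $\|P_t\|_{(p,q)}\le C t^{-\frac{N}{2}(\frac{1}{p}-\frac{1}{q})}$ recalled just before the lemma. The integral $\int_0^1 e^{-\lambda t}\,t^{-\frac{N}{2}(\frac{1}{p}-\frac{1}{q})}\,dt$ is finite precisely when $\frac{N}{2}(\frac{1}{p}-\frac{1}{q})<1$, i.e.\ when $\frac{1}{q}>\frac{1}{p}-\frac{2}{N}$. A direct rearrangement gives the stated case split: $q<pN/(N-2p)$ when $1\le p<N/2$; any $q<\infty$ when $p=N/2$; and any $q\le\infty$ when $p>N/2$.

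For the large-time piece $t>1$, I would factor $P_t=P_{t-1}\circ P_1$ and combine two facts: the ultracontractivity bound evaluated at $t=1$ yields $\|P_1\|_{(p,q)}\le C$ (valid for every admissible pair $(p,q)$, including $q=\infty$ when $p>N/2$), and $L^q$-contractivity of the semigroup gives $\|P_{t-1}\|_{(q,q)}\le 1$. Thus $\|P_t\|_{(p,q)}\le C$ uniformly for $t>1$, and the exponential weight $e^{-\lambda t}$ makes the tail integral finite with value $\le C e^{-\lambda}/\lambda$. Adding the two contributions produces a bound of the form $\|R_\lambda f\|_q\le C(K,N,\lambda)\,\|f\|_p$, which is what is claimed.

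The only genuinely delicate point is the first one: verifying that the $L^2$ resolvent identity transfers to $L^p$ for $p\ne 2$. I would address this by invoking the standard semigroup-theoretic fact that for any $C_0$-contraction semigroup on a Banach space the resolvent of its infinitesimal generator agrees with the Laplace transform of the semigroup, together with the consistency of these $L^p$ generators with $\Delta$ on the intersection $L^p\cap D(\Delta)$---a consistency that follows from the strong continuity and mass-preservation properties of $P_t$ already recorded in Section~2. Once this foundational step is in place, the rest of the argument is just the two integrability checks above.
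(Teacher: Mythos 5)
The paper does not prove this lemma itself but cites Profeta (2015, Lemma~4.1) and Bakry--Gentil--Ledoux (2014) for it; your Laplace-transform representation of the resolvent, with the split at $t=1$, ultracontractivity controlling the integrable short-time singularity $t^{-\frac{N}{2}(\frac{1}{p}-\frac{1}{q})}$, and $L^q$-contractivity plus $\|P_1\|_{(p,q)}\leq C$ taming the exponentially damped tail, is exactly the standard route those references take. Your argument is correct, including the consistency check that the $L^p$ resolvent agrees with $(\lambda I-\Delta)^{-1}$ on $L^p\cap L^2$, which is all that is needed for the paper's later application where the resolvent is applied to a function already in $L^2$.
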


\section{Existence of positive extremal functions}
From now on, we assume that $(X,d,m)$ is a compact $\mathrm{RCD}^*(K,N)$ space with $K\in \mathbb{R}$ and $N$ in $(2,\infty)$, and $m$ is a Borel probability measure with full support.
We consider the following variational problem
\begin{equation}\label{eq:var_pbm_general}
    \inf\left\{\int_{X}\left(|\nabla f|^2 +\alpha_1 f^2 - \frac{\alpha_2}{2} f^2 \log f^2\right)dm : f\in W^{1,2},\|f\|_2=1 \right\},
\end{equation}
where $\alpha_1$ and $\alpha_2$ are in $\mathbb{R}$.
The infimum quantity of variational problem \eqref{eq:var_pbm_general} is called the log Sobolev constant $\lambda:=\lambda(\alpha_1,\alpha_2)$ on $X$ with parameters $\alpha_1$ and $\alpha_2$, and the functional in \eqref{eq:var_pbm_general} is called the log-Sobolev functional.
\begin{definition}\label{def:extremal_function}
    Provided that $\lambda=\lambda(\alpha_1,\alpha_2)$ is a finite number, we call a function $u$ in $W^{1,2}$ the extremal function of the variational problem \eqref{eq:var_pbm_general} if $\|u\|_2=1$ and
    \begin{equation}
        \lambda
        =
        \int_{X}\left( |\nabla u|^2 +\alpha_1 u^2 - \frac{\alpha_2}{2} u^2 \log u^2\right)dm.
    \end{equation} 
\end{definition}

In the following, we provide our main result of this section.
It states the existence of non-negative extremal functions of the variational problem \eqref{eq:var_pbm_general}.
Moreover, we show that all non-negative extremal functions are actually Lipschitz continuous and bounded away from zero on $X$.
As a corollary, we show that the logarithmic transform of any non-negative extremal functions is Lipschitz and in the domain of the Laplacian and satisfies some Euler\--Lagrange equation.
\begin{theorem}\label{thm:var_general}
    Let $(X,d,m)$ be a compact $\mathrm{RCD}^*(K,N)$ spaces with $K\in \mathbb{R}$ and $N\in (2,\infty)$, and let $\alpha_1$ and $\alpha_2$ be given constants with $\alpha_2>0$.
    Then the log Sobolev constant $\lambda=\lambda(\alpha_1,\alpha_2)$ has finite value and the variational problem \eqref{eq:var_pbm_general} admits non-negative extremal functions.
    Moreover, any non-negative extremal function $u$ satisfies the following properties:
    \begin{enumerate}[label=\textit{(\roman*)}]
        \item $u$ is in $D(\Delta)$ and satisfies
            \begin{equation}\label{eq:var_pbm_pde}
                -\Delta u = \alpha_2 u\log u + (\lambda - \alpha_1)u.
            \end{equation}
            Furthermore, if $\lambda\neq \alpha_1$, then $u$ is non-constant.
        \item $u$ is Lipschitz continuous;
        \item $u$ is positive.\footnote{Since $X$ is compact, $u>\delta >0$ for some $\delta$.}
    \end{enumerate}
\end{theorem}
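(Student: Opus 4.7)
\emph{Proof plan.} The strategy is the classical direct method of the calculus of variations, followed by elliptic bootstrap for the Euler--Lagrange equation and a metric-space version of the strong minimum principle to rule out vanishing.

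First, to show $\lambda=\lambda(\alpha_1,\alpha_2)$ is finite, test $f\equiv 1$ (permitted since $m(X)=1$) to get $\lambda\leq\alpha_1$. For a lower bound, I apply Jensen's inequality to the probability measure $f^2\,m$ and the concave function $\log$ with $h=f^{q-2}$; choosing $q=2^*$ produces, for $\|f\|_2=1$, the bound
\begin{equation*}
    \int_X f^2\log f^2\,dm\ \leq\ N\log\|f\|_{2^*}\ \leq\ \tfrac{N}{2}\log\bigl(A+B\,\text{Ch}(f)\bigr),
\end{equation*}
using the Sobolev inequality of Lemma~\ref{lemma:Pre_Sobolev_embedding}. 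Since $t\mapsto t-c\log(A+Bt)$ is bounded below and coercive, the log-Sobolev functional is bounded below on $\mathcal{F}$ and every minimizing sequence $(f_n)\subset\mathcal{F}$ has uniformly bounded $W^{1,2}$-norm. Replacing $f_n$ by $|f_n|$ (the chain rule leaves the functional invariant), I may assume $f_n\geq 0$; Rellich--Kondrachov then provides a subsequence with $f_n\to u$ in $L^q$ for every $q<2^*$ and $f_n\rightharpoonup u$ weakly in $W^{1,2}$. Lower semicontinuity of $\text{Ch}$, together with uniform integrability of $f_n^2\log f_n^2$ (via domination by $\varepsilon^{-1}f_n^{2+\varepsilon}$ and the $L^{2+\varepsilon}$-bound for $\varepsilon$ small), gives $\lambda\leq J(u)\leq\liminf J(f_n)=\lambda$, where $J$ denotes the log-Sobolev functional in \eqref{eq:var_pbm_general}. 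Hence $u\geq 0$ is a minimizer with $\|u\|_2=1$.

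For (i), I differentiate the normalized perturbation $\varepsilon\mapsto J\bigl((u+\varepsilon\phi)/\|u+\varepsilon\phi\|_2\bigr)$ at $\varepsilon=0$ for $\phi\in W^{1,2}\cap L^\infty$; testing with $\phi=u$ determines the Lagrange multiplier $\mu=\lambda-\alpha_2/2$ and yields the weak Euler--Lagrange identity
\begin{equation*}
    \int_X\langle\nabla u,\nabla\phi\rangle\,dm=(\lambda-\alpha_1)\int_X u\phi\,dm+\alpha_2\int_X (u\log u)\phi\,dm.
\end{equation*}
To promote this to $u\in D(\Delta)$ satisfying \eqref{eq:var_pbm_pde} I bootstrap the integrability of $u$: using $|\log t|\leq C_\delta t^\delta$ for $t$ large so that $u\log u\in L^{p-\varepsilon}$ whenever $u\in L^p$, I rewrite the equation as $u=R_\beta\bigl(\alpha_2 u\log u+(\lambda-\alpha_1+\beta)u\bigr)$ for some $\beta>0$ and iterate Lemma~\ref{lemma:ultracontra_resolvent}. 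Each step strictly increases the Lebesgue exponent, and after finitely many iterations $u\in L^\infty$; then the right-hand side of \eqref{eq:var_pbm_pde} lies in $L^\infty\subset L^2$, giving $u\in D(\Delta)$. For (ii), Lemma~\ref{lemma:Poisson_regularity_Zhu} applied on a finite covering of the compact space $X$ yields $|\nabla u|\in L^\infty$, and the Sobolev-to-Lip property supplies a Lipschitz representative. For the non-constancy statement, if $u$ were constant the conditions $\|u\|_2=1$ and $u\geq 0$ force $u\equiv 1$, so $\int u^2\log u^2\,dm=0$ and $\lambda=\alpha_1$; contrapositively $\lambda\neq\alpha_1$ implies $u$ non-constant.

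The main obstacle is the positivity in (iii): Rothaus's Riemannian argument reduces to a one-dimensional ODE via polar coordinates and sharp asymptotics near a would-be zero, both unavailable in the non-smooth $\text{RCD}^*(K,N)$ setting. The key observation is that at a zero of $u$ the source term $g(u)=\alpha_2 u\log u+(\lambda-\alpha_1)u$ satisfies $g(u)/u\to-\infty$ as $u\to 0^+$ (since $\alpha_2>0$), so in a small neighborhood of any zero $u$ is a non-negative supersolution of $-\Delta u+cu\geq 0$ for arbitrarily large $c$, which fits into the De Giorgi class framework of \cite{kinnunen2001regularity}. The weak minimum principle available there asserts that if $u$ vanishes on a set of positive measure inside a ball, then $u\equiv 0$ on a concentric smaller ball. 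Combined with the continuity of $u$ from (ii) and the connectedness of $X$ (a consequence of the weak Poincar\'e inequality in the preliminaries), the set $\{u=0\}$ is both open and closed, hence either empty or all of $X$; since $\|u\|_2=1$, it must be empty, and compactness of $X$ with continuity of $u$ yields a uniform lower bound $u\geq\delta>0$.
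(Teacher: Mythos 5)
Your treatment of finiteness, existence, the Euler--Lagrange identity, the resolvent bootstrap, and the Lipschitz upgrade via Lemma~\ref{lemma:Poisson_regularity_Zhu} together with the Sobolev--to--Lip property matches the paper's argument closely (your Jensen estimate $\int f^2\log f^2\,dm\le N\log\|f\|_{2^*}$ with $q=2^*$ is a slightly cleaner parametrization of the same bound the paper obtains with a free parameter $\delta<N/(N-2)$, and your non-constancy observation is identical). The overall structure of your positivity step---reduce to a De Giorgi-class weak Harnack estimate from \cite{kinnunen2001regularity} near a hypothetical zero, then use connectedness and continuity to make $\{u=0\}$ clopen---is also the one the paper uses, so the proposal is not a ``genuinely different route.''

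However, the key claim that drives your positivity argument appears to have the sign reversed. Near a zero $x_0$, since $\lambda\le\alpha_1$ (test $f\equiv 1$) and $\alpha_2>0$, the source term $g(u)=\alpha_2 u\log u+(\lambda-\alpha_1)u$ satisfies $g(u)\le 0$, so $-\Delta u = g(u)\le 0$, i.e., $\Delta u\ge 0$: $u$ is a \emph{subsolution} (subharmonic) near $x_0$, not a supersolution. Likewise $g(u)/u\to-\infty$ gives $-\Delta u + cu = u\bigl(\alpha_2\log u + \lambda-\alpha_1+c\bigr) < 0$ once $u$ is small, which is again the subsolution (not supersolution) inequality. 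The weak minimum principle you invoke for the De Giorgi class is the $\inf$--half of the Harnack estimate and applies to non-negative \emph{supersolutions}; it gives no information for a non-negative subharmonic function (e.g.\ $|x|^2$ is a non-negative subsolution vanishing at an interior point). So the claim ``if $u(x_0)=0$ then $u\equiv 0$ near $x_0$'' does not follow from the De Giorgi/weak-Harnack machinery by the route you describe, and this step needs a different justification---one that genuinely exploits the precise (logarithmic, mild) blowup of $g(u)/u$ as $u\to 0^+$, as Rothaus does in the smooth case, rather than just its sign.
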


\begin{corollary}\label{corollary:var_log_transform}
    Let $\alpha_1$ and $\alpha_2$ be given constants with $\alpha_2>0$ and $u$ be an arbitrary non-negative extremal function of \eqref{eq:var_pbm_general}.
    Then $v:=\log u$ is Lipschitz and in $D(\Delta)$ and satisfies
    \begin{equation}\label{eq:var_log_transform_pde}
        -\Delta v = |\nabla v|^2 +\alpha_2v + \lambda-\alpha_1.
    \end{equation}
    In particular, the equation $-\Delta f=|\nabla f|^2 + \alpha_2 f $ admits Lipschitz weak solutions which are non-constant whenever $\lambda\neq \alpha_1$. 
\end{corollary}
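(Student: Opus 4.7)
The plan is to deduce everything by applying the chain rule \eqref{eq:Chain_rule} to the identity $v=\log u$, using crucially the three properties of $u$ already established in Theorem \ref{thm:var_general}: that $u$ lies in $D(\Delta)$ and satisfies the Euler-Lagrange equation \eqref{eq:var_pbm_pde}, that $u$ is Lipschitz (hence bounded above, since $X$ is compact), and that $u$ is bounded below by some $\delta>0$.

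First I would observe that because $u$ is continuous and $\delta \le u \le \|u\|_\infty$ on the compact space $X$, we may pick a function $\phi \in C^2(\mathbb{R})$ that coincides with $t\mapsto \log t$ on the interval $[\delta/2,\,\|u\|_\infty+1]$. Since $u\in D(\Delta)\cap L^\infty$, the chain rule \eqref{eq:Chain_rule} applies and yields $\phi(u)\in D(\Delta)$ together with
\begin{equation*}
    \Delta \phi(u) = \phi'(u)\,\Delta u + \phi''(u)\,|\nabla u|^2
    = \frac{\Delta u}{u} - \frac{|\nabla u|^2}{u^2}\quad m\text{-a.e.}
\end{equation*}
By locality, $\phi(u) = \log u = v$ as elements of $W^{1,2}$, so $v\in D(\Delta)$ with the above expression for $\Delta v$. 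The chain rule for the gradient moreover gives $|\nabla v| = |\nabla u|/u$ $m$-a.e. Since $u\ge\delta$ and $|\nabla u|\in L^\infty$ (Lipschitzness of $u$), we conclude $|\nabla v|\in L^\infty$, and the Sobolev-to-Lip property of $\mathrm{RCD}^*(K,N)$ spaces then provides a Lipschitz representative of $v$.

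Next I would substitute the Euler-Lagrange equation \eqref{eq:var_pbm_pde} for $u$ into the expression for $\Delta v$:
\begin{equation*}
    -\Delta v = -\frac{\Delta u}{u} + \frac{|\nabla u|^2}{u^2}
    = \frac{\alpha_2 u\log u + (\lambda-\alpha_1)u}{u} + |\nabla v|^2
    = |\nabla v|^2 + \alpha_2 v + \lambda - \alpha_1,
\end{equation*}
which is precisely \eqref{eq:var_log_transform_pde}. For the last assertion, setting $f := v + (\lambda-\alpha_1)/\alpha_2$ leaves both $\nabla f = \nabla v$ and $\Delta f = \Delta v$ unchanged, and a direct substitution converts \eqref{eq:var_log_transform_pde} into $-\Delta f = |\nabla f|^2 + \alpha_2 f$. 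Non-constancy of $f$ when $\lambda\ne\alpha_1$ follows from non-constancy of $u$ established in Theorem \ref{thm:var_general}(i), since $f$ is a strictly monotone function of $u$ up to an additive constant.

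This is essentially a routine calculus verification once Theorem \ref{thm:var_general} is in hand; the only mild subtlety is that the chain rule \eqref{eq:Chain_rule} is stated for $C^2$ functions on all of $\mathbb{R}$, which is dealt with by the smooth extension of $\log$ outside the range of $u$ together with the locality property of the gradient and Laplacian.
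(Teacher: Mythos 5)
Your proof is correct and follows essentially the same route as the paper's: apply the chain rule \eqref{eq:Chain_rule} using the boundedness of $u$ away from zero and infinity, substitute the Euler--Lagrange equation \eqref{eq:var_pbm_pde}, and then shift by the constant $(\lambda-\alpha_1)/\alpha_2$. Your explicit $C^2$ extension of $\log$ and the appeal to Sobolev-to-Lipschitz for the Lipschitz regularity of $v$ make the argument slightly more careful about points the paper treats implicitly, but the underlying proof is the same.
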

\begin{remark}
    A classical example of the above variational problem is the weak log-Sobolev inequalities, that is, the variational problems
    \begin{equation}
        \lambda_{\varepsilon}
        =
        \inf\left\{\frac{2}{\alpha_{\varepsilon}}\int_{X}|\nabla f|^2dm + \int_{X}\left(\varepsilon f^2-f^2\log f^2\right)dm: f\in W^{1,2}, \|f\|_2=1 \right\},
    \end{equation}
    where $\varepsilon\geq0$ and $\alpha_{\varepsilon}$ is defined as the supremum of those $C>0$ such that
    \begin{equation*}
        \int_{X}f^2\log f^2dm\leq \int_{X}\left(\frac{2}{C}|\nabla f|^2 + \varepsilon f^2\right) dm
    \end{equation*}
    for all $f$ in $W^{1,2}$ and $\| f\|_2 = 1$.

    The log-Sobolev inequality on the $\mathrm{RCD}^*(K,N)$ space with $K>0$ and $N\in (1,\infty)$, see \citep{cavalletti2017sharp}, implies that $\alpha_{\varepsilon}\geq KN/(N-1)$ for all $\varepsilon\geq 0$, and $\alpha_{\varepsilon}<\infty$ when $\varepsilon$ is small enough.
    A straightforward inspection shows that if $\alpha_{\varepsilon}$ has finite value, then $\lambda_{\varepsilon}=0$.
    In this case, by using Theorem \ref{thm:var_general} and Corollary \ref{corollary:var_log_transform}, we obtain that the weak Sobolev inequality admits Lipschitz and positive extremal functions which are further non-constant if $\varepsilon > 0$.
    Furthermore, we derive that the equation $-\Delta f=|\nabla f|^2 +\alpha_{\varepsilon}f$ admits non-constant, Lipschitz and positive weak solutions if $\varepsilon> 0$.
\end{remark}

We first prove all assertions of Theorem \ref{thm:var_general} but positivity.
While the existence result follows classical methods in \citep{rothaus1981logarithmic} using variational techniques together with the compact Sobolev embedding, the proofs of the boundedness and Lipschitz regularity are different.
Our methods are based on the ultracontractive property of the resolvent and the local regularity result of the Poisson equation on a ball in Lemma \ref{lemma:Poisson_regularity_Zhu}.
In the following proofs, we denote by $C>0$ a universal constant which may vary from one step to the next.
\begin{proof}[First part of the proof of Theorem \ref{thm:var_general}]
    \begin{enumerate}[label=\textbf{\textsc{Step \arabic*:}}, fullwidth]
        \item We show the existence of non-negative extremal functions of variational problem \eqref{eq:var_pbm_general}.
            Let $\mathcal{F}:=\{f\in W^{1,2}: \|f\|_2=1\}$ and $F:\mathcal{F}\rightarrow \mathbb{R}$ be the log Sobolev functional in \eqref{eq:var_pbm_general}, that is,
            \begin{equation}
                F(f):=F_{\alpha_1,\alpha_2}(f)
                =
                \int_{X}\left(|\nabla f|^2 + \alpha_1 f^2 - \frac{\alpha_2}{2} f^2\log f^2\right) dm, \quad \text{for }f\in \mathcal{F}.
            \end{equation}
            We claim that $F$ is coercive on $\mathcal{F}$.
            Indeed, choosing $0<\delta<N/(N-2)$, since $\|f\|_2=1$, by Jensen's inequality, it follows that
            \begin{equation}
                \int f^2 \log f^2 dm
                =
                \frac{1}{\delta}\int \log |f|^{2\delta}(f^2 dm)
                \leq
                \frac{1+\delta}{\delta}\log \|f\|^2_{2+2\delta}.
            \end{equation}
            Note that the Sobolev inequality, see Lemma \ref{lemma:Pre_Sobolev_embedding}, implies that
            \begin{equation}
                \|f\|^2_{2+2\delta}
                \leq
                \|f\|^2_{2^*}
                \leq
                \|f\|^2_2 + A\cdot\mathrm{Ch}(f)
                \leq
                C(1+\mathrm{Ch}(f)),
            \end{equation}
            and therefore
            \begin{equation}
                F(f)
                \geq
                2\mathrm{Ch}(f) + \alpha_1 - \frac{(1+\delta)\alpha_2}{2\delta}\log C\left(1+ \mathrm{Ch}(f)\right),
            \end{equation}
            which implies that $F\rightarrow \infty$ for $f$ in $\mathcal{F}$ with $\|f\|_{W^{1,2}}\rightarrow \infty$.
            Let $(f_n)\subseteq \mathcal{F}$ be a minimizing sequence which can be assumed non-negative since $\mathrm{Ch}(|f|)\leq \mathrm{Ch}(f)$.
            By the coercivity of $F$, it follows that $(f_n)$ is bounded in $W^{1,2}$.
            The compact Sobolev embedding implies the existence of a non\--negative $u$ in $W^{1,2}\cap \mathcal{F}$ and a subsequence of $(f_n)$, relabelled as $(f_n)$, such that $f_n\rightarrow u$ strongly in $L^q$ for all $q$ in $[1,2^*)$.
            By the $L^2$-lower semicontinuity of the Cheeger energy as well as the $L^{2+\delta}$-continuity of $\int f^2\log f^2$ for small $\delta>0$,\footnote{Which can be shown from mean value theorem and equality $f^2\log f^2 -g^2\log g^2=2(|f|-|g|)(1+\log \theta^2)\theta$ where $\theta$ is between $|f|$ and $|g|$, see \textcolor{blue}{\citep[page 112]{rothaus1981logarithmic}}} it follows that $F$ reaches its minimum at $u$.
            Furthermore, if $\alpha_1\neq \lambda$, a straightforward inspection shows that $u\equiv 1$ cannot be an extremal function.

        \item Given any non-negative extremal function $u$, we show that $u\in D(\Delta)$ and satisfies \eqref{eq:var_pbm_pde} by using the classical Euler\--Lagrangian method.
            Let $\phi\in \mathrm{Lip}_{bs}$ be an arbitrary Lipschitz function and define the functional $G$ as follows
            \begin{equation}
                G(t,\beta)
                =
                F(u+t\phi) + \beta \left(\|u+t\phi\|_2^2 -1\right).
            \end{equation}
            The mean value theorem yields
            \begin{equation}
                \left|u^2\log u^2 - (u+t\phi)^2\log (u+t\phi)^2\right|
                \leq
                2t|\phi|\left|(1+\log \theta^2)\theta \right|
            \end{equation}
            with a function $\theta$ taking values between $|u|$ and $|u+t\phi|$.
            From the inequality $|\theta\log \theta|\leq \max(1/e, C(\delta)\theta^{1+\delta})$ for some small $\delta>0$ together with the dominated convergence of Lebesgue, it follows that
            \begin{equation}
                0
                =
                \frac{1}{2}\frac{dG(t, \beta)}{dt}\Big|_{t=0}
                =
                \int \left(\langle \nabla u, \nabla \phi \rangle  + \alpha_1 u \phi -  \frac{\alpha_2}{2} u\phi \log u^2 - \alpha_2 u\phi + \beta u \phi \right) dm. 
            \end{equation}
            Since $\mathrm{Lip}_{bs}$ is dense in $W^{1,2}$ and $u\log u$ is in $L^2$ by the Sobolev inequality, it follows that
            \begin{equation}\label{eq:var_pbm_2}
                \int \langle \nabla u, \nabla \phi \rangle dm
                =
                \int \left(\frac{\alpha_2}{2} \phi u \log u^2  + (\alpha_2-\alpha_1-\beta)\phi u\right) dm, \quad \text{for all } \phi \in W^{1,2}.
            \end{equation}
            Plugging $\phi = u$ into \eqref{eq:var_pbm_2} and using the real value of the log Sobolev constant $\lambda$, it follows that $\alpha_2-\beta=\lambda$.
            By the definition of the Laplacian we deduce that $u$ is in $D(\Delta)$ and that $-\Delta u=\alpha_2 u\log u +(\lambda-\alpha_1)u$.
        \item We show that $u$ is in $\mathrm{Lip}$.
            We start by showing that $u$ is in $L^{\infty}$.
            Let $\beta>0$. 
            Note that using the resolvent of the Laplacian, \eqref{eq:var_pbm_pde} can be rewritten as
            \begin{equation}\label{eq:var_pbm_3_resolvent}
                u
                =
                \left(\beta I - \Delta\right)^{-1}\left( \alpha_2u\log u + (\beta+\lambda -\alpha_1)u\right)
                =
                R_{\beta}\left(\alpha_2u\log u + \alpha_3 u\right),
            \end{equation}
            where $I$ is the identity map and $R_{\beta}$ is the resolvent of the Laplacian and $\alpha_3:=\beta+\lambda-\alpha_1$.
            Note that in order to prove that $u$ is in $L^{\infty}$, it is sufficient to show that $g:=\alpha_2u\log u + \alpha_3u$ is in $L^r$ for some $r>N/2$, according to Lemma \ref{lemma:ultracontra_resolvent}.
            By the Sobolev inequality, we know that $u$ is in $L^{2^*}$, which implies that $g$ is in $L^{r}$ for all $r\in (2,2^*)$, (recall that $2^\ast = 2N/(N-2)$).
            Fix $\delta\geq 2N$.
            It implies that $2<r_1<2^*$ for $1/r_1=1/2^*+1/\delta$.
            If $r_1>N/2$, then Lemma \ref{lemma:ultracontra_resolvent} and the identity \eqref{eq:var_pbm_3_resolvent} implies that $u\in L^{\infty}$.
            If $r_1\leq N/2$, then Lemma \ref{lemma:ultracontra_resolvent} implies that $u\in L^r$ for all $r<r_1N/(N-2r_1)$.
            Repeating the previous step, we can obtain that $g\in L^{r_2}$ for some $r_2<r_1N/(N-2r_1)$ such that
            \begin{equation}\label{eq:var_pbm_add_1}
                \frac{1}{r_2}
                =
                \frac{1}{r_1} - \frac{2}{N} + \frac{1}{\delta}
                =
                \frac{1}{2^*}-\frac{2}{N} + \frac{2}{\delta}.
            \end{equation}
            Define by induction $1/r_k:=1/2^*-2(k-1)/N+k/\delta$ if $r_{k-1}\leq N/2$ which by definition of $\delta$ implies that after finitely many iterations $g$ is in $L^{r_k}$ for some $r_k>N/2$ and deduce that $u$ is in $L^{\infty}$.

            We now show that $u$ is actually in $\mathrm{Lip}$.
            Let $x_0$ be any point in $X$ and $B_R=B(x_0, R)$ be an open ball with $0<R<\mathrm{diam}(X)/3$ and $g:=\alpha_2 u\log u + (\lambda-\alpha_1)u$.
            Since $u$ is in $D(\Delta)$ and the identity \eqref{eq:var_pbm_pde}, it follows by the definition that the restriction of $u$ on $B_R$, which we still denote by $u$, belongs to $W^{1,2}(B_R)$ as well as $D(\Delta,B_R)$ and that $\Delta_{B_R}u=-g$ holds on $B_R$ in the distributional sense.
            Hence, by Lemma \ref{lemma:Poisson_regularity_Zhu}, it follows that $|\nabla u|\in L^{\infty}_{loc}(B_R)$ and
            \begin{equation}
                \| |\nabla u|\|_{L^{\infty}(B_{R/2})}
                \leq
                C(N,K,R)\left(\frac{1}{m(B_R)}\|u\|_{L^2} + \|g\|_{L^{\infty}}\right).
            \end{equation}
            However, $X$ is compact and $m(B_R)>0$ by the doubling property, implying that $|\nabla u|$ belongs to $L^{\infty}$.
            The Sobolev-to-Lipschitz property thus implies that $u$ has a Lipschitz representation ending the proof of Theorem \ref{thm:var_general} but positivity.
    \end{enumerate}
\end{proof}
As for the last assertion of Theorem \ref{thm:var_general}, the positivity of non-negative extremal functions, we address the following auxiliary lemma stating that any non-negative extremal function vanishing at one point must also vanish in a neighborhood of that point.
Our approach is based on a maximum principle type result for the De Giorgi class proved in \citep{kinnunen2001regularity}.
\begin{lemma}\label{lemma:strict_positive}
    Suppose that the hypothesis of Theorem \ref{thm:var_general} holds.
    Let $u$ be any non-negative extremal function of variational problem \eqref{eq:var_pbm_general}.
    Assume that $u(x_0)=0$ for some $x_0$ in $X$, then $u\equiv 0$ on a neighborhood of $x_0$.
\end{lemma}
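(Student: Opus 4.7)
The plan is to reduce the question to a small ball around $x_0$ and then invoke a maximum principle for De Giorgi class functions from \citep{kinnunen2001regularity}. Using the Lipschitz continuity of $u$ (Theorem \ref{thm:var_general}\textit{(ii)}) together with $\alpha_2\log s+(\lambda-\alpha_1)\to-\infty$ as $s\to 0^+$, I would first pick $\eta_0\in(0,1/e)$ small enough that $g(s):=\alpha_2 s\log s+(\lambda-\alpha_1)s\le 0$ on $[0,\eta_0]$ and $-g$ is non-decreasing there (the latter is seen from $(-g)'(s)=-\alpha_2(\log s+1)-(\lambda-\alpha_1)\to+\infty$ as $s\to 0^+$), and then take $R_0>0$ with $u\le\eta_0$ on $B(x_0,R_0)$ by continuity of $u$.

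Next, on a ball $B(x_0,R)$ with $R\le R_0$ and a Lipschitz cutoff $\eta\in\text{Lip}_c(B(x_0,R))$ satisfying $\eta\equiv 1$ on $B(x_0,\rho)$, $|\nabla\eta|\le 2/(R-\rho)$, the plan is to test the Euler--Lagrange identity \eqref{eq:var_pbm_pde}, $\int\langle\nabla u,\nabla\phi\rangle\,dm=\int g(u)\phi\,dm$, with $\phi:=\eta^2(k-u)_+$ for any $0<k\le\eta_0$. Expanding the left-hand side by the Leibniz and chain rules (noting $\nabla u=-\nabla(k-u)_+$ on $\{u<k\}$), absorbing one gradient term via Young's inequality, and using $-g(u)\ge 0$ together with monotonicity of $-g$ on $\{u<k\}\subset\{u\le\eta_0\}$ to bound the remaining nonlinear contribution, I expect
\begin{equation*}
\int_{B_\rho}|\nabla(k-u)_+|^2\,dm\;\le\;\frac{16}{(R-\rho)^2}\int_{B_R}(k-u)_+^2\,dm\;+\;2(-g)(k)\,k\,m(B_R\cap\{u<k\}).
\end{equation*}
This places $u$ in the lower De Giorgi class on $B(x_0,R_0)$, with a lower-order coefficient $(-g)(k)\,k=O(k^2|\log k|)$ that vanishes as $k\to 0^+$.

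Since $(X,d,m)$ is globally doubling and supports a weak $(1,2)$-Poincar\'e inequality (Section~2), the De Giorgi iteration/quasi-minimizer machinery of \citep{kinnunen2001regularity} is available and supplies a strong minimum principle for members of the above De Giorgi class: a non-negative function satisfying such Caccioppoli inequalities and attaining the essential infimum $0$ at an interior point must vanish on a whole ball around that point. Applied to $u$ at $x_0$---using that the lower-order coefficient stays under control uniformly as $k\downarrow 0$---this yields $u\equiv 0$ on a neighborhood of $x_0$ and closes the proof.

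The main obstacle is the genuinely nonlinear forcing $g(u)$, whose logarithmic behaviour near $0$ rules out treating it as a harmless linear perturbation. The essential structural observation is the favourable sign of $g$ on $\{u\le\eta_0\}$---a consequence of $\log s\to-\infty$---which makes the nonlinear contribution in the Caccioppoli inequality a small lower-order term vanishing as $k\to 0^+$. This encodes, in the metric setting, the Osgood-type condition driving V\'azquez's classical strong maximum principle, and is precisely what allows the Kinnunen--Shanmugalingam De Giorgi class theory to deliver the local vanishing in the absence of a smooth structure to invoke Hopf-type or polar-decomposition arguments.
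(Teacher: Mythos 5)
Your overall strategy coincides with the paper's: localize to a ball $B(x_0,r_0)$ where the Lipschitz continuity of $u$ and the sign of $g(s)=\alpha_2 s\log s+(\lambda-\alpha_1)s$ near $0$ give $g(u)\le 0$, derive a Caccioppoli inequality for sub-level sets of $u$ by testing the Euler--Lagrange identity, and then invoke the De Giorgi class machinery of Kinnunen--Shanmugalingam. The localization step and the choice of test function are fine, and using $\eta^2$ instead of $\eta$ even spares you the hole-filling lemma the paper needs.

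The gap is in the last step. After testing with $\eta^2(k-u)_+$ you are left with the forcing contribution $\int(-g(u))\eta^2(k-u)_+\,dm\ge 0$ on the \emph{right-hand} side, which you bound by $2(-g)(k)\,k\,m(B_R\cap\{u<k\})$. This places $-u$ not in the class $\text{DG}_2$ as defined in \citep{kinnunen2001regularity} (which has no inhomogeneous term) but in a De Giorgi class with a level-dependent inhomogeneity $\chi(k)^2 m(A(k,R))$, $\chi(k)^2\sim 2\alpha_2 k^2|\log k|$. Your claim that this ``stays under control uniformly as $k\downarrow 0$'' is the crux and is not justified: measured against the homogeneous term $\int_{A(k,R)}(k-u)_+^2\le k^2 m(A(k,R))$, the ratio is $|\log k|\to\infty$, so the term is supercritical at the natural scaling, and a weak Harnack/expansion-of-positivity estimate for such a class comes with an additive error of order $\chi(\tau)R\sim\tau\sqrt{|\log\tau|}\,R$ that swamps the level $\tau$ unless $R$ is shrunk with $\tau$. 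This is precisely the borderline Osgood situation you identify in your last paragraph, and it needs an actual quantitative argument (e.g.\ an iteration over levels and shrinking radii), not a citation to results stated for the clean class. The paper avoids the issue entirely: it argues that the forcing term can be discarded by its sign, so that $-u$ lands in $\text{DG}_2(B(x_0,r_0))$ with \emph{no} lower-order term, and then runs a contradiction argument --- Lipschitz continuity plus the doubling property give $m(\{u<\tau/2\}\cap B(x_0,R))\le\gamma\,m(B(x_0,R))$ with $\gamma<1$ for $\tau=u(x)>0$ at some nearby $x$, and \citep[Lemma 6.2]{kinnunen2001regularity} applied at the single level $\tau/2$ yields $u(x_0)\ge\bar\lambda\tau/2>0$. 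To repair your write-up along the paper's lines you would need to show that the forcing term can indeed be absorbed or discarded so that the clean Caccioppoli inequality holds, and then replace the black-box ``strong minimum principle'' by the explicit density estimate plus Lemma 6.2 argument.
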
 
\begin{proof}
    From the first part of the proof of Theorem \ref{thm:var_general}, any non-negative extremal function of \eqref{thm:var_general} is Lipschitz continuous.
    Furthermore, since $\alpha_2 >0$ and $\lambda$ is finite, the function $g(t):=\alpha_2 t\log t + (\lambda-\alpha_1)t\leq 0$ for all small enough $t>0$.
    Hence we can find $r_0>0$ such that $g(u)\leq 0$ on $B(x_0,r_0)$.
    We first claim that $-u$ is of De Giorgi class $\mathrm{DG}_2(B(x_0,r_0))$.
    In other terms, there exists $C>0$ such that for all $k$ in $\mathbb{R}$ and $z$ in $B(x_0,r_0)$ and all $0<\rho< R\leq \mathrm{diam}(X)/3$ with $B(z,R)\subseteq B(x_0,r_0)$, it holds that
    \begin{equation}
        \int_{A_z(k,\rho)}|\nabla u|^2 dm
        \leq \frac{C}{(R-\rho)^2}\int_{A_z(k,R)}(-u-k)^2dm,
    \end{equation}
    where $A_z(k,r):=\{x\in B(z,r): -u(x)>k\}$.
    In the following, $C$ denotes a positive constant, which is independent of the choice of $z,\rho, r, R, k$, and may vary from line to line.
    Let $\eta$ be a Lipschitz cut-off function such that $\eta=1$ on $B(z,\rho)$ and $\mathrm{supp}(\eta)\subseteq B(z,R)$ with $|\nabla \eta|\leq C/(R-\rho)$ for some $C>0$.
    Taking $\phi=\eta (-u-k)_{+}$ as test function for \eqref{eq:var_pbm_pde}, it follows that
    \begin{equation}\label{eq:var_pos_1}
        \int \langle \nabla(\eta(-u-k)_+), \nabla u \rangle dm
        =
        \int \eta (-u-k)_{+}\left(\alpha_2 u\log u + (\lambda-\alpha_1)u\right)dm.
    \end{equation}
    As for the left-hand side of \eqref{eq:var_pos_1}, the Leibniz rule yields
    \begin{multline}\label{eq:var_pos_2}
        \int \langle \nabla(\eta(-u-k)_{+}), \nabla u \rangle dm
        \geq
        \int_{\{-u>k\}} \eta |\nabla u|^2 dm - \int_{\{-u>k\}} (-u-k)_{+} |\nabla \eta||\nabla u | dm\\
        \geq
        \int_{A_z(k,\rho)}|\nabla u|^2 dm - \frac{C}{(R-\rho)^2}\int_{A_z(k,R)}(-u-k)^2_{+}dm - \int_{A_z(k,R)\setminus A_z(k,\rho)}C|\nabla u|^2dm,
    \end{multline}
    where we use the locality of the minimal weak upper gradient $|\nabla(-u-k)|=|\nabla u|$ for the first inequality, and Young's inequality together with $|\nabla \eta|\leq C/(R-\rho)$ and $|\nabla \eta|=0$ on $B(z,\rho)$ for the second inequality.
    As for the right hand side of \eqref{eq:var_pos_1}, by the very choice of $B(x_0,r_0)$ and the non-negativity of $u$, it follows that $\alpha_2 u\log u + (\lambda-\alpha_1)u\leq 0$.
    Hence
    \begin{equation}\label{eq:var_pos_3}
        \int \eta(-u-k)_{+}\left(\alpha_2u\log u + (\lambda-\alpha_1)u\right) dm
        \leq 0.
    \end{equation}
    With \eqref{eq:var_pos_2} and \eqref{eq:var_pos_3}, we obtain that
    \begin{equation}
        \int_{A_z(k,\rho)}|\nabla u|^2 dm
        \leq
        \frac{C}{(R-\rho)^2}\int_{A_z(k,R)} (-u-k)^2_{+}dm + C\int_{A_z(k,R)\setminus A_z(k,\rho)}|\nabla u|^2 dm.
    \end{equation}
    Adding $C\int_{A_z(k,\rho)}|\nabla u|^2dm$ and then dividing by $(1+C)$ on both sides, it follows that
    \begin{equation}
        \int_{A_z(k,\rho)}|\nabla u|^2dm
        \leq
        \frac{C}{(R-\rho)^2}\int_{A_z(k,R)}(-u-k)^2_{+}dm + \theta \int_{A_z(k,R)}|\nabla u|^2 dm,
    \end{equation}
    where $\theta=C/(1+C)\in (0,1)$.
    Applying the same argument to $0<\rho<r\leq R$ and enlarging the domain of integral, we derive that for all $0<\rho<r\leq R$,
    \begin{equation}
        \int_{A_z(k,\rho)}|\nabla u|^2dm
        \leq
        \frac{C}{(r-\rho)^2}\int_{A_z(k,R)}(-u-k)^2_{+}dm + \theta \int_{A_z(k,r)}|\nabla u|^2 dm.
    \end{equation}
    By \citep[Lemma 3.2]{kinnunen2001regularity} (see also \citep[Lemma 3.1 in p. 161]{mariano1983}) with $f(r):=\int_{A_z(k,r)}|\nabla u|^2dm$, we obtain that
    \begin{equation}
        \int_{A_{z}(k,\rho)}|\nabla u|^2 dm
        \leq
        \frac{C}{(R-\rho)^2}\int_{A_z(k,R)}(-u-k)^2_{+}dm,
    \end{equation}
    which implies that $-u$ is of De Giorgi class $\mathrm{DG}_2(B(x_0,r_0))$.

    Note that by the assumption that $X$ is a compact $\mathrm{RCD}^*(K,N)$ space with $K\in \mathbb{R}$ and $N\in (2,\infty)$, it follows that $X$ is global doubling and supports the global weak $(1,1)$-Poincaré inequality, see \citep{rajala2012interpolated}.
    Together with Hölder's inequality, $X$ supports the global weak $(1,q)$-Poincare inequality for any $q$ in $(1,2)$.
    Since the minimal weak upper gradient in the $\mathrm{RCD}$ setting coincides with the minimal weak upper gradient in the Newtonian setting, see \cite{ambrosio2013density}, it follows that the $\mathrm{RCD}^*(K,N)$ space together with that $u\geq 0$ and $-u\in \mathrm{DG}_2(B(x_0,r_0))$ satisfies the assumptions of \citep[Lemma 6.1 and Lemma 6.2]{kinnunen2001regularity}.
    
    We now prove the assertion by contradiction. 
    Suppose that the assertion of our Lemma does not hold.
    Let then $0<R<r_0$ and $x$ in $B(x_0,R)$ and $\tau:=u(x)>0$ be fixed.
    By the Lipschitz continuity of $u$, we can find $0<r\leq R$ with $B(x,r)\subseteq B(x_0,R)$ such that $u\geq\tau/2$ on $B(x, r)$.
    The generalized Bishop\--Gromov inequality yields\footnote{For this fact, see \cite[Theorem 2.14]{nobili2022rigidity} or \cite[Corollary 30.12]{Villani_book_2008}}
    \begin{equation}
        \frac{m(B(x,r))}{m(B(x_0,R))}
        \geq
        C(K,N,R)\left(\frac{r}{R}\right)^N,
    \end{equation}
    for some constant $C(K,N,R)>0$ depending only on $K,N,R$.
    Taking $r$ even smaller if necessary, we can assume that $0<C(K,N,R)(r/R)^N<1$.
    Hence it follows that
    \begin{equation}\label{eq:var_pos_final}
        m\left(\left\{z\in B(x_0,R):  u(z)\geq \tau/2\right\}\right)
        \geq
        m\left(B(x,r)\right)
        \geq
        C(K,N,R)\left(\frac{r}{R}\right)^N m(B(x_0,R)).
    \end{equation}
    Taking $\gamma:=1-C(K,N,R)(r/R)^N$, the inequality \eqref{eq:var_pos_final} implies that
    \begin{equation}
        m\left(\left\{x\in B(x_0,R):  u< \tau/2\right\}\right)
        \leq
        \gamma \cdot m(B(x_0,R)).
    \end{equation}
    Since $0<\gamma <1$, \citep[Lemma 6.2]{kinnunen2001regularity} yields the existence of $\bar{\lambda}=\bar{\lambda}(\gamma)>0$ such that
    \begin{equation}
        u(x_0)=\inf_{B(x_0, R/2)}u \geq \frac{\bar{\lambda} \tau}{2}>0,
    \end{equation}
    which is a contradiction.
\end{proof}
We can now address the positivity assertion of Theorem \ref{thm:var_general}.
\begin{proof}[Final part of the proof of Theorem \ref{thm:var_general}]
    \begin{enumerate}[label=\textbf{\textsc{Step \arabic*:}}, fullwidth]
        \setcounter{enumi}{3}
        \item We show the positivity of non-negative extremal functions.
            From the continuity of $u$, the set $A = \{x\colon u(x)=0\}$ is closed.
            By contradiction, suppose that $A$ is non-empty. 
            By Lemma \ref{lemma:strict_positive}, it follows that $A$ is also open.
            Since $X$ is connected, it follows that $A = X$ and therefore $u\equiv 0$.
            This however contradicts the fact that $\|u\|_2 =1$.
    \end{enumerate}
\end{proof}
Finally we address the proof of Corollary \ref{corollary:var_log_transform}.
\begin{proof}[Proof of Corollary \ref{corollary:var_log_transform}]
    Let $\phi(t)=\log t,t>0$ and $u$ be an arbitrary non-negative extremal function of the variational problem \eqref{eq:var_pbm_general}. 
    By Theorem \ref{thm:var_general}, we know that $0<c\leq u \leq C$ for some some positive constants $c$ and $C$ and that $u$ is Lipschitz.
    Since $\phi$ is a $C^2$-function with bounded first and second derivatives on $[c,C]$, it follows by the chain rule that $v:=\phi(u)$ is Lipschitz, $v$ is in $D(\Delta)$ and
    \begin{equation}
        \Delta(\phi(u))
        =
        \phi'(u)\Delta u + \phi''(u)|\nabla u|^2
        =
        \frac{1}{u}\Delta u - \frac{1}{u^2}|\nabla u|^2.
    \end{equation}
    By \eqref{eq:var_pbm_pde} we get
    \begin{equation}
        -\Delta v = |\nabla v|^2 + \alpha_2 v +(\lambda-\alpha_1).
    \end{equation}
    Taking $\tilde{v}=v + (\lambda-\alpha_1)/\alpha_2$.
    By locality of the minimal weak upper gradient and the Laplacian, we obtain that $\tilde{v}\in D(\Delta)$ satisfies $-\Delta \tilde{v}=|\nabla \tilde{v}|^2 + \alpha_{2}\tilde{v}$.
    If $\lambda\neq\alpha_1$, then Theorem \ref{thm:var_general} implies that $u$ is non-constant, which also implies that $\tilde{v}$ is non-constant.
    Since $\tilde{v}$ satisfies \eqref{eq:var_log_transform_pde}, we obtain the result.
\end{proof}

\section{Li-Yau type inequality for logarithmic extremal functions}
In this section, we derive a Li-Yau type estimate for the Lipschitz solutions of the equation $-\Delta v=|\nabla v|^2 + \alpha v$, whose existence is guaranteed by Corollary \ref{corollary:var_log_transform}.
In particular, based on the regularity and positivity results obtained in the previous section, this Li-Yau estimate holds for any logarithmic transform of non-negative extremal functions of \eqref{eq:var_pbm_general}.

\begin{theorem}\label{thm:estimate_Li_Yau}
    Let $(X,d,m)$ be a compact $\mathrm{RCD}^*(K,N)$ space with $K\in \mathbb{R}$ and $N$ in $(2,\infty)$.
    Let $v\in \mathrm{Lip}\cap D(\Delta)$ such that $-\Delta v=|\nabla v|^2 + \alpha v$ for some $\alpha>0$. 
    Then, for all $0< \beta <1$ it holds
    \begin{equation}\label{eq:Li_Yau_result_1}
        |\nabla v|^2 + (\alpha -\beta K)v
        \leq
        \frac{N\alpha(1-\beta)}{4\beta}\left(1-\frac{\beta((2-\beta)K-\alpha)}{2\alpha(1-\beta)}\right)^2,\quad \text{$m$-a.e.}
    \end{equation}
\end{theorem}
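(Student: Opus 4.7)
The plan is to transfer W. Wang's smooth Riemannian argument to the non-smooth $\text{RCD}^*(K,N)$ setting by combining the weak Bochner inequality built into the definition of $\text{RCD}^*(K,N)$ with an Omori--Yau type maximum principle. Set $h := |\nabla v|^2$ and $F := h + (\alpha - \beta K) v$. Since $v \in \text{Lip} \cap D(\Delta)$ and $D(\Delta) \subseteq H^{2,2}$, the function $h$ lies in $W^{1,2}$, and the PDE yields $\Delta v = -(h+\alpha v) \in \text{Lip} \subseteq W^{1,2}$, so $v$ is admissible in the weak Bochner inequality. Substituting $\Delta v = -(h+\alpha v)$ and $\langle \nabla v, \nabla \Delta v\rangle = -\langle \nabla v, \nabla h\rangle - \alpha h$ into \eqref{eq:weak_Bochner}, and then rewriting with $\Delta h = \Delta F + (\alpha-\beta K)(h+\alpha v)$ and $\langle \nabla v,\nabla h\rangle = \langle \nabla v,\nabla F\rangle - (\alpha-\beta K)h$, produces the weak differential inequality
\begin{equation*}
\Delta F + 2\langle \nabla v, \nabla F\rangle \geq \frac{2}{N}(h + \alpha v)^2 + 2K(1-\beta) h - (\alpha - \beta K)(h + \alpha v),
\end{equation*}
valid when tested against any admissible non-negative function.

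The second step is to convert this weak differential inequality into a pointwise estimate at the supremum of $F$. I would apply the Omori--Yau type maximum principle (the proper $\text{RCD}(K,\infty)$ version proved in the appendix) to produce a sequence $(x_n) \subseteq X$ with $F(x_n) \to \sup_X F$ along which the gradient and negative-Laplacian correction terms vanish in the appropriate weak sense. Because $F$ may change sign, one works instead with $F + \phi$, where $\phi$ is an auxiliary function built from the Mondino--Naber ``good cut-off'' of \cite[Lemma 3.1]{mondino2019structure}, chosen so that $\phi \in D(\Delta)$ with $\Delta \phi \in L^\infty$ (not merely measure-valued), ensuring compatibility with the admissible test-function class of the weak Bochner inequality. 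The Sobolev-to-Lip property of $\text{RCD}^*(K,N)$ then identifies the pointwise and essential suprema of $F$, and passing to the limit along $(x_n)$ collapses the differential inequality to the purely algebraic inequality
\begin{equation*}
(\alpha - \beta K)(h + \alpha v) \geq \frac{2}{N}(h + \alpha v)^2 + 2K(1-\beta) h \qquad \text{at the maximum of } F.
\end{equation*}

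The final step is purely algebraic. Setting $s := h + \alpha v$ and inverting the relation $F = s - \beta K v$ gives $v = (s - F_{\max})/(\beta K)$ and $h = [\alpha F_{\max} + (\beta K - \alpha)s]/(\beta K)$; substituting these turns the inequality into $F_{\max} \leq A s - B s^2$ with $A = (\alpha - \beta K)(2-\beta)/(2\alpha(1-\beta))$ and $B = \beta/(N\alpha(1-\beta))$. Optimizing the right-hand side over $s$ yields $F_{\max} \leq A^2/(4B) = N(2-\beta)^2(\alpha - \beta K)^2/[16\alpha\beta(1-\beta)]$, which is exactly the claimed upper bound once one observes the identity $(2-\beta)(\alpha-\beta K) = 2\alpha(1-\beta) - \beta((2-\beta)K - \alpha)$. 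The main obstacle is the non-smooth analysis in the second step: one must rigorously control the auxiliary $\phi$ so that the weak Bochner inequality and the Omori--Yau principle fit together with vanishing error terms in the limit, which is precisely why the $L^2$-Laplacian Mondino--Naber cut-offs are chosen over the distance-function construction used in \cite{zhang2016local}.
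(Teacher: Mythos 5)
Your strategy coincides with the paper's: combine the weak Bochner inequality to get a lower bound on $\bm{\Delta}^{ac}g$ where $g = |\nabla v|^2 + (\alpha - \beta K)v$, then apply the Omori--Yau principle with Mondino--Naber cut-offs to handle the possible lack of a strict interior maximum, and finish with an algebraic optimization. Your differential inequality matches \eqref{eq:est_Bochner_ineq_AC} after unwinding the square, and your optimization over $s = h + \alpha v$ reproduces the paper's bound --- the paper completes the square twice (first in $g$, then in the auxiliary $h_v$) whereas you parametrize and optimize, but both yield $\frac{N(2-\beta)^2(\alpha-\beta K)^2}{16\alpha\beta(1-\beta)}$.

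Two points worth noting. First, a small technical error: you assert $\Delta v = -(|\nabla v|^2 + \alpha v) \in \text{Lip}$, which is false in general since $|\nabla v|^2$ is not Lipschitz ($\text{Hess}\,v$ is only in $L^2$, not $L^\infty$). What you actually need is $\Delta v \in W^{1,2}$, which follows from the paper's Step 1 argument via the inclusion $D(\Delta) \subseteq H^{2,2}$ and the bound $|\nabla|\nabla v|^2| \leq 2|\text{Hess}\,v|_{HS}|\nabla v|$; the conclusion (admissibility of $v$ in weak Bochner) is correct, the justification you offer is not. Second, a genuine structural difference: you propose an \emph{additive} perturbation $F + \phi$ where the paper uses the \emph{multiplicative} $G_\varepsilon = \phi_\varepsilon \cdot g$ with $\phi_\varepsilon = 1 - \varepsilon + \varepsilon\psi$. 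Your additive version does work, and is arguably cleaner --- with $\phi_\varepsilon = \varepsilon(\psi - 1)$ one simply has $\bm{\Delta}^{ac}(g + \phi_\varepsilon) = \bm{\Delta}^{ac}g + \Delta\phi_\varepsilon$ and the drift $w = 2v$ needs no modification, whereas the multiplicative route forces the paper to take $w_\varepsilon = 2v - 2\log\phi_\varepsilon$ and track terms like $G_\varepsilon/\phi_\varepsilon$. Either way the error terms are $O(\varepsilon)$ via \eqref{eq:Naber_original_cut_off}. You should also make explicit, as the paper does in its Step 1, that $\bm{\Delta}^s(|\nabla v|^2) \geq 0$ --- this is the hypothesis that unlocks Lemma \ref{lemma:maximum_principle}, and it comes from \citep[Proposition 6.2.16]{gigli2020lectures} applied to \eqref{eq:est_2}; it is not automatic from the Bochner inequality alone and deserves a sentence in a complete write-up.
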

\begin{corollary}\label{coro:Li_Yau_extremal}
    Let $u$ be any non-negative extremal function of \eqref{eq:var_pbm_general} with the log-Sobolev constant $\lambda(\alpha_1,\alpha_2)$ and $\alpha_2>0$.
    Then $v:=\log u + (\lambda-\alpha_1)/\alpha_2$ satisfies the Li-Yau type estimate \eqref{eq:Li_Yau_result_1} with $\alpha=\alpha_2$.
    Moreover, if $K>0$ and $0<\alpha_2\leq K$, then any non-negative extremal function is constant.
\end{corollary}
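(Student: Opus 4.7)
The plan is to reduce both parts of the corollary to results already established in the excerpt, with the key input for the constancy being a judicious choice of the free parameter $\beta\in(0,1)$ in the Li-Yau estimate \eqref{eq:Li_Yau_result_1}.

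The first assertion is immediate: by Corollary \ref{corollary:var_log_transform}, the function $v:=\log u+(\lambda-\alpha_1)/\alpha_2$ is Lipschitz, belongs to $D(\Delta)$, and satisfies $-\Delta v=|\nabla v|^2+\alpha_2 v$. These are precisely the hypotheses of Theorem \ref{thm:estimate_Li_Yau} with $\alpha=\alpha_2$, so \eqref{eq:Li_Yau_result_1} applies to this $v$ verbatim.

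For the constancy statement in the range $0<\alpha_2\le K$, the plan is to make both sides of \eqref{eq:Li_Yau_result_1} vanish simultaneously by taking $\beta=\alpha_2/K$. In the strict case $\alpha_2<K$ this lies in $(0,1)$: the coefficient $(\alpha_2-\beta K)$ is zero, so the left-hand side collapses to $|\nabla v|^2$, while a short computation gives
\begin{equation*}
\frac{\beta((2-\beta)K-\alpha_2)}{2\alpha_2(1-\beta)}=\frac{(\alpha_2/K)\cdot 2(K-\alpha_2)}{2\alpha_2(K-\alpha_2)/K}=1,
\end{equation*}
so the square $(1-\cdot)^2$ on the right vanishes and the whole right-hand side is zero. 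Hence $|\nabla v|^2\le 0$ $m$-almost everywhere, which forces $v$ (and therefore $u$) to be constant by the Lipschitz regularity already known from Theorem \ref{thm:var_general}.

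For the borderline case $\alpha_2=K$, the choice $\beta=1$ is no longer admissible, and the plan is a simple limiting argument along $\beta_n=1-1/n\to 1^-$. Substituting $\alpha_2=K$, the left-hand side of \eqref{eq:Li_Yau_result_1} becomes $|\nabla v|^2+(K/n)\,v$, while a direct calculation shows that the right-hand side is $O(1/n)$ (the factor $(1-\beta_n)$ in the prefactor drives it to zero while the bracketed square tends to $1/4$). Since Theorem \ref{thm:var_general} provides two-sided bounds $0<c\le u\le C$, so that $v$ is bounded, the term $(K/n)v$ converges to zero uniformly; passing to the limit $n\to\infty$ yields $|\nabla v|^2\le 0$ $m$-a.e.\ and again constancy of $u$. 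The only genuine obstacle is the algebraic verification of the collapse at $\beta=\alpha_2/K$ together with the $O(1/n)$ behaviour in the borderline case, both of which are short and pose no serious difficulty given that the heavy lifting (existence and Lipschitz positivity of $u$, and the Li-Yau estimate itself) has been carried out earlier.
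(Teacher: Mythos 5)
Your proof is correct and follows essentially the same strategy as the paper's proof: apply Corollary~\ref{corollary:var_log_transform} to obtain $-\Delta v = |\nabla v|^2 + \alpha_2 v$ for $v\in \text{Lip}\cap D(\Delta)$, invoke Theorem~\ref{thm:estimate_Li_Yau} verbatim with $\alpha=\alpha_2$, and then drive $\beta$ to $\alpha_2/K$ so that both sides of \eqref{eq:Li_Yau_result_1} collapse to zero. The paper handles the whole range $0<\alpha_2\le K$ in one stroke by letting $\beta\nearrow\alpha_2/K$ (using boundedness of $v$ to kill the $(\alpha_2-\beta K)v$ term in the limit), whereas you split the strict case (plugging $\beta=\alpha_2/K\in(0,1)$ directly, which is admissible) from the borderline case $\alpha_2=K$ (where $\beta_n\to 1^-$ is genuinely a limit). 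The two presentations are equivalent, and your algebraic verification that the bracket equals $1$ at $\beta=\alpha_2/K$, as well as the $O(1/n)$ estimate at the borderline, are both correct. The only small gloss is the final deduction: from $|\nabla v|^2=0$ $m$-a.e., constancy of $v$ does not follow from Lipschitz regularity alone; the paper invokes the Sobolev-to-Lipschitz property of $\text{RCD}$ spaces (so $v$ has a Lipschitz representative with $\text{Lip}(v)=\||\nabla v|\|_\infty=0$ on the connected space $X$, hence constant). You should cite that property explicitly rather than attribute the conclusion to ``Lipschitz regularity already known from Theorem~\ref{thm:var_general}.''
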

The proof of Theorem \ref{thm:estimate_Li_Yau} is divided into three parts: In the first step, we show the regularity for $|\nabla v|^2+(\alpha-\beta Kv)$ using the weak Bochner inequality \eqref{eq:weak_Bochner}.
In the second step, following the similar computation arguments as in \cite{wang1999harnack} we derive a lower bound of the absolutely continuous part of $\bm{\Delta}(|\nabla v|^2 +(\alpha-\beta Kv))$, where all inequalities are understood in the $m$-a.e. sense.
In the last step, we make use of a slightly generalized Omori-Yau type maximum principle proved in Appendix \ref{appendix} together with a ``good'' cut-off function inspired by \cite{mondino2019structure} to derive the desired Li\--Yau estimate.

\begin{proof}[Proof of Theorem \ref{thm:estimate_Li_Yau}]
    Recall that $\mathrm{Test}^{\infty}:=\{f\in \mathrm{Lip}\cap D(\Delta)\cap L^{\infty}: \Delta f\in W^{1,2}\cap L^{\infty}\}$ and $\mathrm{Test}^{\infty}_{+}:=\{f\in \mathrm{Test}^{\infty}: f\geq 0 \text{ $m$-a.e. on $X$}\}$. 
    \begin{enumerate}[label=\textbf{\textsc{Step \arabic*:}}, fullwidth]
        \item We claim that $|\nabla v|^2$ is in $W^{1,2}\cap L^{\infty}$ and $|\nabla v|^2$ is in $D(\bm{\Delta})$ with $\bm{\Delta}^{s}(|\nabla v|^2)\geq 0$.
            Indeed, by the assumption that $v$ is in $D(\Delta)$, it follows that $v$ belongs to $ H^{2,2}$ and
            \begin{equation}
                |\nabla |\nabla v|^2|
                \leq
                2 |\mathrm{Hess} v|_{HS}|\nabla v|,
            \end{equation}
            with $|\mathrm{Hess}v|_{HS}\in L^2$.
            By the fact that $|\nabla v|$ is in $L^{\infty}$, we obtain that $|\nabla v|^2$ is also in $W^{1,2}$.
            We now show that $|\nabla v|^2\in D(\bm{\Delta})$.
            For any $\phi\in \mathrm{Test}^{\infty}_{+}$, by the weak Bochner inequality \eqref{eq:weak_Bochner}, it follows that
            \begin{equation}\label{eq:est_2}
                \int_X |\nabla v|^2 \Delta \phi dm
                \geq
                2\int_X \phi \left( \frac{(\Delta v)^2}{N} + \langle \nabla v, \nabla \Delta v \rangle + K |\nabla v|^2\right) d m
                =
                \int_X \phi d\mu,
            \end{equation}
            where $\mu=2((\Delta v)^2/N +\langle \nabla v, \nabla \Delta v\rangle + K|\nabla v|^2)m$.
            By the standard regularization via the mollified heat flow, see \citep[Corollary 6.2.17]{gigli2020lectures}, the inequality \eqref{eq:est_2} holds for all $\phi\in \mathrm{Lip}_{bs}^+$.
            Then by \citep[Proposition 6.2.16]{gigli2020lectures}, it follows that $|\nabla v|^2\in D(\bm{\Delta})$ and that
            \begin{equation}\label{eq:est_3}
                \bm{\Delta}\left(|\nabla v|^2\right)
                \geq
                2\left( \frac{(\Delta v)^2}{N} + \langle \nabla v, \nabla \Delta v\rangle + K|\nabla v|^2 \right)\cdot m.
            \end{equation}
            In particular we obtain that $\bm{\Delta}^{s}(|\nabla v|^2)\geq 0$.
        \item We provide a lower bound for $(\bm{\Delta}^{ac}g)$ based on the inequality \eqref{eq:est_3} where $g:=|\nabla v|^2 +(\alpha - \beta K)v$ for $0<\beta<1$.
            First note that since $-\Delta v=|\nabla v|^2 +\alpha v$, it follows that $-\Delta v = g + \beta K v$.
            Hence, from the first step, we get that $g$ is in $D(\bm{\Delta})$ and $\bm{\Delta}^{s}g=\bm{\Delta}^{s}(|\nabla v|^2)\geq 0$.
            Then inequality \eqref{eq:est_3} implies that
            \begin{multline}\label{eq:est_4}
                \bm{\Delta}^{ac}g
                =
                \bm{\Delta}^{ac}(|\nabla v|^2 +(\alpha - \beta K)v)\\
                \geq
                2\frac{(\Delta v)^2}{N} + 2\langle \nabla v, \nabla \Delta v\rangle + 2K|\nabla v|^2 + (\alpha - \beta K)\Delta v.
            \end{multline}
            Plugging $\Delta v=-g-\beta K v$ into \eqref{eq:est_4}, it follows that
            \begin{multline}\label{eq:Li_Yau_est_add_1}
                \bm{\Delta}^{ac}g
                \geq
                2\frac{(g+\beta K v)^2}{N} - 2\langle \nabla v, \nabla (g + \beta K v) \rangle + 2K|\nabla v|^2  - (\alpha - \beta K)(g+\beta K v)\\
                =
                \frac{2}{N}\left(g^2 + 2\beta K g v + \beta^2K^2 v^2\right) -2\beta K |\nabla v|^2 - 2\langle \nabla v, \nabla g\rangle + 2K|\nabla v|^2\\
                - (\alpha - \beta K)g - \beta K (\alpha-\beta K)v.
            \end{multline}
            Plugging identity $|\nabla v|^2= g-(\alpha-\beta K)v$ into the right hand of \eqref{eq:Li_Yau_est_add_1}, it follows that
            \begin{multline}\label{eq:Li_Yau_add_0}
                \bm{\Delta}^{ac}g
                \geq
                \frac{2}{N}g^2 + \left(\frac{4\beta K}{N}v + (2-\beta)K - \alpha\right)g
                + \frac{2\beta^2K^2}{N}v^2 \\
                -K(2-\beta)(\alpha-\beta K)v - 2\langle \nabla v, \nabla g \rangle\\
                =
                \frac{2}{N}\left[ g + \left(\beta K v + \frac{N[(2-\beta)K-\alpha]}{4}\right)\right]^2 - \frac{2}{N}\left( \beta K v + \frac{N[(2-\beta)K-\alpha]}{4}\right)^2\\
                + \frac{2\beta^2K^2}{N}v^2 -K(2-\beta)(\alpha-\beta K)v - 2\langle \nabla v, \nabla g \rangle.
            \end{multline}
            For $a=N((2-\beta)K-\alpha)$ and $b=N\alpha(1-\beta)/\beta$, inequality \eqref{eq:Li_Yau_add_0} simplifies to
            \begin{equation}\label{eq:est_Bochner_ineq_AC}
                \bm{\Delta}^{ac}g
                \geq
                \frac{2}{N}(g + \beta K v + \frac{a}{4})^2 - \frac{2}{N}(K\beta bv) - \frac{2}{N}\frac{a^2}{16} - 2\langle \nabla v, \nabla g\rangle.
            \end{equation}
        \item We show that the assumptions of Lemma \ref{lemma:maximum_principle} are valid for $g$ based on the estimate \eqref{eq:est_Bochner_ineq_AC}.
            Let $D:=\mathrm{diam}(X)$ and fix $0<R< D/4$.
            Since $X$ is compact and $g\in L^{\infty}$, we can find $x_0$ in $X$ such that:
            \begin{equation}
                M_1:=\esssup_{B(x_0,R)}g = \esssup_{X}g
            \end{equation}
            Define
            \begin{equation*}
                M_2 := \esssup_{X \setminus B(x_0, R)} g \leq M_1
            \end{equation*}
            By our choice of $R$ and the doubling property of $X$, we have $m(B(x_0,R))>0$ and $m(X\setminus B(x_0,R))>0$.
            Without loss of generality, we assume that $M_1>0$, otherwise nothing needs to be shown.
            We consider different cases for $M_1$ and $M_2$. 
            \begin{enumerate}[fullwidth]
                \item[Case 1:]$M_1>M_2$.
                    By the regularity result in Step 1, we can apply Lemma \ref{lemma:maximum_principle} to $g$.
                    Hence taking $w=2v$ which is in $W^{1,2}\cap \mathrm{Lip}_b$, it follows from Lemma \ref{lemma:maximum_principle} that we can find a sequence $(x_j)\subseteq X$ such that $g(x_j)>M_1 - 1/j$ and
                    \begin{equation}\label{eq:Li_Yau_add_2}
                        \bm{\Delta}^{ac}g(x_j)+\langle \nabla g, \nabla w \rangle(x_j)\leq 1/j.
                    \end{equation} 
                    Plugging \eqref{eq:est_Bochner_ineq_AC} into \eqref{eq:Li_Yau_add_2} and letting $h_{v,j}= (K\beta bv+a^2/16 + N/(2j))^{1/2}(x_j)$, it follows that
                    \begin{multline}
                        g(x_j)
                        \leq
                        -\beta K v(x_j) - \frac{a}{4} + h_{v,j}\\
                        =
                        -\frac{1}{b}h_{v,j}^2 + h_{v,j} + \frac{a^2}{16b} - \frac{a}{4} + \frac{N}{2bj}\\
                        =
                        -\frac{1}{b}\left(h_{v,j}-\frac{b}{2}\right)^2 + \frac{b}{4}\left(1-\frac{a}{2b}\right)^2 + \frac{N}{2bj}.
                    \end{multline}
                    For $j\rightarrow \infty$ together with $g(x_j)>M_1-1/j$ and $b>0$, we obtain that
                    \begin{equation}
                        \esssup_{X}g
                        \leq
                        \frac{b}{4}\left(1-\frac{a}{2b}\right)^2.
                    \end{equation}

                \item[Case 2:]$M_1=M_2$.
                    Let $\varepsilon\in (0,1/2)$.
                    By \citep[Lemma 3.1]{mondino2019structure}, we can find a Lipschitz cut-off function $\psi:X\rightarrow \mathbb{R}$ with $\psi\in D(\Delta)$ such that $0\leq \psi\leq 1$ and $\psi\equiv 1$ on $B(x_0,R)$ and $\mathrm{supp}(\psi)\subseteq B(x_0,2R)$, and 
                    \begin{equation}\label{eq:Naber_original_cut_off}
                        R^2|\Delta \psi| + R|\nabla \psi|
                        \leq C,
                    \end{equation}
                    where $C>0$ is a constant depending only on $K,N$ and $R$.
                    Let $\phi_{\varepsilon}:X\rightarrow \mathbb{R}$ be defined as $\phi_{\varepsilon}:=1-\varepsilon + \varepsilon \psi$ and let $G_{\varepsilon}:=\phi_{\varepsilon}\cdot g$.
                    Note that since $\phi_{\varepsilon}\in D(\Delta)$, it holds that $\bm{\Delta}\phi_{\varepsilon}=\Delta \phi_{\varepsilon}\cdot m$.
                    Together with the fact that $\phi_{\varepsilon}$ is continuous, by the Leibniz rule for the measure-valued Laplacian, it follows that
                    \begin{equation}
                        \bm{\Delta}(G_{\varepsilon})
                        =
                        \phi_{\varepsilon} \bm{\Delta}g + g\Delta\phi_{\varepsilon}\cdot m + 2\langle \nabla\phi_{\varepsilon}, \nabla g \rangle\cdot m. 
                    \end{equation}
                    
                    Together with the result in the first step of this proof that $\bm{\Delta}^{s}g\geq 0$, we deduce that
                    \begin{equation}
                        \bm{\Delta}^{s}(G_{\varepsilon})
                        =
                        \phi_{\varepsilon}\bm{\Delta}^{s}g
                        \geq 0.
                    \end{equation}
                    Furthermore, from \eqref{eq:Naber_original_cut_off}, it follows that
                    \begin{equation}\label{eq:Naber_cut_off}
                        \frac{|\nabla \phi_{\varepsilon}|^2}{\phi_{\varepsilon}}
                        \leq
                        \varepsilon^2\frac{C}{R^2}
                        \quad \text{and} \quad
                        |\Delta \phi_{\varepsilon}|
                        \leq \varepsilon \frac{C}{R^2}.
                    \end{equation}
                    Denote by $H_v$ the right side of \eqref{eq:est_Bochner_ineq_AC} without the last term $-2\langle \nabla v, \nabla g\rangle$.
                    Then, by using $g=G_{\varepsilon}/\phi_{\varepsilon}$ and the inequality \eqref{eq:est_Bochner_ineq_AC}, it follows that
                    \begin{multline}
                        \bm{\Delta}^{ac}(G_{\varepsilon})
                        =
                        \phi_{\varepsilon}\bm{\Delta}^{ac}g + \frac{G_{\varepsilon}}{\phi_{\varepsilon}}\Delta \phi_{\varepsilon} + 2\left\langle \nabla\phi_{\varepsilon}, \nabla\left(\frac{G_{\varepsilon}}{\phi_{\varepsilon}}\right)\right\rangle\\
                        \geq
                        \phi_{\varepsilon}\left(H_v - 2\left\langle \nabla v, \nabla g\right\rangle\right)
                        +2\left\langle \nabla \phi_{\varepsilon}, \nabla G_{\varepsilon}\right\rangle/\phi_{\varepsilon} 
                        +\frac{G_{\varepsilon}}{\phi_{\varepsilon}}\left( \Delta \phi_{\varepsilon}-2\frac{|\nabla \phi_{\varepsilon}|^2}{\phi_{\varepsilon}}\right).
                    \end{multline}
                    Using the estimate \eqref{eq:Naber_cut_off} with $\varepsilon^2<\varepsilon$, it follows that
                    \begin{multline}\label{eq:Li_Yau_add_3}
                        \bm{\Delta}^{ac}(G_{\varepsilon})
                        \geq
                        \phi_{\varepsilon}H_v -2\phi_{\varepsilon}\left( \left\langle \nabla v, \frac{\nabla G_{\varepsilon}}{\phi_{\varepsilon}}\right\rangle - \left\langle \nabla v, \frac{G_{\varepsilon}\nabla \phi_{\varepsilon}}{\phi_{\varepsilon}^2}\right\rangle\right)\\
                        + 2\left\langle \nabla \phi_{\varepsilon}, \nabla G_{\varepsilon}\right\rangle/\phi_{\varepsilon} - \varepsilon\|g\|_{\infty}\frac{C}{R^2}\\
                        \geq
                        \phi_{\varepsilon}H_v - 2\left\langle \nabla (v - \log \phi_{\varepsilon}), \nabla G_{\varepsilon} \right\rangle + 2\frac{G_{\varepsilon}}{\phi_{\varepsilon}}\langle \nabla v, \nabla \phi_{\varepsilon}\rangle - \varepsilon\|g\|_{\infty}\frac{C}{R^2}\\
                        \geq
                        \phi_{\varepsilon}H_v - 2\langle \nabla(v-\log \phi_{\varepsilon}), \nabla G_{\varepsilon}\rangle - \varepsilon\|g\|_{\infty}\| |\nabla v|\|_{\infty}\frac{C}{R} - \varepsilon\|g\|_{\infty}\frac{C}{R^2}.
                    \end{multline}
                    Since $\esssup_{X\setminus B(x_0,2R)}g \leq \esssup_{X\setminus B(x_0,R)}g<M_1$, by the definition of $\phi_{\varepsilon}$, it follows that
                    \begin{equation}
                        \esssup_{X\setminus B(x_0,2R)}G_{\varepsilon}
                        \leq
                        (1-\varepsilon)M_1
                        <
                        M_1
                        =
                        \esssup_{B(x_0,R)}G_{\varepsilon}
                        =
                        \esssup_{B(x_0,2R)}G_{\varepsilon}.
                    \end{equation}
                    The doubling property and $R<D/4$ implies that $m(B(x_0,2R))>0$ as well as $m(X\setminus B(x_0,2R))>0$, which together with the fact that $\bm{\Delta}^{s}(G_{\varepsilon})\geq 0$ and $G_{\varepsilon}\in W^{1,2}\cap \mathrm{Lip}_b$, allows us to apply Lemma \ref{lemma:maximum_principle} to $G_{\varepsilon}$.
                    Taking $w_{\varepsilon}=2v - 2\log \phi_{\varepsilon}\in W^{1,2}\cap \mathrm{Lip}_b$, by Lemma \ref{lemma:maximum_principle}, it follows that there exists a sequence $(x_{j})\subseteq X$ such that $G_{\varepsilon}(x_{j})>\esssup_{X}G_{\varepsilon}-1/j$ and
                    \begin{equation}
                        \bm{\Delta}^{ac}G_{\varepsilon}(x_{j}) + \langle \nabla G_{\varepsilon}, \nabla w_{\varepsilon}\rangle(x_{j})
                        \leq 1/j.
                    \end{equation}
                    Plugging \eqref{eq:Li_Yau_add_3}, we obtain that
                    \begin{equation}
                        \phi_{\varepsilon}(x_j)H_v(x_j)
                        \leq
                        \frac{1}{j} + \varepsilon C_1,
                    \end{equation}
                    where $C_1=C(K,N,R)\|g\|_{\infty}\left( \||\nabla f|\|_{\infty}/R + 1/R^2\right)$.
                    Following the similar argument as in Case 1 and noting that $b>0$, we obtain that
                    \begin{equation}\label{eq:Li_Yau_add_4}
                        g(x_j)
                        \leq
                        \frac{b}{4}\left(1-\frac{a}{2b}\right)^2 + \frac{N(j^{-1}+\varepsilon C_1)}{2b \phi_{\varepsilon}(x_j)}.
                    \end{equation}
                    Since $\esssup_{X}G_{\varepsilon}=\esssup_{X}g=M_1$, so multiplying $\phi_{\varepsilon}(x_j)$ on both side of \eqref{eq:Li_Yau_add_4} and letting $j\rightarrow \infty$, it follows that
                    \begin{equation}\label{eq:est_6}
                        \esssup_{X}g
                        \leq
                        \frac{b}{4}\left(1-\frac{a}{2b}\right)^2 + \frac{\varepsilon NC_1}{2b}.
                    \end{equation}
                    The inequality \eqref{eq:est_6} holding for any $0<\varepsilon <1/2$, we send $\varepsilon$ to $0$ to obtain
                    \begin{equation}
                        \esssup_{X}g
                        \leq
                        \frac{b}{4}\left(1-\frac{a}{2b}\right)^2.
                    \end{equation}
            \end{enumerate}
            With Case 1 and Case 2, together with the definitions of $a$ and $b$, we obtain the inequality \eqref{eq:Li_Yau_result_1}.
    \end{enumerate}
\end{proof}

\begin{proof}[Proof of Corollary \ref{coro:Li_Yau_extremal}]
    By Theorem \ref{thm:var_general} and Corollary \ref{corollary:var_log_transform}, we know that $w:=\log u$ is in $\mathrm{Lip}\cap D(\Delta)$ and satisfies $-\Delta w=|\nabla w|^2 +\alpha_2 w + \lambda -\alpha_1$.
    Then by the definition of the Laplacian and locality of the minimal weak upper gradient, it follows that $v=w+(\lambda-\alpha_1)/\alpha_2$ is in $\mathrm{Lip}\cap D(\Delta)$ and satisfies the equation $-\Delta v=|\nabla v|^2 + \alpha_2 v$.
    Hence, by Theorem \ref{thm:estimate_Li_Yau}, the Li\--Yau estimate \eqref{eq:Li_Yau_result_1} holds for $v$.
    For the second claim, let $C(\beta)$ denote the term in the right-hand side of \eqref{eq:Li_Yau_result_1}.
    If $\alpha_2\in (0,K]$, then taking $\beta\in (0,\alpha_2/K)$.
    One can check that $C(\beta)$ goes to $0$ as $\beta\nearrow \alpha_2/K$.
    Hence, taking $\beta\nearrow \alpha_2/K$ on the both sides of \eqref{eq:Li_Yau_result_1}, it follows that
    \begin{equation*}
        \left|\nabla v\right|^2 = 0,\quad \text{$m$-a.e.}
    \end{equation*}
    Then the Sobolev-to-Lip property implies that $u$ is constant.
\end{proof}

\section{Applications}
In this section, we present applications of the regularity and positivity results in Theorem \ref{thm:var_general} and the Li-Yau type estimate in Theorem \ref{thm:estimate_Li_Yau}, which generalize the results in \citep{wang1999harnack} on the smooth Riemannian manifold to the $\mathrm{RCD}^*(K,N)$ spaces.

For the notational simplicity, we define the non-negative constants $C_1$ and $C_2$ as follows:
\begin{equation}
    C_1(\beta):=\alpha - \beta K
    \quad\text{and}\quad
    C_2(\beta):= \frac{N\alpha(1-\beta)}{4\beta}\left(1-\frac{\beta((2-\beta)K-\alpha)}{2\alpha(1-\beta)}\right)^2,
\end{equation}
where $0<\beta<1$.
Note that $C_1$ is positive whenever $\alpha>\max\{K,0\}$.

The first direct consequence is a Harnack type inequality for the non-negative extremal functions.
\begin{corollary}\label{coro:app_1}
    Let $(X,d,m)$ be a compact $\mathrm{RCD}^*(K,N)$ space with $K\in \mathbb{R}$ and $N$ in $(2,\infty)$.
    Suppose that $v\in \mathrm{Lip}$ satisfies $-\Delta v=|\nabla v|^2 +\alpha v$ for some $\alpha>\max\{K,0\}$.
    Then for any $x,y\in X$, it holds that
    \begin{equation}
        e^{v(x)}
        \leq
        e^{(1-\varepsilon)v(y)}\exp\left(\frac{C_1(\beta) d^2(x,y)}{4\varepsilon} + \frac{\varepsilon C_2(\beta)}{C_1(\beta)}\right),
    \end{equation}
    for any $0<\varepsilon<1$ and $\beta\in (0,1)$.
\end{corollary}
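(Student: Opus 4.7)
The plan is to convert the pointwise Li-Yau bound from Theorem \ref{thm:estimate_Li_Yau} into a one-dimensional ODE inequality along a minimizing geodesic from $y$ to $x$, and then to apply a weighted Young inequality to separate the length contribution from the value $v(y)$. Write $L:=d(x,y)$, $C_1:=C_1(\beta)$, $C_2:=C_2(\beta)$. By Theorem \ref{thm:estimate_Li_Yau}, $|\nabla v|^2\leq C_2-C_1 v$ $m$-a.e., which in particular forces $v\leq C_2/C_1$ pointwise on $X$ by continuity of $v$. Since $(X,d,m)$ is a proper geodesic RCD space, I may pick a unit-speed minimizing geodesic $\gamma\colon[0,L]\to X$ with $\gamma(0)=y$ and $\gamma(L)=x$.

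Next, I set $\phi(s):=v(\gamma(s))$, which is Lipschitz on $[0,L]$. Using that in RCD spaces $|\nabla v|$ coincides with the minimal weak upper gradient and $\text{lip}(v)=|\nabla v|$ $m$-a.e.\ for Lipschitz $v$, the chain rule for Lipschitz functions composed with Lipschitz curves yields $\phi'(s)\leq|\nabla v|(\gamma(s))$ for a.e.\ $s\in[0,L]$. Plugging in the Li-Yau bound gives $\phi'(s)\leq\sqrt{C_2-C_1\phi(s)}$. Setting $\psi(s):=\sqrt{C_2-C_1\phi(s)}\geq 0$, this rearranges to $\psi'(s)\geq -C_1/2$ for a.e.\ $s$, and integration yields
\begin{equation*}
\psi(L)\geq\psi(0)-\frac{C_1 L}{2}.
\end{equation*}

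I would then split into two cases. If $\psi(0)\geq C_1 L/2$, both sides above are non-negative and squaring gives $v(x)\leq v(y)+L\sqrt{C_2-C_1 v(y)}-C_1 L^2/4$. Applying Young's inequality $ab\leq a^2/(4\eta)+\eta b^2$ with $a=L$, $b=\sqrt{C_2-C_1 v(y)}$ and $\eta=\varepsilon/C_1$ gives
\begin{equation*}
L\sqrt{C_2-C_1 v(y)}\leq\frac{C_1 L^2}{4\varepsilon}+\frac{\varepsilon C_2}{C_1}-\varepsilon v(y),
\end{equation*}
and substitution yields $v(x)\leq(1-\varepsilon)v(y)+C_1 L^2(1-\varepsilon)/(4\varepsilon)+\varepsilon C_2/C_1\leq(1-\varepsilon)v(y)+C_1 L^2/(4\varepsilon)+\varepsilon C_2/C_1$. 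The complementary case $\psi(0)<C_1 L/2$, equivalent to $C_2/C_1-v(y)<C_1 L^2/4$, is immediate from $v(x)\leq C_2/C_1$ together with the elementary estimate $(1-\varepsilon)(C_2/C_1-v(y))<(1-\varepsilon)C_1 L^2/4\leq C_1 L^2/(4\varepsilon)$. Exponentiating produces the stated Harnack inequality.

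The main obstacle I anticipate is the rigorous justification of $\phi'(s)\leq|\nabla v|(\gamma(s))$ for a.e.\ $s$ along a \emph{single} geodesic, since the Li-Yau bound is only $m$-a.e.\ and the geodesic is $m$-null. In the RCD framework this is handled by combining the identification $\text{lip}(v)=|\nabla v|$ $m$-a.e., the Lipschitz chain rule along absolutely continuous curves, and a test-plan or Wasserstein-interpolation argument to lift $m$-a.e.\ bounds to $s$-a.e.\ bounds along almost every minimizing geodesic connecting prescribed endpoints. By continuity of $v$ the resulting Harnack inequality then holds for every pair $x,y\in X$.
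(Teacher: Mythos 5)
Your overall strategy is sound and your algebra checks out; in fact the paper itself notes, in a remark right after this corollary, that a ``direct'' proof along a single minimizing geodesic in the spirit of Wang's Riemannian argument is possible. But the decisive step --- passing from the $m$-a.e.\ bound $|\nabla v|\leq\sqrt{C_2-C_1v}$ to the bound $|\phi'(s)|\leq\sqrt{C_2-C_1\phi(s)}$ for a.e.\ $s$ along \emph{one fixed} geodesic --- is exactly where your proposal is incomplete, and the two fixes you name do not work as stated. First, the identity $\mathrm{lip}(v)=|\nabla v|$ holding $m$-a.e.\ transfers no information to an $m$-null curve; what is actually needed is the stronger conclusion that $\mathrm{lip}(v)(z)\leq\sqrt{C_2-C_1v(z)}$ for \emph{every} $z$, which one extracts from the $m$-a.e.\ bound, the continuity of the right-hand side, and a (local) Sobolev-to-Lipschitz argument on small balls; only then does the fact that $\mathrm{lip}(v)$ is a genuine upper gradient give the a.e.-in-$s$ inequality along an arbitrary curve. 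Second, there is no test plan ``connecting prescribed endpoints'': test plans require marginals with bounded compression, so Dirac masses at $x$ and $y$ are not admissible, and ``almost every minimizing geodesic from $y$ to $x$'' is not a notion a test plan provides.

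The paper closes this gap by the second route done properly: it takes $\mu_0,\mu_1$ to be the normalized restrictions of $m$ to $B(x_0,r)$ and $B(y_0,r)$, uses essential non-branching to obtain a $W_2$-geodesic with uniformly bounded densities and an associated test plan $\pi$ concentrated on geodesics, applies the upper-gradient inequality to $\pi$-a.e.\ curve (where the $m$-a.e.\ bound is legitimately available), runs the same completing-the-square/Young step you use, integrates against $\pi$, and finally lets $r\to0$ so that the plan converges to $\delta_{x_0}\times\delta_{y_0}$; continuity of $v$ and of $d$ then yields the pointwise statement. Incidentally, the paper avoids your ODE-and-two-cases detour: along each curve it simply compares the values at the argmax and argmin of $v\circ\gamma$, bounds the increment by $d(\gamma_0,\gamma_1)\sqrt{C_2-C_1v(\gamma_{s_2})}$, and completes the square once, with no case distinction. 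So your computation is fine, but to make the proof rigorous you must either (a) carry out the ball-interpolation limit as in the paper, or (b) genuinely establish the everywhere-pointwise bound on $\mathrm{lip}(v)$ via the Sobolev-to-Lipschitz property (or Cheeger's identification) before restricting to a single geodesic.
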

\begin{proof}
    First note that by Theorem \ref{thm:estimate_Li_Yau}, we have
    \begin{equation}\label{eq:application_1_1}
        |\nabla v |
        \leq
        \sqrt{C_2(\beta)-C_1(\beta)v},\quad \text{$m$-a.e.}
    \end{equation}
    Let $x_0,y_0\in X$ be arbitrary points in $X$ and let $\mu_0:=\frac{1}{m(B(x_0,r))}m|_{B(x_0,r)}$ and $\mu_1:=\frac{1}{m(B(y_0,r))}m|_{B(y_0,r)}$ for $0<r<\mathrm{diam}(X)/3$.
    By \citep[Corollary 1.2]{rajala2014non_branching}, $\mathrm{RCD}^*(K,N)$ spaces are essentially non-branching.
    Together with \cite[Corollary 5.3]{cavalletti2017optimal}, there exists a unique $W_2$-geodesic $(\mu_t)_{\{t\in [0,1]\}}$ joining $\mu_0$ and $\mu_1$ with $\mu_t\leq Cm$ for any $t$ in $[0,1]$ for some $C>0$, and a test plan $\pi$ in $\mathcal{P}(C([0,1];X))$ such that $(e_t)_{\sharp}\pi=\mu_t$ for any $t$ in $[0,1]$ and $\pi$ is concentrated on the set $\mathrm{Geo}(X)$ of geodesics on $X$.
    By the Fubini's theorem together with the fact that $\pi$ has bounded compression and the inequality \eqref{eq:application_1_1}, it follows that for $\pi$-a.s $\gamma\in \mathrm{Geo}(X)$,
    \begin{equation*}
        \left|\nabla v(\gamma_t)\right|\leq \sqrt{C_2(\beta)-C_1(\beta)v(\gamma_t)},\quad \text{for a.e. $t\in [0,1]$}.
    \end{equation*}
    Let $\gamma\in \mathrm{Geo}(X)$ be any such a geodesic.
    By the continuity of $v$, let $s_1$ and $s_2$ in $[0,1]$ be the maximum and minimum points respectively, that is
    \begin{equation}
        v(\gamma_{s_1})=\max_{s\in[0,1]}v(\gamma_s)
        \quad\text{and}\quad
        v(\gamma_{s_2})=\min_{s\in [0,1]}v(\gamma_s).
    \end{equation}
    Then by the definition of weak upper gradients (see \cite[Proposition 1.20]{gigli2020lectures}), it follows that
    \begin{multline}
        \left| v(\gamma_{s_1}) - v(\gamma_{s_2}) \right|
        \leq
        \int_{\min(s_1,s_2)}^{\max(s_1,s_2)}\left|\nabla v \right|(\gamma_t) \left|\dot{\gamma}_t \right|dt
        \leq
        d(\gamma_0,\gamma_1)\sqrt{C_2(\beta)-C_1(\beta)v(\gamma_{s_2})}.
    \end{multline}
    For $0<\varepsilon<1$, together with $v(\gamma_0)\leq v(\gamma_{s_1})$, it follows that
    \begin{equation}
        v(\gamma_0)
        \leq
        (1-\varepsilon)v(\gamma_{s_2}) + \varepsilon v(\gamma_{s_2}) + d(\gamma_0,\gamma_1)\sqrt{C_2(\beta)-C_1(\beta)v(\gamma_{s_2})}.
    \end{equation}
    Let $a=\sqrt{C_2(\beta)-C_1(\beta)v(\gamma_{s_2})}$. 
    Since $C_1(\beta)>0$, it follows that
    \begin{multline}\label{eq:app_coro1_1}
        v(\gamma_0)
        \leq
        (1-\varepsilon)v(\gamma_{s_2}) - \frac{\varepsilon}{C_1(\beta)}a^2 + d(\gamma_0,\gamma_1)a + \frac{\varepsilon C_2(\beta)}{C_1(\beta)}\\
        \leq
        (1-\varepsilon)v(\gamma_{1}) + \frac{C_1(\beta) d^2(\gamma_0,\gamma_1)}{4\varepsilon} + \frac{\varepsilon C_2(\beta)}{C_1(\beta)}.
    \end{multline}
    Integrating \eqref{eq:app_coro1_1} with respect to $\pi$ on the both sides and noting that $(e_t)_{\sharp}\pi=\mu_t$, it follows that
    \begin{equation}
        \int v(x)d\mu_0
        \leq
        \int \left( (1-\varepsilon)v(y) + \frac{C_1(\beta) d^2(x,y)}{4\varepsilon} + \frac{\varepsilon C_2(\beta)}{C_1(\beta)} \right) d\tilde{\pi}(x,y),
    \end{equation}
    where $\tilde{\pi}=(e_0,e_1)_{\sharp}\pi$ is the optimal transport plan between $\mu_0$ and $\mu_1$.
    Since $\mu_0$ and $\mu_1$ weakly converges to $\delta_{x_0}$ and $\delta_{y_0}$ in the duality of $C_b$ as $r\rightarrow 0$, it follows that, up to a subsequence, $\tilde{\pi}$ weakly converges to $\delta_{x_0}\times \delta_{y_0}$.
    Using the identity $u=\exp(v)$, we obtain that
    \begin{equation}
        u(x_0)
        \leq
        u(y_0)^{1-\varepsilon}\exp\left(\frac{C_1(\beta) d^2(x_0,y_0)}{4\varepsilon} + \frac{\varepsilon C_2(\beta)}{C_1(\beta)}\right).
    \end{equation}
\end{proof}
\begin{remark}
    Note that using the result proved in \cite{cheeger1999} that the minimal weak upper gradient of Lipschitz function coincides with its local Lipschitz slope in the complete doubling metric space supporting the weak $(1,p)$-Poincaré inequality for $p>1$, Corollary \ref{coro:app_1} can be shown directly following the lines of \citep[Corollary 2.2]{wang1999harnack} instead of using the optimal transport method.
\end{remark}

The next corollary addresses the estimates on the upper and lower bounds of $v$ based on the dimension-free Harnack inequality on the $\mathrm{RCD}(K,\infty)$ space in \citep{huaiqian2016dimension}.
The proof of which is essentially the same one as in the Riemannian case from \citep[Corollary 2.4]{wang1999harnack}.
For the sake of completeness, we provide the proof.
\begin{corollary}\label{coro:app_2}
    Let $(X,d,m)$ be a compact $\mathrm{RCD}^*(K,N)$ space with $K\in \mathbb{R}$ and $N$ in $(2,\infty)$.
    Then for any non-negative extremal function $u$ of \eqref{eq:var_pbm_general} with the log Sobolev constant $\lambda(\alpha_1,\alpha_2)$ with $\alpha_2>\max\{K,0\}$, it holds that
    \begin{align}
        &\sup_{X}\log u \leq \frac{\lambda-\alpha_1}{\alpha_2} + C_3(\alpha_2,K) \mathrm{diam}(X)^2,\\
        &\inf_{X}\log u
        \geq
        - \frac{\lambda-\alpha_1}{\alpha_2} - 2C_3(\alpha_2, K) \mathrm{diam}(X)^2
    \end{align}
    where $C_3(\alpha_2,K)=\alpha_2$ when $K\geq 0$ and $C_3(\alpha_2,K)=K/(\exp(K/\alpha_2)-1)$ when $K<0$.
    In particular, it holds that $|\nabla \log u|^2 \leq C_2(\beta)+2 C_1(\beta)C_3(\alpha_2,K) \mathrm{diam}(X)^2$ $m$-a.e. for any $0<\beta<1$.
\end{corollary}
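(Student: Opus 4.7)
The plan is to feed the Li--Yau estimate of Theorem \ref{thm:estimate_Li_Yau}, applied to $v := \log u + (\lambda-\alpha_1)/\alpha_2$, into a Lipschitz-type argument in the spirit of Wang's Riemannian proof. By Corollary \ref{corollary:var_log_transform}, $v$ lies in $\mathrm{Lip}\cap D(\Delta)$ and satisfies $-\Delta v = |\nabla v|^2 + \alpha_2 v$, so Theorem \ref{thm:estimate_Li_Yau} gives $|\nabla v|^2 + C_1(\beta) v \leq C_2(\beta)$ $m$-a.e.\ for every $\beta \in (0,1)$. By the chain rule together with the saturated estimate, $\phi_\beta := \sqrt{C_2(\beta) - C_1(\beta) v}$ satisfies $|\nabla \phi_\beta| \leq C_1(\beta)/2$ $m$-a.e., and the Sobolev-to-Lip property upgrades this to an honest $(C_1(\beta)/2)$-Lipschitz bound for $\phi_\beta$ on all of $X$.

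The next step is to anchor this Lipschitz bound to concrete values of $v$. Testing $-\Delta v = |\nabla v|^2 + \alpha_2 v$ against the constant function (using that $\int_X \Delta v\, dm = 0$ since $\Delta v \in L^\infty$ and $m$ is a probability measure) gives $\alpha_2 \int_X v\,dm = -\int_X |\nabla v|^2\,dm \leq 0$, hence there is some $y_0 \in X$ with $v(y_0) \leq 0$. Dually, $\|u\|_{L^2}=1$ together with $m(X)=1$ forces $\sup_X u \geq 1$, so $\sup_X \log u \geq 0$; combined with the chain-rule identity $\Delta e^v = e^v(\Delta v + |\nabla v|^2) = -\alpha_2 v\, e^v$ (which would make $e^v$ strictly subharmonic on compact $X$ if $v<0$ everywhere, a contradiction), one gets $v(x_0) \geq 0$ for some $x_0 \in X$.

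Inserting the pairs $(y_0, x_\ast)$ and $(x_0, y_\ast)$, where $x_\ast, y_\ast \in X$ realise $\sup_X v$ and $\inf_X v$, into the Lipschitz bound for $\phi_\beta$ and squaring (under the mild non-degeneracy condition $\sqrt{C_2(\beta)} \geq C_1(\beta)D/2$, with $D := \mathrm{diam}(X)$) yields
\begin{equation*}
\sup_X v \leq D\sqrt{C_2(\beta)} - \tfrac{1}{4}C_1(\beta) D^2, \qquad \inf_X v \geq -D\sqrt{C_2(\beta)} - \tfrac{1}{4}C_1(\beta) D^2.
\end{equation*}
Translating via $\log u = v - (\lambda-\alpha_1)/\alpha_2$ and optimising over $\beta \in (0,1)$, exactly in the manner of Wang \cite[Corollary 2.4]{wang1999harnack}, produces the two claimed pointwise bounds on $\log u$ with the explicit constants $\alpha_2 D^2$ and $\tfrac{27}{16}\alpha_2 D^2$.

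Finally, the gradient bound is immediate by re-using Li--Yau: $|\nabla \log u|^2 = |\nabla v|^2 \leq C_2(\beta) - C_1(\beta) v$, and substituting the just-obtained lower bound on $v$ (equivalent to the lower bound on $\log u$) gives $|\nabla \log u|^2 \leq C_2(\beta) + \tfrac{27}{16} C_1(\beta) \alpha_2 D^2$ $m$-a.e. The only substantive difficulty lies in the $\beta$-optimisation step producing the precise numerical constants $1$ and $\tfrac{27}{16}$; all the remaining ingredients -- Li--Yau, chain-rule Lipschitz, Sobolev-to-Lip, and elementary integration by parts -- are either available from the earlier sections or standard in the $\mathrm{RCD}^\ast(K,N)$ framework.
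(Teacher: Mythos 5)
Your strategy diverges structurally from the paper's, and the divergence is fatal to obtaining the stated constants. The paper proves Corollary \ref{coro:app_2} via the dimension-free Harnack inequality for the heat semigroup from \citep[Theorem 3.1]{huaiqian2016dimension}, the evolution identity $P_t u - P_s u = \int_s^t P_\tau \Delta u\,d\tau = -\int_s^t P_\tau(\alpha_2 u\log u + (\lambda-\alpha_1)u)\,d\tau$, Gronwall's inequality, and then the particular choices $p=2,\ t=1/(2\alpha_2)$ for the upper bound and $p=3,\ t=1/(3\alpha_2)$ for the lower bound; the constants $\alpha_2 D^2$ and $\tfrac{27}{16}\alpha_2 D^2$ are outputs of exactly those choices. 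You have also misread the pointer: the Li--Yau/Lipschitz route you pursue is the argument of \citep[Corollary 2.2]{wang1999harnack}, which underlies the Harnack estimate in Corollary \ref{coro:app_1} here, whereas the present statement follows the heat-semigroup argument of \citep[Corollary 2.4]{wang1999harnack}, not a $\beta$-optimisation.

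Here is the concrete gap. Your Lipschitz bound $|\nabla\phi_\beta|\leq C_1(\beta)/2$ together with $v(y_0)\leq 0\leq v(x_0)$ does yield $\sup_X v \leq D\sqrt{C_2(\beta)} - \tfrac{1}{4} C_1(\beta)D^2$ and the matching lower bound. Translating by $\log u = v - (\lambda-\alpha_1)/\alpha_2$ gives
\begin{equation*}
\sup_X \log u \ \leq\ D\sqrt{C_2(\beta)} - \tfrac{1}{4} C_1(\beta)D^2 + \frac{\alpha_1-\lambda}{\alpha_2},
\end{equation*}
and $\alpha_1-\lambda\geq 0$ (test $f\equiv 1$ in the log-Sobolev functional). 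The corollary asserts $\sup_X\log u \leq -\tfrac{\alpha_1-\lambda}{\alpha_2} + \alpha_2 D^2$; the $\lambda$-correction carries the \emph{opposite sign}, so your estimate cannot imply it: for $\lambda$ far below $\alpha_1$ the target is arbitrarily negative while your left-hand side does not decrease with $\lambda$ at all, since the Li--Yau estimate has no $\lambda$-dependence. Independently of the sign issue, no choice of $\beta$ converts $D\sqrt{C_2(\beta)} - \tfrac{1}{4}C_1(\beta)D^2$ into the $K,N$-free coefficient $\alpha_2 D^2$, because $C_2(\beta)$ carries explicit $K,N$-dependence and blows up as $\beta\to 0^+$ and $\beta\to 1^-$; the minimiser produces a different, $K,N$-dependent constant. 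Your closing paragraph --- that Li--Yau plus the lower bound on $\log u$ gives the gradient estimate --- is a correct deduction and is exactly how the paper reads off the final line, but it presupposes the lower bound you have not actually established.
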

\begin{proof}
    Let $D:=\mathrm{diam}(X)$ and $x$ and $y$ be the maximum and minimum point of $u$ on $X$, respectively, the existence of which is guaranteed by the regularity of non-negative extremal functions proved in Theorem \ref{thm:var_general}.
    
    As for the upper bound, firstly, by the dimension-free Harnack inequality on $\mathrm{RCD}(K,\infty)$ spaces shown in \citep[Theorem 3.1]{huaiqian2016dimension} for $p>1$, it follows that
    \begin{equation}\label{eq:coro_app2_1}
        \left(P_t u(x)\right)^p
        \leq
        P_t(u^p)(z) \exp\left(\frac{pKd^2(x,z)}{2(p-1)(e^{2Kt}-1)}\right),\quad \text{for any } z\in X.
    \end{equation}
    Since $\|P_t u^2\|_1=\|u^2\|_1=1$ by the mass-preserving property of the heat semigroup, we deduce from \eqref{eq:coro_app2_1} by taking $p=2$ that
    \begin{equation}\label{eq:coro_app2_2}
        \left(P_{t}u(x)\right)^2
        \leq
        \exp\left(\frac{KD^2}{e^{2Kt}-1}\right).
    \end{equation}
    Secondly, by the equation \eqref{eq:var_pbm_pde} and the commutation between $P_t$ and $\Delta$, it follows that
    \begin{multline}\label{eq:coro_app2_revision_add_1}
        P_tu(z) - P_su(z)
        =
        \int_{s}^{t}P_{\tau}\left(\Delta u\right)(z)d\tau\\
        =
        -\int_{s}^{t}P_\tau\left(\alpha_2 u\log u +(\lambda-\alpha_1)u\right)(z)d\tau,\quad \text{for any $0<s<t$},
    \end{multline}
    for $m$-a.e $z\in X$.
    By the regularization of the heat semigroup, the equality \eqref{eq:coro_app2_revision_add_1} holds for any $z\in X$.
    Since $u$ is positive and $x$ is the maximum point, it follows that $(u\log u)(z)\leq u(z)\log u(x)$ for any $z$ in $X$ which by the comparison principle of the heat semigroup, yields
    \begin{equation}
        P_tu(x) - P_su(x)
        \geq
        -\alpha_2\log u(x)\int_{s}^{t}P_{\tau}u(x)d\tau
        - (\lambda-\alpha_1)\int_{s}^{t}P_{\tau}u(x)d\tau.
    \end{equation}
    Gr\"onwall's inequality further implies that
    \begin{equation}\label{eq:coro_app2_x}
        P_tu(x)
        \geq
        u(x)\exp\left(-\alpha_2 t \log u(x) -(\lambda-\alpha_1)t\right)
        =
        e^{-(\lambda-\alpha_1)t}u(x)^{1-\alpha_2 t}.
    \end{equation}
    So, together with \eqref{eq:coro_app2_2}, it follows that for any $0<t <1/\alpha_2$:
    \begin{equation}
        \log u(x)
        \leq
        \frac{(\lambda-\alpha_1)t}{1-\alpha_2t}+\frac{KD^2}{2(1-\alpha_2t)(e^{2Kt}-1)}.
    \end{equation}
    By using $K/(e^{2Kt}-1)\leq 1/(2t)$ when $K>0$ and letting $t=1/(2\alpha_2)$, it follows that
    \begin{equation}
        \log u(x) \leq \frac{\lambda-\alpha_1}{\alpha_2} + C_3(\alpha_2,K) D^2,
    \end{equation}
    where $C_3(\alpha_2,K)=\alpha_2$ when $K\geq 0$ and $C_3(\alpha_2,K)=K/(\exp(K/\alpha_2)-1)$ when $K<0$.
    
    As for the lower bound, from $(u\log u)(z)\geq u(z)\log u(y)$ and similar arguments as above, it follows that
    \begin{equation}\label{eq:coro_app2_y}
        P_tu(y)
        \leq
        e^{-(\lambda-\alpha_1)t}u(y)^{1-\alpha_2 t}.
    \end{equation}
    Taking $z=y$ in the dimension-free inequality \eqref{eq:coro_app2_1} and plugging \eqref{eq:coro_app2_x} and \eqref{eq:coro_app2_y}, it follows that
    \begin{multline}
        e^{-p(\lambda-\alpha_1)t}u(x)^{p-p\alpha_2 t}
        \leq
        \left(P_tu(x)\right)^p
        \leq
        P_t(u^p)(y)\exp\left(\frac{pKD^2}{2(p-1)(e^{2Kt}-1)}\right)\\
        \leq
        u(x)^{p-1}(P_tu)(y)\exp\left(\frac{pKD^2}{2(p-1)(e^{2Kt}-1)}\right)\\
        \leq
        u(x)^{p-1}e^{-(\lambda-\alpha_1)t}u(y)^{1-\alpha_2t}\exp\left(\frac{pKD^2}{2(p-1)(e^{2Kt}-1)}\right),
    \end{multline}
    where in the third inequality we use the comparison principle of the heat semigroup that $P_tu\leq \sup u=u(x)$, and in the last inequality we use the inequality \eqref{eq:coro_app2_y}.
    After reorganizing the inequality, it follows that
    \begin{equation}
        u(x)^{1-p\alpha_2 t}
        \leq
        e^{(p-1)(\lambda-\alpha_1)t}u(y)^{1-\alpha_2t}\exp\left(\frac{pKD^2}{2(p-1)(e^{2Kt}-1)}\right).
    \end{equation}
    Taking $p=1/(\alpha_2t)$ and $t=1/(2\alpha_2)$, we obtain that
    \begin{equation}
        \log u(y)
        \geq
        - \frac{\lambda-\alpha_1}{\alpha_2} - 2 C_3(\alpha_2,K) D^2,
    \end{equation}
    where we use the inequality $K/(e^{2Kt}-1)\leq 1/(2t)$ when $K>0$ again.
\end{proof}

As a final application, following the similar methods as in \cite[Lemma 5.3]{profeta2015sharp} together with Theorem \ref{thm:var_general}, we recover the classical result from \cite[Theorem 5.7.4]{bakry2014analysis} that any non-negative extremal function with the log-Sobolev constant $\lambda(\alpha_1,\alpha_2)$ is constant when $K>0$ and $0<\alpha_2\leq KN/(N-1)$.
\begin{corollary}
    Let $(X,d,m)$ be a $\mathrm{RCD}^*(K,N)$ space with $K>0$ and $N\in (2,\infty)$.
    Then any non-negative extremal function of \eqref{eq:var_pbm_general} with the log-Sobolev constant $\lambda(\alpha_1, \alpha_2)$ is constant whenever $0< \alpha_2 \leq KN/(N-1)$.
\end{corollary}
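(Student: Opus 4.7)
The plan is to combine the sharp logarithmic Sobolev inequality on $\text{RCD}^*(K,N)$ with $K>0$ and $N\in(2,\infty)$ due to \citep{cavalletti2017sharp}, which reads $\int_X f^2 \log f^2\,dm \leq \tfrac{2(N-1)}{KN}\int_X |\nabla f|^2\,dm$ for all $f\in W^{1,2}$ with $\|f\|_2=1$, with the Euler\--Lagrange identity for non-negative extremals coming from Theorem \ref{thm:var_general}.

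First, let $u$ be any non-negative extremal. By Theorem \ref{thm:var_general}, $u$ is positive, Lipschitz and belongs to $D(\Delta)$, with $-\Delta u = \alpha_2 u \log u + (\lambda - \alpha_1) u$, where $\lambda = \lambda(\alpha_1,\alpha_2)$. Multiplying this equation by $u$ and integrating over the compact space $X$ (no boundary terms appear) yields the extremality identity
\begin{equation*}
\int_X |\nabla u|^2\, dm \;=\; \frac{\alpha_2}{2}\int_X u^2\log u^2\, dm + (\lambda - \alpha_1).
\end{equation*}
Substituting this into the sharp log-Sobolev inequality applied to $u$ and rearranging gives
\begin{equation*}
\bigl(KN - (N-1)\alpha_2\bigr)\int_X u^2\log u^2\, dm \;\leq\; 2(N-1)(\lambda - \alpha_1).
\end{equation*}

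The constant function $u_0\equiv 1$ is admissible and produces value $\alpha_1$ in the infimum \eqref{eq:var_pbm_general}, so $\lambda\leq \alpha_1$ and the right-hand side above is non-positive; moreover $\int_X u^2\log u^2\, dm \geq 0$ by Jensen's inequality and $\|u\|_2=1$. In the strict regime $0<\alpha_2<KN/(N-1)$ the left-hand coefficient is strictly positive, forcing both sides to vanish. In particular $\int_X u^2\log u^2\, dm = 0$, hence $u^2\equiv 1$ $m$-a.e., and non-negativity yields $u\equiv 1$.

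The main obstacle lies in the borderline case $\alpha_2 = KN/(N-1)$, where the left-hand coefficient vanishes and the estimate only yields $\lambda = \alpha_1$; then the extremality identity reduces to $\int_X |\nabla u|^2\, dm = \tfrac{\alpha_2}{2}\int_X u^2\log u^2\, dm$, meaning $u$ saturates the sharp log-Sobolev inequality. To force $u$ constant in this case I would invoke the rigidity of equality in the sharp log-Sobolev inequality on $\text{RCD}^*(K,N)$ with $K>0$, along the lines of \citep[Lemma 5.3]{profeta2015sharp}: combined with the connectedness of $X$ and the positivity hypothesis $\lambda>0$, this rigidity forces any non-negative extremal with $\|u\|_2=1$ to equal the constant $1$.
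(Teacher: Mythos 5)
Your treatment of the strict range $0<\alpha_2<KN/(N-1)$ is correct and genuinely different from the paper's route: the paper runs a Bakry--Ledoux type $\Gamma_2$ computation, testing the weak Bochner inequality \eqref{eq:weak_Bochner} with $f=e^{av}$, $g=e^{dv}$ for $v=\log u$ and combining it with integral identities extracted from \eqref{eq:var_pbm_pde}, which after a careful choice of parameters in \eqref{eq:app3_system} yields $\left(\alpha_2-\frac{KN}{N-1}\right)\int|\nabla v|^2e^{bv}dm\geq\varepsilon\int|\nabla v|^4e^{bv}dm$ with $\varepsilon>0$, killing $|\nabla v|$ for \emph{all} $\alpha_2\leq KN/(N-1)$ at once. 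You instead feed the extremality identity into the sharp log-Sobolev inequality of \citep{cavalletti2017sharp} and use $\lambda\leq\alpha_1$ together with Jensen; this is shorter and uses the sharp inequality as a black box, but it degenerates exactly at the endpoint.

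The genuine gap is the borderline case $\alpha_2=KN/(N-1)$, which is part of the statement. There your inequality only yields $\lambda=\alpha_1$ and the information that $u$ saturates the sharp log-Sobolev inequality, and you propose to conclude by ``invoking the rigidity of equality'' citing \citep[Lemma 5.3]{profeta2015sharp}. That lemma is not a rigidity statement: it is the integral Bochner estimate used to \emph{prove} the inequality (indeed it is precisely the computation the paper adapts in its own proof of this corollary). No theorem asserting that equality in the sharp log-Sobolev inequality on an $\text{RCD}^*(K,N)$ space forces constancy is proved or accurately cited in your argument, and such a statement is essentially equivalent to the endpoint case of the corollary itself, so the appeal is close to circular. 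To close the gap you need an actual argument at $\alpha_2=KN/(N-1)$ --- for instance the paper's weighted Bochner computation, which is engineered so that a strictly positive coefficient $\varepsilon$ survives in front of $\int|\nabla v|^4e^{bv}dm$ even when the coefficient of $\int|\nabla v|^2e^{bv}dm$ vanishes.
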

\begin{proof}
    Let $u$ be an arbitrary non-negative extremal function of \eqref{eq:var_pbm_general} with a log-Sobolev constant $\lambda(\alpha_1,\alpha_2)$ with $0<\alpha_2 \leq KN/(N-1)$, and  define $v=\log u$.
    Let further $a$, $b$, and $d$ be real numbers to be determined later.
    We first estimate $\int e^{bv}(\Delta v)^2dm$ from both the PDE equation \eqref{eq:var_pbm_pde} and the weak Bochner inequality \eqref{eq:weak_Bochner}, and then derive the desired result.
    \begin{enumerate}[label=\textbf{\textsc{Step \arabic*:}}, fullwidth]
        \item We first derive the estimate from the equation \eqref{eq:var_pbm_pde}.
            Let $\alpha_3:=\lambda-\alpha_1$ in \eqref{eq:var_pbm_pde}.
            By the Lipschitz regularity of $v$ and the regularity of $|\nabla v|^2$ proved in Step 1 in Theorem \ref{thm:estimate_Li_Yau}, it follows that $\phi:=e^{(b-1)v}\Delta v\in W^{1,2}$ and $\psi:=e^{(b-1)v}|\nabla v|^2\in W^{1,2}$.
            On the one hand, applying $\phi$ and then $\psi$ to the right-hand side of \eqref{eq:var_pbm_pde} it follows that
            \begin{multline}\label{eq:app3_1}
                I:= \int (\alpha_2 v + \alpha_3)e^{v}\phi dm = \int (\alpha_2 v + \alpha_3)e^{v}e^{(b-1)v}\Delta v dm\\
                =
                -\alpha_2 \int |\nabla v|^2 e^{bv}dm - b\int (\alpha_2 v+\alpha_3)|\nabla v|^2 e^{bv}dm\\
                =-\alpha_2 \int |\nabla v|^2 e^{bv}dm - b\int (\alpha_2 v+\alpha_3)e^v\psi dm\\
                =-\alpha_2 \int |\nabla v|^2 e^{bv}dm + b\int \Delta(e^v)e^{(b-1)v}|\nabla v|^2 dm\\
                =-\alpha_2 \int |\nabla v|^2 e^{bv}dm + b\int |\nabla v|^4 e^{bv}dm +b \int (\Delta v)|\nabla v|^2 e^{bv}dm.
            \end{multline} 
           On the other hand, applying $\phi$ to the left-hand side of \eqref{eq:var_pbm_pde}, we get
            \begin{equation}
                I=
                \int -\Delta(e^v) e^{(b-1)v}\Delta v dm
                =
                -\int (\Delta v)|\nabla v|^2 e^{bv}dm - \int (\Delta v)^2 e^{bv}dm.
            \end{equation}
            Showing that
            \begin{equation}\label{eq:app3_ineq_1}
                \int (\Delta v)^2 e^{bv}dm
                =
                \alpha_2 \int |\nabla v|^2 e^{bv}dm - b\int |\nabla v|^4 e^{bv}dm -(b+1)\int (\Delta v)|\nabla v|^2 e^{bv}dm.
            \end{equation}
        \item We derive the estimate from the weak Bochner inequality \eqref{eq:weak_Bochner}.
            Note that $f:=e^{av}$ and $g:=e^{dv}$ satisfies the regularity requirement in \eqref{eq:weak_Bochner}.
            So plugging $f$ and $g$ into \eqref{eq:weak_Bochner}, it follows that the left-hand side of \eqref{eq:weak_Bochner} can be expressed as
            \begin{equation}\label{eq:app3_3}
                \frac{1}{2}\int \Delta(g)|\nabla f|^2dm
                =
                \frac{a^2d}{2}\int e^{(2a+d)v}(\Delta v)|\nabla v|^2 dm + \frac{a^2d^2}{2}\int e^{(2a+d)v}|\nabla v|^4dm,
            \end{equation}
            and the right-hand side of \eqref{eq:weak_Bochner} can be expressed as
            \begin{multline}\label{eq:app3_4}
                -\frac{N-1}{N}\int g(\Delta f)^2 dm -\int \Delta f \langle \nabla g, \nabla f \rangle dm + K\int g|\nabla f|^2dm\\
                =
                -a^2\frac{N-1}{N}\int e^{(2a+d)}(\Delta v)^2 dm
                - a^2\left(2a\frac{N-1}{N}+d\right)\int e^{(2a+d)v}(\Delta v)|\nabla v|^2 dm \\
                - a^2\left(a^2\frac{N-1}{N}+ad\right)\int e^{(2a+d)v}|\nabla v|^4 + a^2 K \int e^{(2a+d)v}|\nabla v|^2 dm.
            \end{multline}
            From \eqref{eq:app3_3} and \eqref{eq:app3_4}, it follows that \eqref{eq:weak_Bochner} reads as follows:
            \begin{multline}\label{eq:app3_ineq_2}
                \int e^{(2a+d)v}(\Delta v)^2 dm
                \geq
                \frac{KN}{N-1}\int e^{(2a+d)v}|\nabla v|^2\\
                -
                \left(a^2 + \frac{N}{N-1}ad + \frac{N}{2(N-1)}d^2\right)\int e^{(2a+d)v}|\nabla v|^4 dm\\
                - \left(2a + \frac{3N}{2(N-1)}d\right)\int e^{(2a+d)v}(\Delta v)|\nabla v|^2 dm.
            \end{multline}

        \item We conclude by comparing the coefficients of \eqref{eq:app3_ineq_1} and \eqref{eq:app3_ineq_2} and choosing particular values for $b,a,d$. 
            Let $\varepsilon>0$ be determined later and let $b,a,d\in \mathbb{R}$ satisfy the following system of equations:
            \begin{equation}\label{eq:app3_system}
                \left\{
                    \begin{aligned}
                    &b = 2a +d,\\
                    &b-\varepsilon = a^2 +\frac{N}{N-1}ad + \frac{N}{2(N-1)}d^2,\\
                    &b+1 = 2a + \frac{3N}{2(N-1)}d.
                    \end{aligned}
                \right.
            \end{equation}
            This system \eqref{eq:app3_system} admits real-valued solutions if and only if $0<\varepsilon\leq 4N/(N+2)^2$.
            Hence, choosing arbitrary $\varepsilon\in (0, 4N/(N+2)^2]$ and
            \begin{align*}
                d &= \frac{2(N-1)}{N+2},\\
                a &=\frac{2}{N+2} + \sqrt{\frac{4N}{(N+2)^2}-\varepsilon},\\
                b &= 2a+d.
            \end{align*}
            By comparing \eqref{eq:app3_ineq_1} and \eqref{eq:app3_ineq_2}, it follows that
            \begin{equation}
                \left(\alpha_2 - \frac{KN}{N-1}\right)\int |\nabla v|^2 e^{bv}dm
                \geq
                \varepsilon \int |\nabla v|^4 e^{bv}dm.
            \end{equation}
            Since $0<\alpha_2\leq KN/(N-1)$, then $|\nabla v|$ has to be $0$, implying that $u$ is constant.
    \end{enumerate}
\end{proof}

\begin{appendix}
\section{Omori-Yau Maximum Principle}\label{appendix}
In the appendix, we provide a slightly generalized version of the Omori-Yau type maximum principle for the whole metric space of proper $\mathrm{RCD}(K,\infty)$ spaces with $K$ in $\mathbb{R}$ which may not support the doubling property. 
To show it, we first show the Kato's inequality in the proper $\mathrm{RCD}(K,\infty)$ setting whose proof follows a similar argument as in \cite{zhang2016local}.
For the sake of completeness, we provide the complete argumentation.

Beforehand, recall the definition of the weak Laplacian.
We say that an operator $L$ on $W^{1,2}_{loc}$ is the weak Laplacian provided that for each $f\in W^{1,2}_{loc}$, $Lf$ is a linear functional acting on $W^{1,2}\cap L^{\infty}$ with bounded support defined as:
\begin{equation}
    Lf(g):=
    -\int \langle \nabla f, \nabla g \rangle dm,\quad \text{for all } g\in W^{1,2}\cap L^{\infty}\text{ with bounded support}.
\end{equation}
For each $h$ in $W^{1,2}_{loc}\cap L^{\infty}$, $h\cdot Lf$ is the linear functional given by $h\cdot Lf(g):=Lf(hg)$ for each $g$ in $W^{1,2}\cap L^{\infty}$ with bounded support.
We say that $Lf$ is a signed Radon measure provided that there exists a signed Radon measure $\mu$ such that $Lf(g)=\int g d\mu$ for all $g\in W^{1,2}\cap L^{\infty}$ with bounded support. 
It is clear that in this case, we have $f\in D(\bm{\Delta})$ and $Lf=\bm{\Delta}f$.
For $w\in W^{1,2}\cap L^{\infty}$ and $m_w:=e^{w}\cdot m$, $L_w$ denotes the weighted weak Laplacian on $W^{1,2}_{loc}$ defined as
\begin{equation}
    L_wf(g):= - \int \langle \nabla f, \nabla g \rangle dm_w,\quad \text{for all }g\in W^{1,2}\cap L^{\infty}\text{ with bounded support}.
\end{equation}
It is also easy to check that $L_wf=e^w\cdot(Lf + \langle \nabla w, \nabla f \rangle m)$.
When $L_wf$ is a signed Radon measure, we denote by $L_wf=(L^{ac}_wf)\cdot m_w + L_w^{s}f$ its Lebesgue decomposition with respect to $m_w$.
Finally, we remark that using the similar arguments as in \cite[Lemma 3.2]{zhang2016local}, we have the following chain rule: for $f\in W^{1,2}_{loc}\cap L^{\infty}$ and $\phi\in C^2(\mathbb{R})$, it holds that $\phi(f)\in W^{1,2}_{loc}\cap L^{\infty}$ and that
\begin{equation}
    L\left( \phi(f)\right)
    =
    \phi'(f)\cdot Lf + \phi''(f)|\nabla f|^2\cdot m.
\end{equation}

\begin{lemma}\label{lemma:Kato_ineq}
    (Kato's inequality)
    Let $(X,d,m)$ be a proper $\mathrm{RCD}(K,\infty)$ space and $w$ be in $W^{1,2}\cap L^{\infty}$.
    Suppose $f$ in $W^{1,2}_{loc}\cap L^{\infty}$ is such that $L_wf$ is a signed Radon measure and that $L^{s}_{w}f\geq 0$.
    Then $L_w(f_{+})$ is a signed Radon measure such that
    \begin{equation}
        L_w(f_+) \geq \chi_{\{f> 0\}}L_{w}^{ac}f \cdot m_w,
    \end{equation}
    where $f_+:=\max\{f,0\}$ and $f_{-}:=\max\{-f, 0\}$.
\end{lemma}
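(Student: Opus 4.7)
The plan is to obtain $L_w(f_+)$ as the limit of $L_w(\phi_\epsilon(f))$ for a family of convex $C^2$ approximations $\phi_\epsilon$ of the positive-part map $t \mapsto t_+$. Specifically, I would choose $\phi_\epsilon$ non-decreasing and convex with $\phi_\epsilon \equiv 0$ on $(-\infty, 0]$, $\phi_\epsilon(t) = t - c_\epsilon$ on $[2\epsilon, \infty)$ for some $c_\epsilon \to 0$, smoothly interpolated in between so that $0 \leq \phi_\epsilon' \leq 1$, $\phi_\epsilon'' \geq 0$, $\phi_\epsilon(t) \to t_+$ uniformly, and $\phi_\epsilon'(t) \to \chi_{\{t>0\}}$ pointwise. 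Applying the chain rule for $L_w$ recalled just before the statement, $\phi_\epsilon(f) \in W^{1,2}_{loc}\cap L^\infty$ and
\begin{equation*}
L_w(\phi_\epsilon(f)) = \phi_\epsilon'(f) \cdot L_w f + \phi_\epsilon''(f) |\nabla f|^2 \cdot m_w
\end{equation*}
is a signed Radon measure. Decomposing $L_w f = L_w^{ac}f \cdot m_w + L_w^s f$ and invoking $\phi_\epsilon', \phi_\epsilon'' \geq 0$ together with the hypothesis $L_w^s f \geq 0$ yields
\begin{equation*}
L_w(\phi_\epsilon(f)) \geq \phi_\epsilon'(f) L_w^{ac}f \cdot m_w
\end{equation*}
as signed measures.

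To pass to the limit $\epsilon \to 0$, I would test against an arbitrary non-negative $g \in \text{Lip}_{bs}$. On the left,
\begin{equation*}
L_w(\phi_\epsilon(f))(g) = -\int \phi_\epsilon'(f) \langle \nabla f, \nabla g \rangle \, dm_w \longrightarrow -\int \chi_{\{f > 0\}} \langle \nabla f, \nabla g \rangle \, dm_w
\end{equation*}
by dominated convergence (with bound $|\phi_\epsilon'(f)| \leq 1$ and $\langle \nabla f, \nabla g\rangle \in L^1(m_w)$), and locality of the minimal weak upper gradient gives $\chi_{\{f>0\}}\nabla f = \nabla f_+$, identifying the limit with $L_w(f_+)(g)$. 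On the right, dominated convergence against $|L_w^{ac}f| \in L^1_{loc}(m_w)$ yields $\int g \phi_\epsilon'(f) L_w^{ac}f\, dm_w \to \int g \chi_{\{f > 0\}} L_w^{ac}f\, dm_w$, so
\begin{equation*}
L_w(f_+)(g) \geq \int g \chi_{\{f > 0\}} L_w^{ac}f \, dm_w \quad \text{for every } g \in \text{Lip}_{bs}, \, g \geq 0.
\end{equation*}

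The main remaining obstacle is to upgrade this inequality of linear functionals into the statement that $L_w(f_+)$ is itself a signed Radon measure. For this I would consider $\Lambda(g) := L_w(f_+)(g) - \int g \chi_{\{f > 0\}} L_w^{ac}f\, dm_w$, which is well defined on $\text{Lip}_{bs}$ because $\chi_{\{f > 0\}}L_w^{ac}f \in L^1_{loc}(m_w)$, and which is non-negative on non-negative test functions by the previous step. Using the existence in proper $\text{RCD}(K, \infty)$ spaces of $\text{Lip}_{bs}$ cut-off functions dominating $\chi_K$ for any compact $K$, positivity forces $\Lambda$ to be uniformly bounded on $\{g \in \text{Lip}_{bs}\colon \text{supp}(g) \subseteq K,\ 0\leq g \leq 1\}$; a Riesz--Markov argument then represents $\Lambda$ by a non-negative Radon measure $\nu$, so that $L_w(f_+) = \chi_{\{f > 0\}} L_w^{ac}f \cdot m_w + \nu$ is a signed Radon measure and the asserted inequality follows from $\nu \geq 0$. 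The delicate part of the argument is the choice of $\phi_\epsilon$ with $\phi_\epsilon'(0) = 0$, which keeps the singular contribution $\phi_\epsilon'(f) L_w^s f \geq 0$ throughout the limiting procedure and is what makes the vanishing of $\phi_\epsilon''(f)|\nabla f|^2$ on $\{f = 0\}$ (via locality) unnecessary for the proof.
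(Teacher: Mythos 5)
Your proof is correct and reaches the stated inequality, but it executes the regularization differently from the paper. The paper first reduces the statement to showing $L_w(|f|)\geq \mathrm{sgn}(f)\cdot L_wf$, regularizes $|t|$ by $\phi_{\varepsilon}(t)=\sqrt{t^2+\varepsilon^2}-\varepsilon$, and extracts the key inequality $L_wf_{\varepsilon}\geq \tfrac{f}{f_{\varepsilon}+\varepsilon}\cdot L_wf$ from the chain rule applied to $t\mapsto t^2$ (so that only the convexity of $\phi_\varepsilon$ enters implicitly); it then passes to the limit on the left-hand side via reflexivity and weak compactness of $(f_{\varepsilon_j})$ in $W^{1,2}(\Omega,m_w)$, and obtains the Radon-measure representation from a uniform bound $|L_{\Omega}(\phi)|\leq C_{\Omega}\max|\phi|$ borrowed from the literature. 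You instead regularize $t\mapsto t_+$ directly by convex $C^2$ functions, invoke the full second-order chain rule for $L_w$ (which the paper does state just before the lemma), discard the non-negative terms $\phi_{\varepsilon}''(f)|\nabla f|^2\cdot m_w$ and $\phi_{\varepsilon}'(f)L_w^{s}f$, and pass to the limit on both sides by dominated convergence using the explicit formula $L_w(\phi_\varepsilon(f))(g)=-\int \phi_{\varepsilon}'(f)\langle\nabla f,\nabla g\rangle\,dm_w$ together with $\nabla f_+=\chi_{\{f>0\}}\nabla f$; your Riesz step then exploits positivity of the residual functional rather than a boundedness estimate. Your route is more direct (no detour through $|f|$, no weak-compactness argument), at the cost of relying on the $C^2$ chain rule for general $\phi$ and on the decomposition of the functional $\phi_{\varepsilon}'(f)\cdot L_wf$ into absolutely continuous and singular parts — the latter carries the usual ambiguity of evaluating $\phi_{\varepsilon}'(f)$ on the $m$-null set charged by $L_w^{s}f$, which is harmless here precisely because $\phi_{\varepsilon}'\geq 0$ everywhere, and which the paper's own proof glosses over in exactly the same way when it integrates $\tfrac{\phi f}{f_{\varepsilon_j}+\varepsilon_j}$ against $dL_wf$. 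One small correction: the role of $\phi_{\varepsilon}'(0)=0$ is not to keep the singular contribution non-negative (that only needs $\phi_{\varepsilon}'\geq 0$), but to ensure $\phi_{\varepsilon}'(f)\to\chi_{\{f>0\}}$ rather than $\chi_{\{f\geq 0\}}$, matching the locality identity for $\nabla f_+$.
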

\begin{proof}
    It suffices to prove that $L_w(|f|)$ is a signed Radon measure and that
    \begin{equation}\label{eq:Kato_ineq}
        L_{w}(|f|)\geq \mathrm{sgn}(f)\cdot L_wf,
    \end{equation}
    where $\mathrm{sgn}(t)=1$ if $t>0$ and $\mathrm{sgn}(t)=-1$ if $t<0$ and $\mathrm{sgn}(t)=0$ if $t=0$.
    Indeed, if the inequality \eqref{eq:Kato_ineq} holds, then it follows that both $L_w(f_{+})$ and $L_w(f_{-})$ are signed Radon measures and \eqref{eq:Kato_ineq} implies that
    \begin{equation}\label{eq:Kato_ineq_2}
        L_w(f_+) + L_w(f_{-})
        \geq
        \chi_{\{f>0\}}\cdot L_w f - \chi_{\{f<0\}}\cdot L_w f.
    \end{equation}
    By locality of the minimal weak upper gradient and the inner regularity of Radon measures, it is immediate to check that $L_w(f_{+})$ is concentrated on the set $\{f\geq 0\}$.
    Then the inequality \eqref{eq:Kato_ineq_2} with the assumption that $L^{s}_{w}f \geq 0$ implies that
    \begin{equation}
        L_{w}(f_{+})
        \geq
        \chi_{\{f>0\}}L_{w}f
        =
        \chi_{\{f>0\}}\left( L^{ac}_wf\cdot m_w + L^{s}_wf\right)
        \geq
        \chi_{\{f>0\}}L^{ac}_w f\cdot m_w.
    \end{equation}
    We are left to show \eqref{eq:Kato_ineq}.
    Let $\varepsilon>0$ and $\phi_{\varepsilon}(t):=\sqrt{t^2+\varepsilon^2}-\varepsilon$ and $f_{\varepsilon}:=\phi_{\varepsilon}(f)$.
    Since $\phi_{\varepsilon}$ is in $C^2(\mathbb{R})$, it follows that $f_{\varepsilon}\leq |f|$ and that
    \begin{equation}\label{eq:Kato_ineq_3}
        |\nabla f_{\varepsilon}|
        =
        |\phi'_{\varepsilon}(f)||\nabla f|
        =
        \frac{|f|}{\sqrt{f^2+\varepsilon^2}}|\nabla f|
        \leq
        |\nabla f|.
    \end{equation}
    Further, the chain rule of $L_w$ for $f\in W^{1,2}_{loc}\cap L^{\infty}$ and $\psi(t)=t^2$ yields
    \begin{multline}
        2f\cdot L_{w}f + 2|\nabla f|^2\cdot m_w
        =
        L_{w}f^2
        =
        L_{w}\left((f_{\varepsilon}+\varepsilon)^2 -\varepsilon^2 \right)\\
        =
        2(f_{\varepsilon}+\varepsilon)L_wf_{\varepsilon} + 2|\nabla f_{\varepsilon}|^2 \cdot m_w.
    \end{multline}
    So by \eqref{eq:Kato_ineq_3}, it follows that
    \begin{equation}\label{eq:Kato_ineq_4}
        L_w f_{\varepsilon}
        \geq
        \frac{f}{f_{\varepsilon}+\varepsilon}\cdot L_w f.
    \end{equation}
    Now, let $\Omega\subseteq X$ be any arbitrary bounded open subset.
    Let $\eta\in \mathrm{Lip}_c$ be a positive cut-off function such that $\eta\equiv 1$ on $\Omega$.
    Since $|\nabla f_{\varepsilon}|\leq |\nabla f|$ and $0\leq f_{\varepsilon}\leq |f|$, it follows that $\eta f_{\varepsilon}\leq \eta |f|$ and that
    \begin{equation*}
        |\nabla (\eta f_{\varepsilon})|
        \leq
        f_{\varepsilon}|\nabla \eta| + \eta |\nabla f_{\varepsilon}|
        \leq
        |f||\nabla \eta| + \eta |\nabla f|.
    \end{equation*}
    Since $f$ is in $W^{1,2}_{loc}\cap L^{\infty}$, it follows that $(\eta f_{\varepsilon})_{\varepsilon}$ is uniformly bounded in $W^{1,2}$.
    Note that by the assumption on $w$, both the Sobolev spaces $W^{1,2}$ and $W^{1,2}(X,m_w)$ and the minimal weak upper gradients induced by $m$ and $m_w$ coincide (see \cite[Lemma 4.11]{ambrosio2014calculus}) and hence $(\eta f_{\varepsilon})_{\varepsilon}$ is also uniformly bounded in $W^{1,2}(X,m_w)$.
    Since $W^{1,2}$ is a Hilbert space, $W^{1,2}(X,m_w)$ is also Hilbert and hence reflexive.
    Therefore, there exists a subsequence $(\eta f_{\varepsilon_j})$ with $\varepsilon_j\searrow 0$ converging weakly in $W^{1,2}(X,m_w)$ to some $g\in W^{1,2}(X,m_w)$.
    As $f_{\varepsilon_j}\rightarrow |f|$ pointwise, we obtain that $g=\eta|f|$ $m_w$-a.e. and in particular $g=|f|$ $m_w$-a.e. on $\Omega$ since $\eta\equiv 1$ on $\Omega$.
    Since $f_{\varepsilon_j}(x)+\varepsilon_j \rightarrow |f|(x)$ for all $x$ in $X$ and $|f|/(f_{\varepsilon_j}+\varepsilon_j)\leq 1$, for any non-negative Lipschitz $\phi\in \mathrm{Lip}_c(\Omega)$, we obtain that
    \begin{multline}\label{eq:Kato_ineq_5}
        L_{\Omega,f}(\phi)
        :=
        -\int_{\Omega} \langle \nabla \phi, \nabla |f| \rangle dm_w
        =
        -\int_{\Omega} \langle \nabla \phi, \nabla (\eta|f|) \rangle dm_w\\
        =
        -\lim_{j\rightarrow \infty} \int_{\Omega} \langle \nabla \phi, \nabla (\eta f_{\varepsilon_j)} \rangle dm_w
        =
        -\lim_{j\rightarrow \infty} \int_{\Omega} \langle \nabla \phi, \nabla f_{\varepsilon_j} \rangle dm_w\\
        =
        \lim_{j\rightarrow \infty} L_wf_{\varepsilon_j}(\phi)
        \geq
        \lim_{j\rightarrow \infty}\int_{\Omega}\frac{\phi f}{f_{\varepsilon_j}+\varepsilon_j}dL_wf
        =
        \int_{\Omega} \phi \mathrm{sgn}(f)dL_wf.
    \end{multline}
    Following a similar argument as in \citep[(4-28),(4-29) in Theorem 4.14]{cavalletti2020new}, there exists a constant $C_{\Omega,f}>0$ such that $|L_{\Omega,f}(\phi)|\leq C_{\Omega,f}\max|\phi|$ for any Lipschitz function $\phi$ with $\mathrm{supp}(\phi)\subset \Omega$.
    Since $X$ is proper, by the Riesz representation theorem, there exists a signed Radon measure $\mu_{\Omega,f}$ on $\Omega$ such that $L_{\Omega,f}(\phi)=\int_{\Omega}\phi d\mu_{\Omega,f}$ for all $\phi$ in $\mathrm{Lip}_{c}(\Omega)$ and that $\mu_{\Omega,f}\geq \mathrm{sgn}(f)L_wf$ on $\Omega$ (see the remark before Theorem 1.2 in \citep{zhang2016local} or \citep[Proposition 6.2.16]{gigli2020lectures} for finite signed Radon measures).
    Clearly $\mu_{\Omega_1,f}$ and $\mu_{\Omega_2,f}$ coincide on $\Omega_1\cap \Omega_2$ for any bounded open subsets $\Omega_1$ and $\Omega_2$.
    Henceforth, there exists a unique signed Radon measure $\nu$ on $X$ such that $\nu|_{\Omega}=\mu_{\Omega,f}$ for all bounded open domain $\Omega$, and thus, we obtain that $L_w|f|$ is a signed Radon measure with $L_w|f| \geq \mathrm{sgn}(f)\cdot L_wf$.
\end{proof}

We now show the Omori-Yau type maximum principle in proper $\mathrm{RCD}(K,\infty)$ spaces based on the previous Kato's inequality.
While the proof in \textcolor{blue}{\citep{zhang2016local}} relies on the doubling property and the weak Poincaré inequality of the underlying metric measure space as well as the weak maximum principle, our proof is based on the "Sobolev-to-Lip" property of $\mathrm{RCD}(K,\infty)$ spaces which in general does not support the doubling property.
\begin{lemma}\label{lemma:maximum_principle}
    (Omori\--Yau type maximum principle)
    Let $(X,d,m)$ be a proper $\mathrm{RCD}(K,\infty)$ space with $K$ in $\mathbb{R}$.
    Let further $f$ be in $W^{1,2}\cap L^{\infty} \cap D(\bm{\Delta})$ such that $\bm{\Delta}^{s}f\geq 0$.
    Suppose that $f$ achieves one of its strict maximum in $X$ in the sense that there exists a bounded and measurable subset $U\subset X$ satisfying $m(U)>0$ and $m(X\setminus U)>0$ with
    \begin{equation}
        \esssup_{U}f > \esssup_{X\setminus U}f.
    \end{equation}
    Then, given any $w$ in $W^{1,2}\cap \mathrm{Lip}_b$, for any $\varepsilon>0$, we have
    \begin{equation}\label{eq:MP_1}
        m\left( \left\{x\in X \colon f(x)\geq \esssup_{X}f-\varepsilon \text{ and } (\bm{\Delta}^{ac}f)(x) + \langle \nabla f, \nabla w \rangle(x)\leq \varepsilon   \right\}\right)>0.
    \end{equation}
    In particular, there exists a sequence $(x_j)$ in $X$ such that $f(x_j)\geq \esssup f(x_j) -1/j$ and $(\bm{\Delta}^{ac}f)(x_j)+\langle \nabla f,\nabla w\rangle(x_j)\leq 1/j$.
\end{lemma}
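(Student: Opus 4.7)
The plan is to argue by contradiction, combining Kato's inequality (Lemma \ref{lemma:Kato_ineq}) with a Stampacchia-type truncation tested against itself. Writing $M := \esssup_X f$, I would first use the strict-max hypothesis to secure a positive gap $\delta := M - \esssup_{X \setminus U} f > 0$. If \eqref{eq:MP_1} were to fail for some $\varepsilon_0 > 0$, monotonicity in $\varepsilon$ of the set appearing in \eqref{eq:MP_1} would allow me to assume $\varepsilon_0 < \delta$, yielding
\[
\bm{\Delta}^{ac} f + \langle \nabla f, \nabla w\rangle > \varepsilon_0 \quad \text{$m$-a.e. on } \{f \geq M - \varepsilon_0\}.
\]

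Next, I would set $c := M - \varepsilon_0$ and $h := (f - c)_+$. The choice of $c$ forces $\{f > c\} \subseteq U$ up to $m$-null sets, and properness of $X$ makes $\bar{U}$ compact; consequently $h \in W^{1,2} \cap L^\infty$ has bounded support in $\bar{U}$. Since $L_w(f - c) = L_w f$ remains a signed Radon measure with non-negative singular part $L_w^s f = e^w \bm{\Delta}^s f \geq 0$, and with absolutely continuous density $L_w^{ac} f = \bm{\Delta}^{ac} f + \langle \nabla w, \nabla f\rangle$ with respect to $m_w$, Kato's inequality applied to $f - c$ would give
\[
L_w h \;\geq\; \chi_{\{f > c\}} L_w^{ac} f \cdot m_w \;\geq\; \varepsilon_0\, \chi_{\{f > c\}}\, m_w.
\]

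The contradiction will come from pairing $L_w h$ against $h$ itself: because $h$ is admissible as a bounded-support test in the definition of the weak Laplacian, one has
\[
-\int_X |\nabla h|^2\, dm_w \;=\; L_w h(h) \;=\; \int_X h\, dL_w h \;\geq\; \varepsilon_0 \int_{\{f > c\}} h\, dm_w \;>\; 0,
\]
where positivity on the right uses $h > 0$ on $\{f > c\}$ and $m(\{f > c\}) > 0$. The non-positive left-hand side closes the contradiction and proves \eqref{eq:MP_1}. The announced sequence $(x_j)$ will then be obtained by applying \eqref{eq:MP_1} with $\varepsilon = 1/j$ and choosing any point in the resulting set of positive measure, invoking the Sobolev-to-Lipschitz property to pass to a continuous representative when pointwise meaning is required. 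The main obstacle I anticipate is bookkeeping around Kato's inequality, namely verifying that $L_w(f - c) = L_w f$ retains its signed Radon-measure structure with non-negative singular part, that the density formula for $L_w^{ac} f$ with respect to $m_w$ is correct, and that $h$ is genuinely admissible as a bounded-support test against $L_w h$; once these technicalities are settled, the self-testing identity closes the argument immediately.
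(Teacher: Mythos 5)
Your proposal is correct and mirrors the paper's argument up to and including the application of Kato's inequality, but it departs in the concluding step. The paper only retains the soft consequence $L_w g \geq 0$, tests against $g$ to deduce $-\int |\nabla g|^2\,dm_w = L_w g(g)\geq 0$, hence $|\nabla g|=0$ $m_w$-a.e., and then invokes the Sobolev-to-Lipschitz property of $\text{RCD}(K,\infty)$ spaces to conclude $g$ is constant and therefore $\equiv 0$ (since $g=0$ on a set of positive measure), contradicting the strict maximum. You instead retain the quantitative lower bound $L_w h \geq \varepsilon_0\,\chi_{\{f>c\}}\,m_w$ furnished jointly by the contradiction hypothesis and Kato's inequality, and test directly against $h$: since $h>0$ on $\{f>c\}$ and $m(\{f>c\})>0$, the resulting integral is strictly positive, contradicting $L_w h(h) = -\int|\nabla h|^2\,dm_w \leq 0$ at once. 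This is a genuine and slightly cleaner variant: it bypasses Sobolev-to-Lipschitz entirely at this stage and yields a quantitative strict-inequality contradiction rather than a soft one. Both routes implicitly use the identity $L_w h(h)=\int h\,d(L_w h)$ for the merely Sobolev test function $h$; you flag this as a possible obstacle, but it is built into the paper's definition of ``$Lf$ is a signed Radon measure'' (the defining identity is required for all $g\in W^{1,2}\cap L^\infty$ with bounded support) and the paper's own proof relies on it equally. One small remark: the passage to the sequence $(x_j)$ does not need Sobolev-to-Lipschitz; once the set in \eqref{eq:MP_1} has positive measure with $\varepsilon=1/j$, one simply picks a point in it with respect to fixed Borel representatives of $f$, $\bm{\Delta}^{ac}f$, and $\langle\nabla f,\nabla w\rangle$.
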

\begin{proof}
    We adapt the proof in \citep{zhang2016local}.
    Let $M:=\esssup_{X}f$.
    Suppose by contradiction that there exists $\varepsilon_0>0$ and $w$ in $W^{1,2}\cap \mathrm{Lip}_b$ such that \eqref{eq:MP_1} fails.
    Then for possibly smaller $\varepsilon_0>0$ such that $M-\varepsilon_0>\esssup_{X\setminus U}f$, it follows that $g:=(f-(M-\varepsilon_0))_{+}$ is in $W^{1,2}$ with $g= 0$ $m$-a.e. in $X\setminus U$, and that
    \begin{equation}
        m\left(\left\{x\in X: f(x)>M-\varepsilon_0\text{ and } \bm{\Delta}^{ac}f(x)+\langle \nabla f, \nabla w \rangle(x)\leq \varepsilon_0  \right\}\right)
        =0.
    \end{equation}
    It follows that for $m$-a.e. $x$ in $\{y\in X: f(y)>M-\varepsilon_0\}$, we have
    \begin{equation}
        \bm{\Delta}^{ac}f(x) + \langle \nabla f, \nabla w \rangle(x)>\varepsilon_0.
    \end{equation}
    Note that since $w\in W^{1,2}\cap \mathrm{Lip}_b$ and $\bm{\Delta}f$ is a signed Radon measure, it follows that $e^w(\bm{\Delta}f + \langle \nabla w, \nabla f\rangle m)$ is a well-defined signed Radon measure.
    By the identity of the weighted weak Laplacian $L_wf=e^w(Lf + \langle \nabla w, \nabla f \rangle m)$ and $Lf=\bm{\Delta}f$, it follows that
    \begin{equation}
        L^{ac}_wf \cdot m_w
        =
        e^{w}\left( L^{ac}f + \langle \nabla w, \nabla f\rangle \right)\cdot m
        \geq
        e^{-\|w\|_{L^\infty}}\varepsilon_0\cdot m
        >0
    \end{equation}
    on $\{y\in X: f(y)> M-\varepsilon_0\}$.
    Moreover, the identity of the weighted weak Laplacian together with $\bm{\Delta}^{s}f\geq 0$ implies that $L^{s}_{w}f \geq 0$ and $f-(M-\varepsilon_0)$ is in $W^{1,2}_{loc}\cap L^{\infty}$, applying Lemma \ref{lemma:Kato_ineq} to $f-(M-\varepsilon_0)$, yields
    \begin{equation}\label{eq:maximum_Principle_1}
        L_w g
        =
        L_w\left(f-(M-\varepsilon_0)\right)_+
        \geq
        \chi_{\{f> M-\varepsilon_0\}}L^{ac}_{w}f\cdot m_w
        \geq
        0.
    \end{equation}
    By the definition of $L_w$, it follows from \eqref{eq:maximum_Principle_1} that
    \begin{equation}
        -\int_X \langle \nabla g, \nabla g \rangle dm_w
        =
        L_wg(g)
        \geq 0,
    \end{equation}
    which implies that $|\nabla g|=0$ $m_w$-a.e.
    From $m$ being equivalent to $m_w$ since $w\in L^{\infty}$, we get that $|\nabla g|=0$ $m$-a.e.
    Together with $g$ being in $W^{1,2}$ and $|\nabla g|$ in $L^{\infty}$, by the Sobolev-to-Lip property of $\mathrm{RCD}(K,\infty)$ spaces, it follows that $g$ admits a Lipschitz representation $\tilde{g}$ with $\mathrm{Lip}(\tilde{g})\leq \| |\nabla g|\|_{\infty}$.
    Then, from the fact that $|\nabla g|=0$ $m$-a.e. as well as $g=0$ $m$-a.e on $X\setminus U$, it follows that $\tilde{g}$ is constant and $\tilde{g}\equiv 0$.
    Hence, $f\leq M-\varepsilon_0$ $m$-a.e., which is a contradiction.
\end{proof}

\section{Proof of Rellich-Kondrachov type theorem}\label{appendix_B}
In this subsection, we show the second claim of Lemma \ref{lemma:Pre_Sobolev_embedding}.
The Rellich\--Kondrachov type theorem has been previously proved for $\mathrm{RCD}^*(K,N)$ spaces with $K>0$ and $N\in (2,\infty)$ in \cite{profeta2015sharp} by following the argument in \cite[Theorem 8.1]{hajlasz2000sobolev}.
A careful inspection of their proof shows that the result also holds for compact $\mathrm{CD}(K,N)$ spaces with $K\in \mathbb{R}$ and $N\in (2,\infty)$ without the infinitesimal Hilbertianty assumption.
For the sake of completeness, we provide the proof.
\begin{proof}[Proof of \ref{lemma:Rellich_Kondrachov} of Lemma \ref{lemma:Pre_Sobolev_embedding}]
    We follow the similar arguments from \cite{hajlasz2000sobolev}.
    Let $D:=\mathrm{diam}(X)$.
    By the assumptions and the generalized Bishop\--Gromov inequality, without loss of generality, we can assume that $m(X)=1$.
    By the Sobolev inequality \eqref{eq:Sobolev_ineq} and the assumption that $\sup_{n}\|f_n\|_{W^{1,2}}<\infty$, it follows that $(f_n)_n$ is bounded in $L^{2^*}$.
    Since $L^{2^*}$ is reflexive, there exists $f\in L^{2^*}$ such that there exists a subsequence $(f_{n_k})_k$ of $(f_n)_n$ such that $f_{n_k}$ converges to $f$ weakly in $L^{2^*}$.
    It suffices to show that $f_{n_k}$ converges to $f$ strongly in $L^q$ for all $q\in [1,2^*)$.
    For the notation's convenience, we relabel $(f_{n_k})_k$ as $(f_k)_k$.

    Let $\varepsilon>0$ and $0<r\leq D/2$ be arbitrary and $q\in [1,2^*)$.
    Let $\bar{f}_{r}(x):=\fint_{B(x,r)}f dm$ and $\bar{(f_k)}_{r}(x):=\fint_{B(x,r)}f_k dm$ for any $x\in X$.
    Then it follows that
    \begin{multline}\label{eq:RK_embed_1}
        m\left(\left|f_k(x)-f(x) \right|>\varepsilon\right)
        \leq
        m\left( \left| f_k(x) - \bar{(f_k)}_r(x)\right|>\varepsilon/3\right)\\
        +
        m\left( \left| \bar{(f_k)}_r(x) - \bar{f}_r(x)\right|>\varepsilon/3\right)
        +
        m\left( \left| \bar{f}_r(x) - f(x)\right|>\varepsilon/3\right).
    \end{multline}
    Note that the compact $\mathrm{CD}(K,N)$ space with bounded diameter $D$ support the global doubling property and the weak $(1,1)$-Poincaré inequality \eqref{eq:Poincare_ineq} with the global constant $C(K,N,D)$.
    By the equivalent characterizations of Poincaré inequalities \cite[Theorem 2]{keith2003modulus} and \cite[Proposition 4.13]{bjorn2011nonlinear}, it follows that the weak $(1,2)$-Poincaré inequality \eqref{eq:Poincare_ineq} holds for all pairs $(f_k, |\nabla f_k|)_k$.
    Moreover, by the doubling property of $\mathrm{CD}(K,N)$ spaces, compact $\mathrm{CD}(K,N)$ spaces support the Lebesgue differentiation theorem and the maximal theorem (see \cite{heinonen2015sobolev}). 
    Let $x\in X$ be any Lebesgue point of $X$ and $r_i:=r/2^i$ for $i\in \mathbb{N}$.
    Then for the first term of the right-hand side of \eqref{eq:RK_embed_1}, it follows that
    \begin{multline}
        \left| f_k(x) -\bar{(f_k)}_r(x)\right|
        \leq
        \sum_{i=0}^{\infty}\left| \bar{(f_k)}_{r_i}(x) - \bar{(f_k)}_{r_{i+1}}(x)\right|\\
        \leq
        \sum_{i=0}^{\infty}\fint_{B(x,r_i)}\left|f_k(y) - \bar{(f_k)}_{r_i}(x)\right| dm(y)
        \leq
        C(D)\sum_{i=0}^{\infty}r_i\left(\fint_{B(x, 2r_i)}|\nabla f_k|^2 dm \right)^{1/2} \\
        \leq
        2C(D)r \cdot M\left(|\nabla f_k|^2\right)^{1/2}(x),
    \end{multline}
    where $Mf(x):=\sup_{r>0}\fint_{B(x,r)}|f(y)| dm$ be the centered maximal operator.
    So by applying the maximal theorem \cite[Theorem 3.5.6]{heinonen2015sobolev}, it follows that
    \begin{multline}\label{eq:RK_embed_2}
        m\left(\left| f_k(x) - \bar{(f_k)}_r(x)\right|>\varepsilon/3\right)
        \leq
        m\left(M\left(\left|\nabla f_k\right|^2\right)^{1/2}(x)> \frac{\varepsilon}{6C(D)r}\right)\notag\\
        \leq
        \frac{36 C_1\cdot C(D)^2r^2}{\varepsilon^2}\sup_{k}\left\| \left|\nabla f_{k} \right| \right\|^2_{2}.
    \end{multline}
    For the second term of the right-hand side of \eqref{eq:RK_embed_1}, note that since $f_k$ converges to $f$ weakly in $L^{2^*}(X)$, so taking the test function $\eta:=\chi_{B(x,r)}/m(B(x,r))$, it follows that
    \begin{equation}
        \frac{1}{m(B(x,r))}\int_{B(x,r)}f_k dm \rightarrow \frac{1}{m(B(x,r))}\int_{B(x,r)}f dm,\quad \text{as $k\rightarrow \infty$},
    \end{equation}
    which implies that $\bar{(f_k)}_r$ converges to $\bar{f}_r$ pointwise and thus $\bar{(f_k)}_r$ converges to $\bar{f}$ in probability.
    For the third term of right-hand side of \eqref{eq:RK_embed_1}, applying the Lebesgue differentiation theorem, it follows that $\bar{f}_r$ converges to $f$ $m$-a.e. on $X$ as $r\searrow 0$.
    Together with all arguments above, taking $k\rightarrow \infty$ first and then taking $r\searrow 0$ on both side of \eqref{eq:RK_embed_1}, it follows that
    \begin{equation}
        \lim_{k\rightarrow \infty}m\left(\left| f_k(x)-f(x)\right|>\varepsilon\right)
        =0.
    \end{equation}
    Then by \cite[Lemma 8.2]{hajlasz2000sobolev}, we obtain that $f_k\rightarrow f$ strongly in $L^q$ for all $q\in [1,2^*)$.

\end{proof}

\end{appendix}

\bibliographystyle{plainnat}
\bibliography{biblio}
\end{document}